\theoremstyle{definition}
\newtheorem{question}{Question}
\newcommand\numberthis{\addtocounter{equation}{1}\tag{\theequation}}
\newcommand{\bbR}{\mathbb{R}}
\newcommand{\bbS}{\mathbb{S}}
\newcommand{\II}{\mathbb{I}}
\newcommand{\Gcalw}{\mathcal{G}_w}
\newcommand{\Mcal}{\mathcal{M}}
\newcommand{\dgh}{d_{\operatorname{GH}}}	
\newtheorem{cor}{Corollary}[section]
\newtheorem{claim}{Claim}
\newtheorem{lem}[cor]{Lemma}
\newtheorem{prop}[cor]{Proposition}
\newtheorem{thm}{Theorem}
\theoremstyle{remark}
\newtheorem{defn}[cor]{Definition}
\newtheorem{ex}[cor]{Example}
\newtheorem{rmk}[cor]{Remark}
\newcommand{\EE}{\mathbb{E}}
\newcommand{\dis}{\operatorname{dis}}	
\newcommand{\dispq}{\operatorname{dis}_{p,q}}
\newcommand{\disprimepq}{\operatorname{dis}_{p',q}}
\newcommand{\disprimepprimeq}{\operatorname{dis}_{p',q'}}
\newcommand{\disft}{\operatorname{dis}_{4,2}}
\newcommand{\Sp}{\mathbb{S}}
\newcommand{\Spe}{\mathbb{S}_E}
\newcommand{\Spg}{\mathbb{S}_G}
\newcommand{\Spn}{\mathbb{S}^n}
\newcommand{\diam}{\operatorname{diam}}
\newcommand{\diamp}{\operatorname{diam}_p}
\newcommand{\supp}{\mathrm{supp}}
\newcommand{\dwrr}{{d^{\mathbb{R}}_{{\operatorname{W}}r}}}
\newcommand{\dwrx}{{d^{X}_{{\operatorname{W}}r}}}
\newcommand{\dgwpq}{d_{{\operatorname{GW}}p,q}}
\newcommand{\dgwppqq}{d_{{\operatorname{GW}}p',q'}}
\newcommand{\dgwpone}{d_{{\operatorname{GW}}p,1}}
\newcommand{\dgwft}{d_{{\operatorname{GW}}4,2}}
\newcommand{\dgwp}{d_{{\operatorname{GW}}p}}
\newcommand{\dgwpinf}{d_{{\operatorname{GW}}p,\infty}}
\newcommand{\lbpq}{\operatorname{LB}_{p,q}}
\newcommand{\slbpq}{\operatorname{SLB}_{p,q}}
\newcommand{\slbft}{\operatorname{SLB}_{4,2}}
\newcommand{\tlbpq}{\operatorname{TLB}_{p,q}}
\newcommand{\dlbft}{\operatorname{DLB}_{4,2}}
\newcommand{\dlbpq}{\operatorname{DLB}_{p,q}}
\newcommand{\dlbpp}{\operatorname{DLB}_{p,p}}
\newcommand{\dlbpqmin}{\operatorname{DLB}_{p,p\wedge q}}
\newcommand{\gddx}{dH_{X}}
\newcommand{\lddx}{dh_{X}}
\begin{document}

\title{The Gromov-Wasserstein distance between spheres}

\author[S. Arya]{Shreya Arya}
\address{Shreya Arya,
    Department of Mathematics,
    Duke University
}
\email{shreya.arya@duke.edu}

\author[Arnab Auddy]{Arnab Auddy}
\address{Arnab Auddy,
    Department of Statistics,
    The Ohio State University
}
\email{auddy.1@osu.edu}

\author[R.A.Clark]{Ranthony A. Clark}
\address{Ranthony A.~Clark,
    Department of Mathematics,
    Duke University
}
\email{ranthony.clark@duke.edu}

\author[S. Lim]{Sunhyuk Lim}
\address{Sunhyuk Lim,
    Department of Mathematics,
    Sungkyunkwan University
}
\email{lsh3109@skku.edu}

\author[F. M{\'e}moli]{Facundo M{\'e}moli}
\address{Facundo M{\'e}moli,
    Department of Mathematics,
    The Ohio State University
}
\email{facundo.memoli@gmail.com}

\author[D. Packer]{Daniel Packer}
\address{Daniel Packer, 
    Department of Mathematics,
    The Ohio State University
}
\email{packer.61@osu.edu}

\keywords{Gromov-Wasserstein distances, metric geometry, metric-measure spaces, optimal transport}
\subjclass[2010]{Primary 53C23, 54E35, 60D05}

\begin{abstract} 
The Gromov-Wasserstein distance -- a generalization of the usual Wasserstein distance --  permits comparing probability measures defined on possibly different metric spaces. Recently, this notion of distance has found several applications in Data Science and in Machine Learning.  

With the goal of aiding  both the interpretability of dissimilarity measures computed through the Gromov-Wasserstein distance and in the assessment of the approximation quality of computational techniques designed to estimate the Gromov-Wasserstein distance,  in this paper, we determine the precise value of a certain variant of the Gromov-Wasserstein distance between unit spheres of different dimensions. 

Indeed, we consider a two-parameter family $\{\dgwpq\}_{p,q=1}^{\infty}$ of Gromov-Wasserstein distances between metric measure spaces.  By exploiting a suitable interaction between specific values of the parameters $p$ and $q$ and the metric of the underlying spaces, we determine the exact value of the distance $\dgwft$ between all pairs of unit spheres of different dimension endowed with their Euclidean distance and their uniform measure.
\end{abstract}

\maketitle

\tableofcontents

\section{Introduction}
Shape comparison ideas are utilized in a variety of fields with a wide range of application domains ranging from phylogenetics \cite{rohlf1990morphometrics, klingenberg1998geometric, collyer2015method}, medicine \cite{scher2007hippocampal}, neuroscience \cite{miller2004computational,wang2007large}, oral biology \cite{robinson2002impact}, language structure \cite{alvarez2018gromov}, social and biological networks \cite{chowdhury2019gromov, Hendrikson2016usingGD}, to political science \cite{fan2015spatiotemporal, kaufman2021measure} and computer vision \cite{leordeanu2005spectral, Rubner2000earth}. Many context specific tools have been developed to study the diverse set of problems which appear in these domains. Classical approaches such as statistical landmark analysis \cite{bookstein1977study} turn physical shapes into sequences of vectors, allowing for the rotation-dilation based approach of Procrustes Analysis (see \cite{dryden2016statistical,goodall1991procrustes,monteiro2002geometric}).  On the other hand, one can also understand a shape from the perspective of metric geometry, where the essence of a shape is captured by its pairwise interpoint distances \cite{memoli2005theoretical, memoli2007use}. Then in order to compare two shapes, i.e., in order to quantify their failure to be isometric, we compare their metric information directly. The Gromov-Hausdorff distance (see \cite{gromov1999metric,burago2022course}),  $\dgh$, provides a framework to compare distinct (compact) metric spaces $X$ and $Y$, where 
\[
\dgh(X,Y) :=\frac{1}{2} \inf_{R\in\mathcal{R}(X,Y)}\dis(R) 
\]
where $\mathcal{R}(X,Y)$ denotes the collection of all correspondences between $X$ and $Y$, that is, all subsets $R\subseteq X\times Y$ such that the canonical projections of $R$ onto the first and second coordinates satisfy $\pi_1(R)=X$ and $\pi_2(R)=Y$, and where 
$$\dis(R):=\sup_{(x,y),(x',y')\in R}\big|d_X(x,x')-d_Y(y,y')\big|$$ is the \emph{distortion} of $R$. The Gromov-Hausdorff distance has been considered  in the context of shape comparison and shape classification problems \cite{memoli2005theoretical}. However, it does not account for the distributional properties of a given data sample. The Gromov-Wasserstein distance \cite{memoli2011gromov} offers a robust alternative by viewing the shapes as metric measure spaces: triples $(X,d_X,\mu_X)$ where $d_X$ is the metric on $X$ and $\mu_X$ is a fully Borel supported probability measure on $X$.

It is natural to consider metric measure spaces in the context of shape and data comparison, since they allow us to associate to each point in our shape a weight that represents its relative importance within the dataset. The Gromov-Wasserstein distance provides a solution to the problem of finding the ``best'' way to align two shapes equipped with probability measures, where the best alignment is found by making use of the notion of coupling, a cognate of the notion of correspondence which is ubiquitous in the Kantorovich formulation of optimal transport \cite{villani2021topics}. Given measure spaces $(X, \mu_X), \ (Y, \mu_Y)$, a coupling between $X$ and $Y$ is a measure $\gamma$ on the product space $X \times Y$ whose marginals over $X$ and $Y$ are $\mu_X$ and $\mu_Y$ respectively.\footnote{More precisely, the pushforwards of $\gamma$ under $\pi_1$ and $\pi_2$ satisfy $(\pi_1)_\#\gamma = \mu_X$ and $(\pi_2)_\#\gamma = \mu_Y$.} 
We denote the space of all such measures by $\mathcal{M} (\mu_X, \mu_Y)$.
Intuitively, couplings align points in $X$ to those in $Y$. The distortion of a coupling provides insight to how well a given coupling interacts with the underlying metric structures of $X$ and $Y$ in order to preserve distances. For $p\in[1,\infty)$, the $p$-distortion induced by a coupling $\gamma \in \mathcal{M} (\mu_X, \mu_Y)$ is defined as:
\[
\mathrm{dis}_p (\gamma) := \left( \int_{X \times Y} \int_{X \times Y} \big| d_X(x, x') - d_Y(y, y') \big|^p \gamma (dx \times dy) \ \gamma (dx' \times dy') \right)^{1/p}.
\]
This distortion is then minimized (see \cite{memoli2011gromov}) over all possible couplings to define the $p$-Gromov-Wasserstein distance between $X$ and $Y$:
\[
\dgwp (X, Y) := \frac{1}{2} \inf_{\gamma \in \mathcal{M} (\mu_X, \mu_Y)} \mathrm{dis}_p (\gamma).
\]

In this work, we consider a two parameter family $\dgwpq$ (for $p,q\in [1,\infty]$) of Gromov-Wasserstein distances. In contrast to the $\dgwp$ distance above, we consider the $(p,q)$-distortion of a coupling $\gamma\in\mathcal{M}(\mu_X,\mu_Y)$ defined as:
\begin{equation}\label{intro: eqn}
\mathrm{dis}_{p, q} (\gamma) := \left( \int_{X \times Y} \int_{X \times Y} \big| d^q_X(x, x') - d^q_Y(y, y') \big|^{p / q} \gamma 
(dx \times dy) \ \gamma (dx' \times dy') \right)^{1/p},
\end{equation}
which is then minimized over all possible couplings to define the $(p,q)$-Gromov-Wasserstein distance between $X$ and $Y$:
$$
\dgwpq(X,Y):=
\frac{1}{2} \inf_{\gamma \in \mathcal{M} (\mu_X, \mu_Y)} \mathrm{dis}_{p,q} (\gamma).
$$
The $(p,q)$-Gromov-Wasserstein distance $\dgwpq$ interpolates between two previously studied versions of the Gromov-Wasserstein distance: for $q=1$, it reduces to the $\dgwp$ distance of \cite{memoli2011gromov}, while for $q=\infty$, it coincides with the $p$-ultrametric Gromov-Wasserstein distance defined in \cite{memoli2021ultrametric}.

This  formulation exhibiting one additional parameter  makes  Gromov-Wasserstein distances more amenable to analysis. Raising the distances to the $q$-th power allows for more explicit control of the difference in distances by emphasizing structural properties of $d_X$ and $d_Y$.  In this sense, of particular interest is the case of the Euclidean metric, with $q=2$, where taking squares of distances allows one to move from norms to inner products.  A construction related to the  case of $q=2$ and $p=4$ was considered in \cite{salmona2021gromov} to determine the value the Gromov-Wasserstein distances between arbitrary Gaussian distributions. A similar $(p,q)$-distortion was considered by Sturm in \cite{sturm-ss} with the distinction that the difference between the $q$-th powers of the distances is raised to the $p$-th power (as opposed to the $p/q$-th power). This implies that the $\dgwpq$ distance we consider has absolute homogeneity, while Sturm's version does not (cf. Remark \ref{rmk:relation}).

We now connect the $(p,q)$-Gromov-Wasserstein to some existing  computational approaches. Note that the computation of $\dgwpq(X,Y)$ involves optimizing the $(p,q)$-distortion over the set of all possible couplings $\gamma\in\mathcal{M}(\mu_X,\mu_Y)$. This reduces to a non-convex quadratic optimization problem \cite[Section 7]{memoli2011gromov}, which is in general NP-hard (see, e.g., \cite{peyre2016gromov, scetbon2022linear}). Nonetheless, there exist numerous computational approaches to find approximate solutions to the above problem or its variants: see \cite{peyre2016gromov, chowdhury2019gromov,chowdhury2020gromov,alvarez2018gromov,titouan2019optimal,titouan2019sliced,vayer2020fused} and references therein. Perhaps the most standard of these approaches is the use of gradient descent algorithms. In the absence of an algorithm that provably finds the global optima of this problem, practitioners often depend on heuristic initializations and find local optima through these gradient based methods. It is hence essential to assess the (sub)-optimality of these local optima. 

A particularly popular approach of assessing sub-optimality is to consider lower bounds of $\dgwp$ by using `signatures' or invariants of metric measure spaces (see \cite{memoli2011gromov,memoli2022distance}). We exhibit three invariant based lower bounds for the $(p,q)$-Gromov-Wasserstein distance between two arbitrary metric measure spaces. The three bounds are constructed from signatures related to particular invariants of and distributions on metric measure spaces. For example, the $(p,q)$-Second Lower Bound, denoted $\slbpq$, is a counterpart to the Second Lower Bound for $\dgwp$ from \cite{memoli2011gromov}, and utilizes the global distribution of distances between points of the two metric measure spaces. In  Proposition \ref{thm:hierarchy} we also establish a hierarchy of poly-time computable lower bounds for this distance in the spirit of \cite{memoli2011gromov,chowdhury2019gromov,memoli2022distance, memoli2021ultrametric}. 

As discussed above, these lower bounds aspire to be useful for determining whether the output of an algorithm is sufficiently close to the global optimum. Even though the exact values of the $\dgwpq$ Gromov Wasserstein distances may not always be  available, in practice one can compare the objective function at the computed (local) optima against our lower bounds to evaluate the performance of their algorithm. In the same spirit, lower bounds are useful in accelerating shape classification, where knowing the relative strengths of the lower bounds allows one to progressively filter out comparisons of the most distinct examples by comparing examples with successively stronger lower bounds from the hierarchy (see \cite{memoli2011gromov}).

We further derive lower and upper bounds on the $\dgwpq(X,Y)$ distance in the case where $p=4$, $q=2$ and the metric measure spaces $X,Y$ are spheres equipped with geodesic or Euclidean distances, and uniform measures. Spheres, being canonical spaces, are a natural starting point for understanding Gromov-Wasserstein distances. These can provide reasonable benchmarks for assessing the quality of a given algorithm for estimating Gromov-Wasserstein and related distances (see, e.g., \cite{lim2023gromov}). 

\begin{figure}
\includegraphics[width = \linewidth]{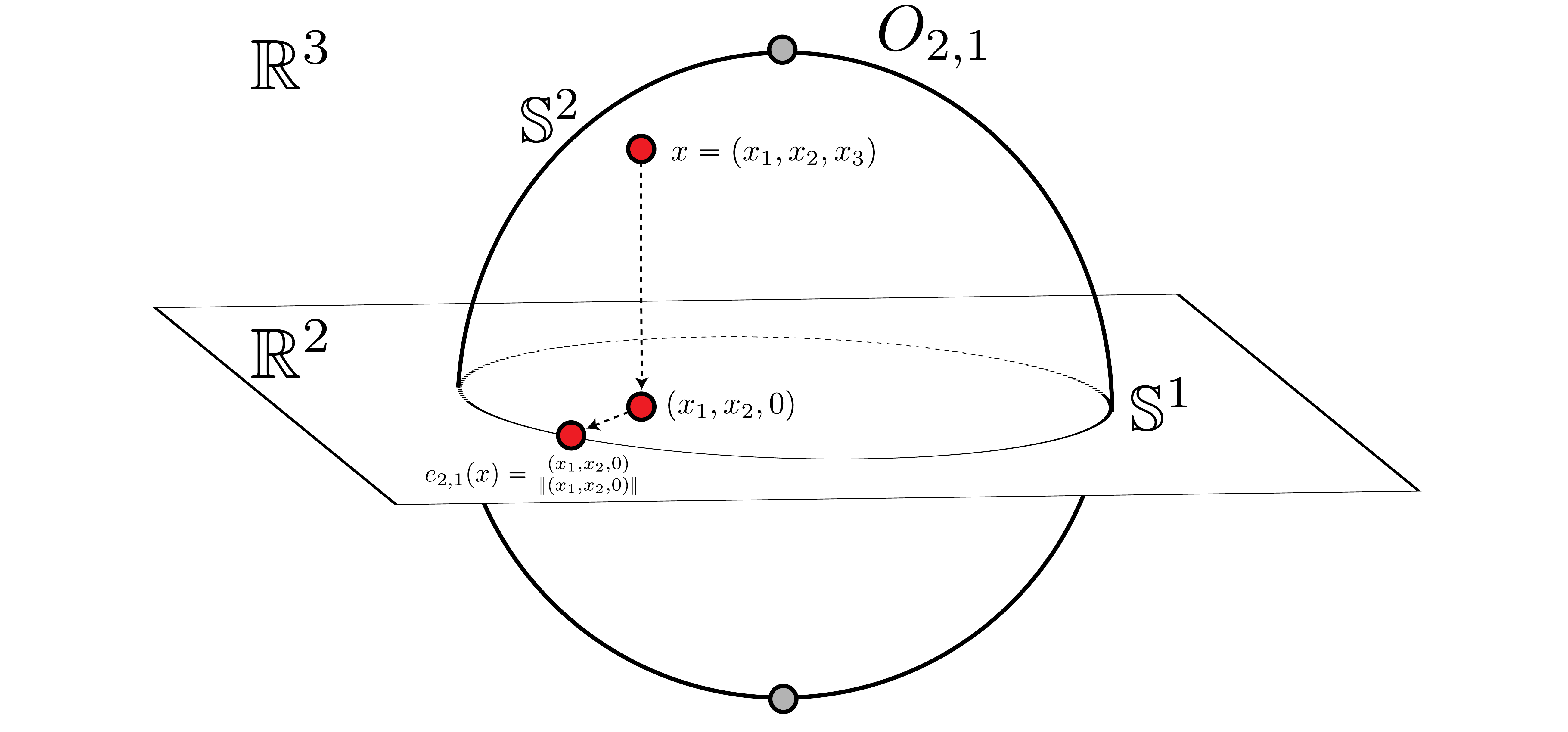}
\caption{A depiction of the equatorial map $e_{2,1}:\Sp^2\backslash O_{2,1}\to \Sp^1$. The measure zero set $O_{2,1}$ consists of the north and south poles of $\Sp^2$. See Definition \ref{def:eq-map}.} \label{fig:prj}
\end{figure}

Furthermore, with the goal of providing such benchmarks, in  Theorem \ref{thm:equ-opti} we determine the \emph{exact} value of $\dgwft(\Spe^m,\Spe^n)$, i.e., the (4,2)-Gromov-Wasserstein distance between $m$ and $n$ dimensional unit spheres equipped with the Euclidean metric and uniform measures. The optimal coupling is induced by a map $\gamma_{m,n}$ that projects the first $m$ coordinates from the $n$-dimensional sphere $(m<n)$ to the $m$-dimensional unit sphere. Inspired by the case of $m=1$ and $n=2$, we call this the \emph{equatorial} map; see Definition \ref{def:eq-map} and Figure \ref{fig:prj}.

\begin{restatable}[Main Theorem]{thm}{mainthm}
\label{thm:equ-opti}
The equatorial coupling $\gamma_{m,n}$ is an optimal coupling for $\dgwft(\Spe^m,\Spe^n)$. In particular, for $n\ge m\ge 0$, 
$$
\dgwft(\Spe^m,\Spe^n)
=
\dfrac{1}{2}
\dis_{4,2}(\gamma_{m,n},\Spe^m,\Spe^n)
=
\dfrac{1}{\sqrt{2}}
\left[    
    \dfrac{1}{m+1}+\dfrac{1}{n+1}
-\dfrac{2}{m+1}
\left(    
\dfrac{\Gamma\left(\tfrac{m+2}{2}\right)
\Gamma\left(\tfrac{n+1}{2}\right)}{
\Gamma\left(\tfrac{m+1}{2}\right)
\Gamma\left(\tfrac{n+2}{2}\right)
}\right)^2
\right]^{1/4}.
$$
\end{restatable}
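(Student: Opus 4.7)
I split the claim into an upper bound by direct computation on $\gamma_{m,n}$ and a matching lower bound valid for every coupling; the latter is the main difficulty.

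The key simplification comes from $q=2$ together with the Euclidean structure: since $d^2(x,x')=2-2\langle x,x'\rangle$ on both spheres, expanding $(d_X^2-d_Y^2)^2$ gives
$$\dis_{4,2}(\gamma)^4 = 4\bigl[C_X + C_Y - 2\,I(\gamma)\bigr],$$
with $C_X = \iint\langle x,x'\rangle^2 d\mu_X d\mu_X = 1/(m+1)$ by rotational symmetry of the uniform measure on $\Sp^m$ (likewise $C_Y=1/(n+1)$), and the only $\gamma$-dependent term
$$I(\gamma) := \iint \langle x,x'\rangle\langle y,y'\rangle\, d\gamma(x,y)\,d\gamma(x',y') = \|M(\gamma)\|_F^2,\qquad M(\gamma):=\int xy^\top d\gamma.$$
Hence computing $\dgwft(\Spe^m,\Spe^n)$ reduces to maximising the Frobenius norm of the cross-moment matrix $M(\gamma)\in\mathbb{R}^{(n+1)\times(m+1)}$ over couplings.

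For $\gamma_{m,n}$, which couples $\Spe^m$ and $\Spe^n$ via the deterministic equatorial map sending $x\in\Sp^n$ to $(x_1,\dots,x_{m+1})/\sqrt Z\in\Sp^m$ with $Z:=\sum_{i=1}^{m+1} x_i^2$, the joint invariances (permutations and sign flips of the first $m+1$ coordinates acting diagonally on source and target, plus sign flips of the remaining $n-m$ source coordinates) force $M_{ij}=\alpha\,\delta_{ij}$ for $i,j\le m+1$ and zero otherwise. Summing $\alpha = \mathbb{E}[x_i^2/\sqrt Z]$ over $i=1,\dots,m+1$ yields $(m+1)\alpha = \mathbb{E}[\sqrt Z]$, so $\|M(\gamma_{m,n})\|_F^2 = (\mathbb{E}\sqrt Z)^2/(m+1)$. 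Since $Z\sim\mathrm{Beta}(\tfrac{m+1}{2},\tfrac{n-m}{2})$ when $x$ is uniform on $\Sp^n$, the Beta integral gives $\mathbb{E}[\sqrt Z] = \Gamma(\tfrac{m+2}{2})\Gamma(\tfrac{n+1}{2})/[\Gamma(\tfrac{m+1}{2})\Gamma(\tfrac{n+2}{2})]$, and substituting back reproduces the theorem's formula as an upper bound on $\dgwft$.

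It remains to show $\|M(\gamma)\|_F^2 \le (\mathbb{E}\sqrt Z)^2/(m+1)$ for every coupling --- the theorem's main content. Disintegrating $\gamma$ through a Markov kernel $K$ from $\Sp^n$ to $\Sp^m$ and letting $f(x):=\int y\,K(x,dy)$ denote the conditional mean, one has $\|f(x)\|\le 1$ pointwise and, via Parseval applied with the orthonormal basis $\{\sqrt{n+1}\,x_i\}_{i=1}^{n+1}$ of the degree-one spherical-harmonic subspace $H_1\subset L^2(\mu_{\Sp^n})$,
$$\|M(\gamma)\|_F^2 = \frac{1}{n+1}\int_{\Sp^n} \|P_1 f(x)\|^2\, d\mu_{\Sp^n}(x),$$
where $P_1$ is coordinate-wise projection onto $H_1$. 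The naive pointwise bound $\|P_1 f\|\le\|f\|\le 1$ only yields $\|M\|_F^2\le 1/(n+1)$, strictly weaker than the claim for $n>m$ (Jensen gives $(\mathbb{E}\sqrt Z)^2 < \mathbb{E}[Z] = (m+1)/(n+1)$ unless $n=m$). Any sharp argument must therefore combine the pointwise constraint $\|f\|\le 1$ with the global constraint $\int K(x,\cdot)d\mu_{\Sp^n} = \mu_{\Sp^m}$, and this simultaneous use is the core obstacle. My preferred route is a symmetrisation and rearrangement argument exploiting the $O(n+1)\times O(m+1)$-action, together with Jensen applied to $t\mapsto\sqrt t$, to reduce to radially symmetric kernels and recover the $\mathbb{E}\sqrt Z$ factor. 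A potentially cleaner alternative is to invoke the distribution-based lower bound $\slbpq$ (or its refinement $\dlbpq$) from Proposition~\ref{thm:hierarchy}, evaluate it in closed form on the uniform spheres, and show it matches the equatorial value, thereby closing the sandwich directly.
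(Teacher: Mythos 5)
Your reduction of $\dis_{4,2}^4(\gamma)$ to the Frobenius norm $\|M(\gamma)\|_F^2$ and your evaluation of the equatorial coupling via the Beta law of $Z=\|y_A\|^2$ are correct and match the paper's computation exactly. The problem is that you never prove the matching upper bound $\|M(\gamma)\|_F^2\le(\mathbb{E}\sqrt Z)^2/(m+1)$ for arbitrary $\gamma$ --- which you rightly call the theorem's main content --- you only gesture at two possible routes, and one of them is provably closed. Closing the sandwich via $\slbpq$ cannot work: the paper's Table~\ref{table:GW euc} records $\tfrac12\slbft(\Spe^1,\Spe^2)\approx 0.276$ while $\dgwft(\Spe^1,\Spe^2)\approx 0.482$, so the second lower bound sits strictly below the exact value and does not match the equatorial upper bound. (Also, $\dlbpq$ is \emph{weaker} than $\slbpq$ by Proposition~\ref{thm:hierarchy}, not a refinement of it.)

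The other route, symmetrisation plus Jensen on $\sqrt t$, is in the right spirit but is not an argument, and your $P_1$-based Parseval bound is structurally unable to reach the sharp constant because it treats the $m+1$ target coordinates independently. The paper's actual proof has two ingredients you did not supply. First, a change-of-coordinates reduction (Lemma~\ref{lem:change-coords}): using an SVD of $M_\gamma$ together with the rotational invariance of $\mu_m,\mu_n$, one may replace the maximisation of $\|M_\gamma\|_F^2$ by that of the diagonal functional $D(\gamma)=\sum_{k=1}^{m+1}\bigl[\int x_k y_k\,d\gamma\bigr]^2$. Second, a disintegration of $\gamma$ along $t=\|y_A\|$ followed by \emph{two} applications of Cauchy--Schwarz: one inside each slice $A_t=(t\cdot\Sp^m)\times(\sqrt{1-t^2}\cdot\Sp^{n-m-1})$, exploiting that the $\Sp^n$-marginal of the disintegrated measure on $A_t$ is forced by symmetry to be the product of uniform measures on the two factors; and then a second one jointly in $k$ and in $(t,t')$ after writing the squared integral as a product over independent copies. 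It is this second Cauchy--Schwarz --- passing $\sum_k$ inside the square --- that simultaneously uses the pointwise constraint $\|f(x)\|\le1$ and the marginal constraint on $K$, which you correctly diagnosed as the crux, and collapses the bound to $\tfrac{1}{m+1}\bigl(\int_{\Sp^n}\|y_A\|\,\mu_n(dy)\bigr)^2$. Without this concrete step your proposal is a correct setup and an accurate diagnosis of the difficulty, but not a proof.
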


Note that when $n=m+2$ the expression given in the theorem reduces to 
\[
\dgwft(\Sp^{m}_E,\Sp^{m+2}_E)
=
\dfrac{1}{2^{1/4}}
\left[
\dfrac{1}{(m+1)(m+3)}
+
\dfrac{1}{(m+2)^2(m+3)}
\right]^{\frac{1}{4}}.
\]
In particular, this means that $\dgwft(\Sp^{m}_E,\Sp^{m+2}_E)=O(m^{-1/2})$ as $m\to \infty$. These asymptotics do not depend on the fact that $n=m+2$. Indeed, as explained in Remark \ref{rem:asympt-n-m}, $\dgwft(\Sp^{m}_E,\Sp^{n}_E)=O(m^{-1/2})$ when $n=m+k_0$ for a fixed $k_0$. A different interesting regime  (see Remark~\ref{rem:asympt-fixed-m}) is the one when $n=m+k$ for fixed $m$ but $k\to\infty$; in that case we have
$$
\dgwft(\Spe^m,\Spe^{m+k})
=
\dfrac{1}{\sqrt{2}}\left[\dfrac{1}{m+1}\right]^{1/4}
+O(k^{-1/4})
$$
as $k\to\infty$.  See Section \ref{subsec:exact} for additional related results.

\subsubsection*{Ideas behind the proof of Theorem \ref{thm:equ-opti} and related work.} \label{sec:rel-wk} The proof proceeds by first observing that minimizing the $(4,2)$-distortion over all couplings $\gamma$ between the respective uniform measures on $\Spe^m$ and $\Spe^n$ is equivalent to \emph{maximizing} the functional $$J(\gamma):= \Bigg\|
\underbrace{
\int_{\Sp^m \times \Sp^n} 
xy^\top
\ \gamma(dx \times dy)}_{M_\gamma}
\Bigg\|_{\rm F}^2.$$ Then, via  the singular value decomposition of the matrix $M_\gamma$, we
identify a change of coordinates (see Section \ref{sec:change-of-coords}) which allows us to argue that one can restrict attention to couplings $\tilde{\gamma}$ for which the matrix $M_{\tilde{\gamma}}$ has the form $\begin{pmatrix}
\Lambda_\gamma &
\mathbf{0}_{(m+1)\times (n-m)}
\end{pmatrix}$ where $\Lambda_\gamma$ is a $m\times m$ diagonal matrix containing the singular values of $M_\gamma$ --- a simplification which in turn implies that one can equivalently restrict attention to maximizing the (simpler) functional $$D(\gamma) := 
\sum_{k=1}^{m+1}
\left[\int_{\Sp^m \times \Sp^n}
x_k y_k \
\gamma (dx \times dy) \right]^2.$$ The final step of the proof can be described as an intricate application of the Cauchy-Schwarz inequality to a suitably disintegrated expression of the functional $D$. 

To better describe some of the main ideas behind the proof of Theorem \ref{thm:equ-opti}  without these intricacies, we state and prove a subsidiary result (Proposition \ref{prop:gaussian}, which is particular case of \cite[Proposition 4.1]{salmona2021gromov}) about maximizing the functional $J$ in the case when $\gamma$ ranges over all couplings between standard Gaussian measures. We then show how to suitable alter the proof of Proposition \ref{prop:gaussian} in order to establish Theorem \ref{thm:equ-opti}.

 \label{page:change-coords}
The functional $J(\gamma)$ appears naturally when considering certain inner product based variants of the Gromov-Wasserstein distance \cite{vayer2018optimal,salmona2021gromov,dumont2022existence}. The change of coordinates step appeared in Vayer's PhD thesis \cite[Chapter 4]{vayer2020contribution}. It was also used in \cite[Theorem 3.2]{dumont2022existence} when studying the existence of Monge maps for the $J$ functional defined above.

There are results in the literature providing a precise description of optimal Monge maps in the context of Gromov-Wasserstein distances but none of those seems to be applicable in our setting:

\begin{itemize}
\item In \cite[Section 9.4]{sturm-ss} Sturm provides a characterization result for optimal couplings under the assumption that both measures are rotationally invariant and absolutely continuous w.r.t. Lebesgue measure \emph{in the same ambient Euclidean space}. The setting of Theorem \ref{thm:equ-opti} does not fit into the one considered by Sturm: the measures considered therein are not absolutely continuous. One might nevertheless contemplate applying Sturm's result to suitable smoothings of the uniform measures on the respective spheres. However, it does not seem possible to guarantee the smoothed measure resulting from the lower dimensional sphere to be rotationally invariant.
\item In \cite[Theorem 3.2]{dumont2022existence} Dumont et al. give a precise description of optimal Monge maps for the $J$ functional under the assumption that one of the measures is absolutely continuous w.r.t. Lebesgue measure.\footnote{Their results generalize \cite[Theorem 4.2.3]{vayer2020contribution}.} Therefore their results are also not applicable to our setting.
\item  In \cite[Proposition 4.1]{salmona2021gromov} Salmona et al. contend with the case of Gaussian measures and find the precise structure of an optimal couplings that, in the case of standard Gaussians, boils down to the  a coupling between Gaussians with similar structure to the equatorial coupling. As far as we know, it does not seem possible to apply their results in our setting (i.e. in order to establish Theorem \ref{thm:equ-opti}); see Question \ref{q:gauss} and also Section \ref{sec:gaussian}. 
\end{itemize}

\subsubsection*{Other related work}
Finally, this project is related to a recent effort to compute the precise value of the (closely related) Gromov-Hausdorff distance between spheres \cite{lim2023gromov,adams2022gromov}. In \cite{lim2023gromov}, the authors provide nontrivial upper and lower bounds for the Gromov-Hausdorff distance $\dgh(\Sp^m,\Sp^n)$ between spheres $\Sp^m$ and $\Sp^n$ (endowed with the geodesic metric) for $0 \leq m < n \leq \infty$. Some of these lower bounds were motivated by topological ideas related to a quantitative version of the Borsuk-Ulam theorem \cite{dubins1981equidiscontinuity}. Through explicit constructions of (optimal) correspondences it was proved that their lower bounds were tight in the cases of $\dgh(\Sp^0,\Sp^n)$, $\dgh(\Sp^m,\Sp^\infty)$, $\dgh(\Sp^1,\Sp^2)$, $\dgh(\Sp^1,\Sp^3)$, and $\dgh(\Sp^2,\Sp^3)$. Interestingly, the optimal correspondences achieving these distances are very different in nature from the optimal coupling achieving the exact value of $\dgwft(\Sp^m_E,\Sp^n_E)$, in the sense that these optimal correspondence are induced by highly discontinuous maps whereas the optimal coupling described above is induced by certain natural projection maps (which are continuous).

\subsection*{Acknowledgements.} This project was part of the 2022 AMS-MRC (Math Research Communities) on ``Data Science at the Crossroads of Analysis, Geometry, and Topology". S.A. and A.A. acknowledge support from NSF through grant DMS-1916439. R.A.C.E. acknowledges support from NSF MPS Ascending Fellowship Award \#2138110. F.M. was supported by the NSF through grants 
DMS-1723003, IIS-1901360, CCF-1740761, DMS-1547357, CCF-2310412 and by the BSF under grant  2020124.

\subsection*{Organization of the Paper.}
The rest of the paper is organized as follows. 

Section~\ref{sec:notation} introduces notation and terminology that will be used throughout the paper. 

Section \ref{sec:sphr-lb} is the central section of our paper. There we introduce the requisite background and supporting results used to prove Theorem \ref{thm:equ-opti}, which we also do therein. 

In Section \ref{sec:gen-lb} we recall several invariants of metric measure spaces and use them to prove lower bounds for the $(p,q)$-Gromov-Wasserstein distance.  In Section \ref{sec:lbs-spheres} we evaluate those lower bounds for spheres with their Euclidean and geodesic distances. For the former case we compare these lower bounds against the exact value provided by Theorem \ref{thm:equ-opti}.

Section \ref{sec:exp} contains a description of some experiments illustrating the result from Theorem \ref{thm:equ-opti}. In particular, our experiments provide an computational perspective and an indication of
the performance of discrete Gromov-Wasserstein solvers from \cite{flamary2021pot} and \cite{cuturi2022optimal} when estimating the Gromov-Wasserstein distance between spheres.

Section \ref{sec:disc} provides a discussion and contains several questions (Questions \ref{q:comp}, \ref{q:eq-map-opt}, \ref{q:gauss}, and \ref{q:monge}) that might suggest further research directions.

To enhance readability, the proofs of several results and other details are relegated to Appendices ~\ref{sec:proofs}, \ref{app:calc}, and \ref{app:exps}. 

\subsection{Notation and Terminology}\label{sec:notation}

We now define the main concepts used in the paper. 

Given a measurable space $(X,\Sigma_X)$, we denote the set of all probability measures on $X$ by $\mathcal{P}(X)$.

\begin{defn}
Let $(X,\Sigma_X,\mu_X)$ and $(Y,\Sigma_Y,\mu_Y)$ be measure spaces such that $\mu_X\in\mathcal{P}(X)$ and $\mu_Y\in\mathcal{P}(Y)$. A \emph{coupling} between $\mu_X$ and $\mu_Y$ is a (probability) measure $\gamma$ on the product space $X \times Y$ such that 
\[
\gamma(A \times Y) = \mu_X(A) \hspace{5mm} \text{and} \hspace{5mm} \gamma(X \times B) = \mu_Y(B)
\]
for all $A \in \Sigma_X$ and $B \in \Sigma_Y$.  We denote the set of all couplings between $\mu_X$ and $\mu_Y$ by $\mathcal{M}(\mu_X,\mu_Y)$. Note that $\mathcal{M}(\mu_X,\mu_Y)$ is never empty as it  always contains the product measure $\mu_X \otimes \mu_Y$.
\end{defn}

Let $(X,\Sigma_X, \mu_X)$ and $(Y,\Sigma_Y,\mu_Y)$ be measure spaces such that $Y=\{y_0\}$, $\mu_X\in \mathcal{P}(X)$ and $\mu_Y\in\mathcal{P}(Y)$. Then, $\mu_Y=\delta_{y_0}^Y$ and $\mathcal{M}(\mu_X,\mu_Y)$ contains exactly one coupling. That is, $\mathcal{M}(\mu_X,\mu_Y)=\{\mu_X \otimes \delta_{y_0}^Y\}$ where $\mu_Y=\delta_{y_0}^Y$ is a Dirac delta.  

Let $(X,\Sigma_X)$ and $(Y,\Sigma_Y)$ be measurable spaces and $T:X \to Y$ a measurable map. The \emph{pushforward} of a measure $\alpha$ on $(X,\Sigma_X)$ by $T$, denoted $T_\#\alpha$, is the measure on $Y$ given by
\[
T_\#\alpha(A) = \alpha(T^{-1}(A))
\]
for every $A \in \Sigma_Y$. We can then describe the set of all couplings between $\mu_X$ and $\mu_Y$ as 
\[
\mathcal{M}(\mu_X,\mu_Y)=\{ \gamma \in \mathcal{P}(X\times Y)\mid (\pi_X)_\#\gamma=\mu_X, (\pi_Y)_\#\gamma=\mu_Y \}
\]
where $\pi_X:X\times Y \to X$ and $\pi_Y:X \times Y \to Y$ are the canonical projections onto the first and second components respectively.

\begin{framed}Given a topological space $X$, unless indicated otherwise, we will assume all measures on $X$ to be Borel measures, and  will denote the Borel sigma algebra of $X$ by $\Sigma_X$. Furthermore, in this case, $\mathcal{P}(X)$ will denote the set of all Borel probability measures on $X$.
\end{framed}
\begin{defn} The \emph{support} of a Borel measure $\alpha$ on a topological space $X$  is the smallest closed subset $X_0 \subset X$ so that $\alpha(X \backslash X_0)=0,$ that is, for any $A \subset X$, if $A \cap X_0 = \emptyset$, then $\alpha(A)=0.$ We denote the support of $\alpha$ by $\mathrm{supp}[\alpha].$
\end{defn}

\begin{defn}
    Let $\alpha$ be a Borel probability measure on $\mathbb{R}$ and $r\in [1,\infty)$. Then the \emph{$r$-moment} of $\alpha$ is
\[
m_r(\alpha):= \left(\int_\mathbb{R} x^r \alpha(dx)\right)^{1/r}.
\] 
Now let $(X, d_X)$ be a metric space.
We define,
\[ \mathcal{P}_r (X) := \{ \mu \in \mathcal{P}(X) \ | \ d_X(x_0, \cdot)_\# \mu \text{ has finite $r$-moment for some $x_0 \in X$} \}.\]
In fact, the choice of $x_0$ is immaterial -- if the moment is finite for one reference point, it is finite for any reference point.
\end{defn}

\begin{defn} Let $(X,d_X)$ be a  metric space, $r\in[1,\infty]$, and $\alpha, \beta \in \mathcal{P}_r(X)$.\footnote{We define  $\mathcal{P}_\infty(X)$ as the set of those probability measures on $X$ with bounded support.} 
The \emph{$r$-Wasserstein}  distance on between $\alpha$ and $\beta$ is given by
\[
\dwrx(\alpha,\beta):= \inf_{\gamma \in \mathcal{M}(\alpha, \beta)} \bigg( \int_{X \times X} d_X^r(x,x') \ \gamma(dx \times dx') \bigg)^{1/r}
\]

\noindent for $1 \leq r < \infty$, and

\[
d^X_{\operatorname{W}\infty}(\alpha,\beta):= \inf_{\gamma \in \mathcal{M}(\alpha, \beta)} \sup_{(x,x') \in \supp[\gamma]} d_X(x,x').
\]
\end{defn}

\begin{ex}\label{ex:dWonR}
When $\alpha,\beta \in \mathcal{P}_r(\bbR)$, the $r$-Wasserstein distance on $\mathbb{R}$ (with the usual metric) can be explicitly computed as follows  (see, e.g., \cite{villani2021topics}):
\begin{equation}\label{eq:dw-R}
    \dwrr(\alpha,\beta)
=
\left(
\int_0^1 
|F_{\alpha}^{-1}(u)-F_{\beta}^{-1}(u)|^r \ du
\right)^{1/r}
\end{equation}
where $F_{\alpha}(t):= \alpha((-\infty,t])$ and $F_{\beta}(t):= \beta((-\infty,t])$ are the cumulative distributions of $\alpha$ and $\beta$, respectively, and their generalized inverses are defined as:
\begin{equation}\label{eq:def-ginv}
F_{\alpha}^{-1}(u) := \inf\{ t \in \mathbb{R} \mid F_{\alpha}(t) > u\}
\end{equation}
for $u\in[0,1]$.
\end{ex}

\begin{ex} \label{ex:wass-delta} 
For $r \in [1,\infty)$, the $r$-Wasserstein distance on the real line  between $\alpha\in \mathcal{P}_r(\bbR)$ and the Dirac delta $\delta_0$  equals the $r$-moment of $\alpha$: 
\[
\dwrr(\alpha,\delta_0) = \bigg(\int_{\bbR \times \bbR} |t-s|^r (\alpha \otimes \delta_0)(dt \times ds) \bigg)^{1/r} = \bigg(\int_{\bbR_+} t^r \alpha(dt)\bigg)^{1/r} = m_r(\alpha).
\]
\end{ex}

\begin{defn}
For each $q\in [1,\infty]$, we define $\Lambda_q:\bbR_+ \times \bbR_+ \to \bbR_+$ in the following way (cf. \cite{memoli2022metric}): 
\begin{align*}
    &\Lambda_q(a,b):=|a^q-b^q|^{\frac{1}{q}}\text{ if }q<\infty,\text{ and}\\
    &\Lambda_\infty(a,b):=\begin{cases}\max\{a,b\}&\text{if }a\neq b\\0&\text{if }a=b.\end{cases}
\end{align*}
\end{defn}

\begin{rmk}
Note that $\Lambda_1(a,b) = |a-b|$ for all $a,b\geq 0$. One of the the claims of the following proposition is that $\Lambda_q$ is a metric on $\bbR_+$ for each $q\geq 1$. 
\end{rmk}

\begin{prop}[{\cite[Lemma 2.2, Example 2.7, Proposition 2.11]{memoli2022metric}}]\label{prop:lmbdaq}
$\Lambda_q$ defines a metric on $\bbR_+$ for each $q\in[1,\infty]$, i.e., it is symmetric, non-negative, it satisfies $\Lambda_q(a,b)=0$ if and only if $a=b$, and it satisfies the triangle inequality:
$$\Lambda_q(a,b)\leq \Lambda_q(a,c)+\Lambda_q(c,b)\,\,\mbox{for all $a,b,c\geq 0$}.$$ 
Also, if $1\leq q\leq q'\leq\infty$, then $\Lambda_q\leq\Lambda_{q'}$.
\end{prop}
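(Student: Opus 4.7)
The plan is to verify the metric axioms for $\Lambda_q$ and then the monotonicity $\Lambda_q\leq\Lambda_{q'}$ by reducing each to a short elementary fact about power maps on $\bbR_+$. Symmetry and non-negativity are immediate from the definition, and the vanishing condition $\Lambda_q(a,b)=0\iff a=b$ follows because $t\mapsto t^q$ is a strictly increasing bijection of $\bbR_+$ for $q<\infty$ (and is built directly into the definition when $q=\infty$, using that $\max\{a,b\}>0$ whenever $a\neq b$ and $a,b\geq 0$). What remains is the triangle inequality for each $q$ and the monotonicity statement.

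For the triangle inequality with $q\in[1,\infty)$, I would make the substitution $u=a^q$, $v=b^q$, $w=c^q$, which reduces the claim to $|u-v|^{1/q}\leq|u-w|^{1/q}+|w-v|^{1/q}$ for $u,v,w\in\bbR_+$. This is the classical ``snowflake'' argument: combine the usual triangle inequality $|u-v|\leq|u-w|+|w-v|$ with the subadditivity bound $(s+t)^{1/q}\leq s^{1/q}+t^{1/q}$ for $s,t\geq 0$, which holds because $1/q\in(0,1]$ and $x\mapsto x^{1/q}$ is concave on $\bbR_+$ with value $0$ at $0$. For $q=\infty$ I would instead verify the stronger ultrametric inequality $\Lambda_\infty(a,b)\leq\max\{\Lambda_\infty(a,c),\Lambda_\infty(c,b)\}$ by direct case analysis: it is trivial if $a=b$, and otherwise the estimates $\max\{a,c\}\geq a$ and $\max\{c,b\}\geq b$ force at least one right-hand term to be at least $\max\{a,b\}=\Lambda_\infty(a,b)$.

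For the monotonicity, take $1\leq q\leq q'<\infty$, assume without loss of generality $a\geq b\geq 0$, and set $p:=q'/q\geq 1$. Raising the desired inequality $\Lambda_q(a,b)\leq\Lambda_{q'}(a,b)$ to the $q'$-th power reduces it to $(A-B)^p\leq A^p-B^p$ with $A:=a^q$ and $B:=b^q$. Writing $A=(A-B)+B$ turns this into superadditivity of $x\mapsto x^p$ on $\bbR_+$, i.e.\ $(x+y)^p\geq x^p+y^p$ for $x,y\geq 0$ and $p\geq 1$, which I would verify by normalizing $t=x/(x+y)\in[0,1]$ and using the one-line estimates $t^p\leq t$ and $(1-t)^p\leq 1-t$. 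The case $q'=\infty$ is immediate: $(a^q-b^q)^{1/q}\leq a=\max\{a,b\}$ since $b^q\geq 0$. The only conceptual content of the proof is isolating these two reductions --- ``snowflake'' for the triangle inequality and ``superadditivity of $x\mapsto x^p$'' for monotonicity; beyond them every step is an elementary one-line calculation, so I do not anticipate any real obstacle, with only the bookkeeping at the $q=\infty$ endpoint requiring slight care.
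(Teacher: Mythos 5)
Your proof is correct. Note, however, that the paper does not supply its own proof of this proposition: it cites \cite[Lemma 2.2, Example 2.7, Proposition 2.11]{memoli2022metric} and relies on that reference, so there is no internal argument to compare against. Your approach is the natural elementary one and all steps check out: the substitution $u=a^q$ reduces the triangle inequality for $q<\infty$ to the standard snowflake fact that $|u-v|^{1/q}$ is a metric when $1/q\in(0,1]$, which follows from subadditivity of the concave function $x\mapsto x^{1/q}$ vanishing at the origin; at $q=\infty$ your case split actually proves the stronger ultrametric inequality. For monotonicity, the reduction to $(A-B)^p\leq A^p-B^p$ with $p=q'/q\geq 1$, proved via superadditivity of $x\mapsto x^p$ (equivalently $t^p+(1-t)^p\leq 1$ for $t\in[0,1]$), is exactly right, and the $q'=\infty$ endpoint $(a^q-b^q)^{1/q}\leq a=\max\{a,b\}$ closes the argument. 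This is a complete, self-contained proof of the cited result.
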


The fact that $(\bbR_+,\Lambda_q)$ is a metric space, enables us to consider the $p$-Wasserstein distance  $d^{(\bbR_+,\Lambda_q)}_{\operatorname{W}p}$ as a generalization of $d^{(\bbR_+,\Lambda_1)}_{\operatorname{W}p}=d^{\bbR_+}_{\operatorname{W}p}$.

\begin{rmk}[Closed-form solution for $d^{(\bbR_+,\Lambda_q)}_{\operatorname{W}p}$]\label{rmk:clfrmdWR+Lbdaq}
For $1\leq q\leq p<\infty$, we have the following equality which generalizes Example \ref{ex:dWonR}:
\[
d^{(\bbR_+,\Lambda_q)}_{\operatorname{W}p}(\alpha,\beta)
=
\left(
\int_0^1 
\big(\Lambda_q(F_{\alpha}^{-1}(u),F_{\beta}^{-1}(u))\big)^p \ du
\right)^{1/p}.
\]
While the above equality is a special case of \cite[Theorem A.4]{memoli2021ultrametric}, we include a proof here for pedagogical reasons: 
\begin{align*}
    \left(d^{(\bbR_+,\Lambda_q)}_{\operatorname{W}p}(\alpha,\beta)\right)^p&=\inf_{\gamma \in \mathcal{M}(\alpha, \beta)} \int_{\bbR_+\times\bbR_+} \left(\Lambda_q(a,b)\right)^p \ \gamma(da \times db)\\
    &=\inf_{\gamma \in \mathcal{M}(\alpha, \beta)} \int_{\bbR_+\times\bbR_+} \vert S_q(a)-S_q(b)\vert^{p/q} \ \gamma(da \times db)\\
    &=\inf_{\gamma \in \mathcal{M}(\alpha, \beta)} \int_{\bbR_+\times\bbR_+} \vert s-t\vert^{p/q} \ \big((S_q, S_q)_\#\gamma\big)(ds \times dt)\\
    &=\inf_{\gamma \in \mathcal{M}((S_q)_\#\alpha,(S_q)_\#\beta)} \int_{\bbR_+\times\bbR_+} \vert s-t\vert^{p/q} \ \gamma(ds \times dt)\\
    &=\int_0^1 \Big|F_{(S_q)_\#\alpha}^{-1}(u)-F_{(S_q)_\#\beta}^{-1}(u)\Big|^{p/q} \ du\\
    &=\int_0^1 
\big(\Lambda_q(F_{\alpha}^{-1}(u),F_{\beta}^{-1}(u))\big)^p \ du
\end{align*}
where $S_q:\bbR_+\rightarrow\bbR_+$ is the map sending $x$ to $x^q$. The fourth equality holds by \cite[Lemma 3.2]{chowdhury2019gromov}, fifth equality holds by Example \eqref{ex:dWonR}, and the last equality holds since $F_{(S_q)_\#\alpha}^{-1}=(F_{\alpha}^{-1})^q$.
\end{rmk}

The following remark makes the connection between the generalized Wasserstein distance $d^{(\bbR_+,\Lambda_q)}_{\operatorname{W}p}$ and the Wasserstein distance on $\bbR_+$ with the usual metric.

\begin{rmk}(Relationship between $ d^{(\bbR_+,\Lambda_q)}_{\operatorname{W}p}$ and $d_{\operatorname{W}p/q}^\mathbb{R}$)\label{rmk:old-new wasserstein}\label{rmk:Wpq-closed-form}  In the previous remark, we saw that when $1\leq q\leq p<\infty$,
\begin{align*}
    d^{(\bbR_+,\Lambda_q)}_{\operatorname{W}p}(\alpha,\beta)  =  \left(\int_0^1 
\big(\Lambda_q(F_{\alpha}^{-1}(u),F_{\beta}^{-1}(u))\big)^p \ du\right)^{1/p}=\left(\int_0^1 \big|F_{(S_q)_\#\alpha}^{-1}(u)-F_{(S_q)_\#\beta}^{-1}(u)\big|^{p/q} \ du \right)^{1/p}.
\end{align*}
The right hand side of the above expression can be interpreted as the $p/q$- Wasserstein distance between the measures $(S_q)_\#\alpha$ and $(S_q)_\#\beta$  on $\bbR_+$ with the usual metric ($\Lambda_1$) as follows: 
$$ d^{(\bbR_+,\Lambda_q)}_{\operatorname{W}p}(\alpha,\beta) =  \left(d_{\operatorname{W}p/q}^\bbR((S_q)_\#\alpha,(S_q)_\#\beta)\right)^{1/q}. $$
\end{rmk}

\begin{defn}
A \emph{metric measure space} is a triple $(X,d_X,\mu_X)$ where $(X,d_X)$ is a compact metric space, and $\mu_X$ is a Borel probability measure on $X$ such that $\mathrm{supp}[\mu_X]=X$. We denote the collection of all metric measure spaces by $\mathcal{G}_w.$ We will often abuse notation and write $X$ to represent the triple $(X,d_X,\mu_X) \in \mathcal{G}_w.$
\end{defn}

The next example is central to our paper.
\begin{ex}[$\Sp^n_E$ and $\Sp^n_G$] \label{ex:spheres-def} For each integer $n\geq 1$, we  consider the $n$-dimensional unit sphere  $\Sp^n\subset \mathbb{R}^{n+1}$ as a metric measure space by equipping it with the uniform measure and the geodesic or Euclidean metric. For example, when endowed with its geodesic distance, the usual $n$-dimensional unit sphere gives rise to $(\Sp^n,d_n,\mu_n) \in \Gcalw$, where $d_n(x,x') :=\arccos(\langle x,x'\rangle) $ for $x,x'\in\Sp^n$. 
 We henceforth write $\Sp^n_E$ and $\Sp^n_G$  to denote the spheres equipped with the Euclidean and geodesic metrics, respectively, as metric measure spaces.
 
We also consider $\Sp_G^0$, the $0$-dimensional sphere consisting of two points at distance $\pi$ and, similarly, $\Sp^0_E$ consists of two points at distance $2$. In both cases we view these 0-dimensional spheres as mm-spaces by endowing them with the uniform measure (on two points). Note that $\diam(\Sp^n_G) = \pi$ and $\diam(\Sp^n_E)=2$ for all integers $n\geq 0$.
\end{ex}

\begin{defn}[$p$-diameter]\label{def:diamp}
The \emph{diameter} $\diam(A)$ of bounded  subset $A$ of a metric space $(X,d_X)$ is defined as $$\diam(A) := \sup_{x,x'\in A}d_X(x,x').$$

 Let $(X,d_X,\mu_X) \in \mathcal{G}_w$. The \emph{$p$-diameter} of $X$ for $p\in [1,\infty]$ is:
\[
\diamp(X):= \left(\int_X \int_X d^p_X(x,x') \ \mu_X(dx) \ \mu_X(dx')\right)^{1/p} \,\,\mbox{for $1\le p<\infty$}
\] and 
\[
\diam_{\infty}(X):= \diam(\supp[\mu_X]).
\]
\end{defn}

\begin{defn}[$(p,q)$-distortion] Let $(X,d_X,\mu_X)$ and $(Y,d_Y,\mu_Y)$ be metric measure spaces and let $\gamma \in \mathcal{M}(\mu_X,  \mu_Y)$. Then, for each $p,q\in [1,\infty]$, the \emph{$(p,q)$-distortion} of the coupling $\gamma$ is defined as:
\begin{equation*}\label{eq:def-distort}
     \dispq(\gamma) := \bigg(  \int_{X\times Y}\int_{X\times Y} \big(\Lambda_q(d_X(x,x'),d_Y(y,y'))\big)^p \ \gamma(dx \times dy) \ \gamma( dx' \times dy')\bigg)^{1/p} 
\end{equation*}

\noindent for $1\le p<\infty$, and 

\[
\dis_{\infty,q}(\gamma) := \sup_{(x,y),(x',y')\in\supp[\gamma]} \Lambda_q(d_X(x,x'),d_Y(y,y')).
\]
\end{defn}

\begin{ex}\label{ex:exdisonepoint} Consider the coupling $\gamma= \{\mu_X \otimes \delta_{y_0}^Y\} \in \Mcal(\mu_X,\mu_Y)$ where $Y=\{y_0\}$ is the one point metric measure space. Then, for all $p,q\in [1,\infty]$, one can verify that $\dispq(\gamma)=\mathrm{diam}_p(X)$ as follows:

\begin{align*} 
     \dispq(\{\mu_X \otimes \delta_{y_0}^Y\}) 
     = & \ \bigg( \int_{X\times Y}\int_{X\times Y} \big(\Lambda_q(d_X(x,x'),d_Y(y_0,y_0))\big)^p \ \gamma(dx \times dy) \ \gamma( dx' \times dy')\bigg)^{1/p} \\
     = & \ \bigg( \int_{X\times Y}\int_{X\times Y} | d_X^q(x,x')|^{p/q} \ \gamma(dx \times dy) \ \gamma( dx' \times dy')\bigg)^{1/p}\\
     = & \ \bigg( \int_{X \times X} d_X^p(x,x') \ \mu_X(dx) \ \mu_X( dx')\bigg)^{1/p} \\
     = & \ \mathrm{diam}_p(X)
\end{align*}
for the $p<\infty$ case, and the $p=\infty$ case can be checked in a similar way.
\end{ex}

\begin{ex}\label{ex:dis-42} Let $p=4$ and $q=2$ and $\gamma \in \Mcal(\mu_X,\mu_Y)$, then we have:
\begin{align*} 
(\disft(\gamma))^4 & =
\left( \int_{X\times Y} \int_{X\times Y} \left( d^2_X(x, x') - d_Y^2(y,y') \right)^{2} \ \gamma(dx \times dy) \ \gamma(dx'\times dy') \right) \\
& = \int_{X \times X} d^4_X(x,x')\ \mu_X(dx)\,\mu_X(dx') + \int_{Y \times Y} d^4_Y(y,y)\ \mu_Y(dy)\,\mu_Y(dy')\\
& - 2\int_{X\times Y} \int_{X\times Y} d^2_X(x,x') \, d^2_Y(y,y') \ \gamma(dx\times dy) \ \gamma(dx'\times dy'). 
\end{align*}
\end{ex}

\begin{rmk}\label{rmk:max-dis42}
Note that the marginals $\mu_X$ and $\mu_Y$ determine the first two terms in Example \ref{ex:dis-42} (in fact the sum of the first two terms is $(\diam_4(X))^4+(\diam_4(Y))^4$) and thus,
\[
\mbox{$\gamma$ minimizes $\disft(\gamma)$ $\Leftrightarrow$ $\gamma$ maximizes }
\int_{X\times Y} \int_{X\times Y} d^2_X(x,x') \, d^2_Y(y,y') \ \gamma(dx\times dy) \ \gamma(dx'\times dy').
\]
The equivalence above will prove instrumental in Section~\ref{sec:sphr-lb} of our paper where we prove the optimality of a coupling for achieving the Gromov-Wasserstein distance between spheres.
Passing to the distance squared allows us to unfold Euclidean distances into expressions that depend solely on inner products.
Our proof of the theorem depends on the favorable interplay between these inner products and linear maps.
In fact, our introduction of the broader family of $(p,q)$-Gromov-Wasserstein distances was motivated by this ease of analysis in the case $p = 4,\ q = 2$.
\end{rmk}

\begin{defn}[$(p,q)$-Gromov-Wasserstein distance] Let $(X,d_X,\mu_X)$ and $(Y,d_Y,\mu_Y)$ be metric measure spaces. Let $\gamma \in \mathcal{M}(\mu_X, \mu_Y)$. The \emph{$(p,q)$-Gromov-Wasserstein distance} between $\mu_X$ and $\mu_Y$ is given by one-half of the infimum of the $(p,q)$-distortion:
    \begin{equation*}\label{eq:def-gwpq}
        \dgwpq(X,Y) := 
        \dfrac{1}{2}
        \inf_{\gamma \in \Mcal(\mu_X, \mu_Y)}  \dispq(\gamma). 
    \end{equation*}
\noindent where $p,q\in [1,\infty]$.
\end{defn}

\begin{rmk}
In the case $q=1$ we recover the $p$-Gromov-Wasserstein distance $\dgwp$ from \cite{memoli2011gromov}. 
\begin{equation*}\label{eq:def-gwpq1}
\dgwpone(X,Y) = 
\dfrac{1}{2}
\inf_{\gamma \in \Mcal(\mu_X, \mu_Y)} 
\dis_{p,1}(\gamma) = \dgwp(X,Y).
  \end{equation*}
In the case $q=\infty$ we recover the $p$-ultrametric Gromov-Wasserstein distance $u_{\mathrm{GW},p}$ from \cite{memoli2021ultrametric}. 
\begin{equation*}\label{eq:def-gwpqinfty}
\dgwpinf(X,Y) = 
\dfrac{1}{2}
\inf_{\gamma \in \Mcal(\mu_X, \mu_Y)} 
\dis_{p,\infty}(\gamma) =\dfrac{1}{2}u_{\mathrm{GW},p}(X,Y).
\end{equation*}
\end{rmk}

\begin{ex} \label{ex:dgwpq-onepoint}
Let $X,Y \in \Gcalw$ where $Y=\{y_0\}$. It follows from Example \ref{ex:exdisonepoint},
  \[
    \dgwpq(X,Y) = \frac{1}{2}\dispq(\{\mu_X \otimes \delta_{y_0}^Y\}) = \frac{1}{2}\diamp(X) \\
    \]
for all $p,q\in [1,\infty]$.
\end{ex}

The following theorem shows that the $(p,q)$-Gromov-Wasserstein distance is a well defined metric on $\mathcal{G}_w$. This is a generalization of both Theorem 5.1 in \cite{memoli2011gromov}, which shows that the original $p$-Gromov-Wasserstein distance $d_{\mathrm{GW},p}$ is a metric on $\mathcal{G}_{w}$, and of Theorem 3.10 in \cite{memoli2021ultrametric}, which shows that the ultrametric $p$-Gromov-Wasserstein distance $u_{\mathrm{GW},p}$ is a $p$-metric on the collection of compact ultrametric spaces.

\begin{thm}\label{thm:props}
The $(p,q)$-Gromov-Wasserstein distance, $\dgwpq$, is a metric on the collection of isomorphism classes of $\mathcal{G}_w$ for all $p,q\in [1,\infty]$. Furthermore, $\dgwpq\leq \dgwppqq$ whenever $p \leq p'$ and $q\leq q'$.       
\end{thm}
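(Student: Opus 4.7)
The plan is to verify the standard metric axioms (non-negativity, symmetry, identity, triangle inequality) and then the monotonicity claim, following the blueprints of \cite{memoli2011gromov} for $\dgwp$ and of \cite{memoli2021ultrametric} for $u_{\mathrm{GW},p}$, with the extra parameter $q$ entering only through $\Lambda_q$. Non-negativity and symmetry are immediate from the definition of $\dispq$, since $\Lambda_q$ is symmetric and non-negative by Proposition \ref{prop:lmbdaq}. If $\phi:X\to Y$ is a measure-preserving isometry, then $\gamma:=(\mathrm{id}_X\times\phi)_\#\mu_X\in\mathcal{M}(\mu_X,\mu_Y)$ has $\Lambda_q(d_X(x,x'),d_Y(\phi(x),\phi(x')))=0$ identically, so $\dispq(\gamma)=0$ and hence $\dgwpq(X,Y)=0$.

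For the converse (rigidity), I would first note that $\mathcal{M}(\mu_X,\mu_Y)$ is compact in the weak topology (Prokhorov, using compactness of $X$ and $Y$) and that $\gamma\mapsto\dispq(\gamma)$ is lower semicontinuous, so an optimal coupling $\gamma^\ast$ exists. If $\dgwpq(X,Y)=0$ then $\dispq(\gamma^\ast)=0$, which forces $\Lambda_q(d_X(x,x'),d_Y(y,y'))=0$ for $(\gamma^\ast\otimes\gamma^\ast)$-a.e.\ pairs. Since $\Lambda_q$ is a genuine metric on $\mathbb{R}_+$ (Proposition \ref{prop:lmbdaq}), this gives $d_X(x,x')=d_Y(y,y')$ on $\supp[\gamma^\ast]\times\supp[\gamma^\ast]$, and the standard argument (as in \cite[Theorem 5.1]{memoli2011gromov}) then produces a measure-preserving isometry between $X$ and $Y$ from the support of $\gamma^\ast$.

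For the triangle inequality, I would apply the Gluing Lemma: given $X,Y,Z\in\mathcal{G}_w$ with $\gamma_{XY}\in\mathcal{M}(\mu_X,\mu_Y)$ and $\gamma_{YZ}\in\mathcal{M}(\mu_Y,\mu_Z)$, disintegrate both with respect to $\mu_Y$ and build a measure $\mu$ on $X\times Y\times Z$ whose $(X,Y)$- and $(Y,Z)$-marginals are $\gamma_{XY}$ and $\gamma_{YZ}$; set $\gamma_{XZ}:=(\pi_{XZ})_\#\mu\in\mathcal{M}(\mu_X,\mu_Z)$. The triangle inequality for $\Lambda_q$ from Proposition \ref{prop:lmbdaq} gives
\[
\Lambda_q(d_X(x,x'),d_Z(z,z'))\le \Lambda_q(d_X(x,x'),d_Y(y,y'))+\Lambda_q(d_Y(y,y'),d_Z(z,z'))
\]
pointwise on $(X\times Y\times Z)^2$. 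Integrating against $\mu\otimes\mu$ and applying Minkowski's inequality for the $L^p$-norm yields
\[
\dispq(\gamma_{XZ})\le \dispq(\gamma_{XY})+\dispq(\gamma_{YZ}),
\]
and taking infima gives $\dgwpq(X,Z)\le \dgwpq(X,Y)+\dgwpq(Y,Z)$. The case $p=\infty$ is handled analogously, replacing integration by the essential supremum. I expect this step to be the main technical obstacle, because one must be careful with the disintegration when the measures are not assumed to have densities, but the argument is standard.

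Finally, for monotonicity, fix any $\gamma\in\mathcal{M}(\mu_X,\mu_Y)$. Since $\gamma\otimes\gamma$ is a probability measure on $(X\times Y)^2$, Jensen's inequality implies that $r\mapsto\bigl(\int f^r\,d(\gamma\otimes\gamma)\bigr)^{1/r}$ is non-decreasing for any non-negative $f$, giving $\dispq(\gamma)\le\dispprimeq(\gamma)$ when $p\le p'$. Separately, Proposition \ref{prop:lmbdaq} gives $\Lambda_q\le\Lambda_{q'}$ pointwise whenever $q\le q'$, so $\dispq(\gamma)\le\dispprimeq(\gamma)\le\disprimepprimeq(\gamma)$ when both $p\le p'$ and $q\le q'$. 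Taking infima over $\gamma$ yields $\dgwpq\le\dgwppqq$, completing the proof.
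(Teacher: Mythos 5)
Your proposal is correct and follows the same general blueprint as the paper's proof for the triangle inequality (gluing lemma plus the triangle inequality for $\Lambda_q$ from Proposition~\ref{prop:lmbdaq}, followed by Minkowski in $L^p$) and for the monotonicity claim (Jensen/$L^p$-norm monotonicity in $p$, together with $\Lambda_q\le\Lambda_{q'}$ in $q$; this is exactly Lemma~\ref{lem:inc} in the paper). The genuine difference is in the rigidity step. You prove it directly: compactness of $\mathcal{M}(\mu_X,\mu_Y)$ via Prokhorov, lower semicontinuity of $\gamma\mapsto\dispq(\gamma)$, existence of an optimal $\gamma^\ast$, and then the conclusion $d_X=d_Y$ on $\supp[\gamma^\ast]\times\supp[\gamma^\ast]$ and the standard isometry-from-support argument. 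The paper instead uses the monotonicity it has already established: since $q\ge 1$ gives $\Lambda_q\ge\Lambda_1$, one has $\dgwpq(X,Y)\ge \dgwpone(X,Y)=\dgwp(X,Y)$, and so $\dgwpq(X,Y)=0$ forces $\dgwp(X,Y)=0$, which is the known rigidity result of \cite[Theorem 5.1]{memoli2011gromov}. Your route is self-contained but incurs the extra burden of verifying lower semicontinuity of $\dispq$ (which is slightly delicate in the corner cases $q=\infty$, where $\Lambda_\infty$ is discontinuous, and $p=\infty$, where one takes a supremum over a support that can behave badly under weak limits); the paper's reduction sidesteps this entirely by reusing monotonicity, which is why it proves monotonicity first. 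Both arguments are valid; the paper's is shorter and avoids the technical verification you flagged.
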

We defer the proof of this theorem to  Section~\ref{subsec:pfth1-1}.

\begin{rmk}\label{rmk:relation} In \cite[Section 9]{sturm-ss} Sturm considers a two parameter family of distances, $\Delta\!\!\!\!\Delta_{p,q}$, which is closely related to but differs from $\dgwpq$. A precise relationship is, $$\frac{1}{2}\Delta\!\!\!\!\Delta_{p/q,q}(X,Y) =\big(d_{{\operatorname{GW}}p,q}(X,Y)\big)^{q},$$ for $X,Y\in\mathcal{G}_w$ and $p,q\in [1,\infty)$. See  Remark \ref{rmk:sturm-comp}. 

Also, in contrast with $d_{{\operatorname{GW}}p,q}$, $\Delta\!\!\!\!\Delta_{p,q}$ is not homogeneous: if for $\lambda\geq 0$, $\lambda X$ denotes the metric measure space $(X,\lambda\,d_X,\mu_X)$ (resp. for $\lambda Y)$, $\Delta\!\!\!\!\Delta_{p,q}(\lambda X,\lambda Y)=\lambda^q\Delta\!\!\!\!\Delta_{p,q}(X,Y)$ whereas $d_{{\operatorname{GW}}p,q}(\lambda X,\lambda\,Y)=\lambda \, d_{{\operatorname{GW}}p,q}(X,Y)$.
\end{rmk}

\section{The $(p,q)$-Gromov-Wasserstein Distance Between Spheres}\label{sec:sphr-lb}
Despite the increasing number of applications, the precise value of the Gromov-Wasserstein distance is only known for a few cases \cite{memoli2011gromov,salmona2021gromov}. In this section, we compute the exact value of $\dgwft(\Sp^m_E,\Sp^n_E)$ for arbitrary $m$ and $n$.

\subsection{The Equatorial Coupling}

So far, we have provided lower bounds for the $(p,q)$-Gromov-Wasserstein distance between spheres equipped with the geodesic distance. We now provide a strategy for providing upper bounds. In the following two sections, we will consider the equatorial coupling (defined below) and show that when $p=4$ and $q=2$, the equatorial coupling is optimal for the case of spheres with Euclidean distance.

\smallskip
Assuming $n>m$ we will implicitly use the (isometric) embedding $\bbR^{m+1}\hookrightarrow \bbR^{n+1}$ given by $$(x_1,\ldots,x_{m+1}) \mapsto (x_1,\ldots,x_{m+1},0,\ldots,0).$$
\begin{defn}[Projection and Equatorial map]\label{def:eq-map}
For all $n>m$, we define the
 \emph{projection map} $\pi_{n+1,m+1}:\bbR^{n+1}\rightarrow\bbR^{m+1}$ in the following way:
\begin{align*}
    \pi_{n+1,m+1}:\bbR^{n+1}&\rightarrow\bbR^{m+1}\\
    (x_1,\dots,x_{n+1})&\longmapsto (x_1,\dots,x_{m+1}).
\end{align*}
Note that $\pi_{n+1,m+1}$ as a measurable map from $\bbR^{n+1}$ to $\bbR^{m+1}$.

The \emph{equatorial map} $e_{n,m}:\Sp^n\backslash{O_{n,m}}\rightarrow\Sp^m$ is defined in the following way:

\begin{align*}
    e_{n,m}:\Sp^n\backslash{O_{n,m}}&\longrightarrow\Sp^m\\
    (x_1,\dots,x_{n+1})&\longmapsto\frac{\pi_{n+1,m+1}(x_1,\dots,x_{n+1})}{\Vert \pi_{n+1,m+1}(x_1,\dots,x_{n+1}) \Vert} = \frac{(x_1,\dots,x_{m+1})}{\Vert (x_1,\dots,x_{m+1}) \Vert},
\end{align*}
where $O_{n,m}:=\{x\in \Sp^{n}\subset \mathbb{R}^{n+1}|x_1=\dots=x_{m+1}=0\}$. Note that, since $\mu_n(O_{n,m})=0$, one can extend the domain of $e_{n,m}$ to whole $\Sp^n$ while preserving  measurability. Thus, through a slight abuse of notation, we will view $e_{n,m}$ as a measurable map from $\Sp^n$ to $\Sp^m$; cf. Figure \ref{fig:prj}.  
\end{defn}

We then have the following claim whose proof we omit for brevity.
\begin{claim}\label{claim:eq-coupling} For all $n>m$, the equatorial map $e_{n,m}:\Sp^n \to \Sp^m$  induces a coupling  $\gamma_{m,n} \in \Mcal(\mu_m,\mu_n)$, where $\mu_m$ and $\mu_n$ are the uniform measures on $\Sp^m$ and $\Sp^n$ respectively, and $\gamma_{m,n}$ is given by: 
\begin{equation}\label{eq:eq-coupling}
\gamma_{m,n}:=
(e_{n,m}, \mathrm{id}_{\Sp^n})_\#\mu_n.
\end{equation}
\end{claim}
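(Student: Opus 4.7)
The plan is to verify the two marginal conditions for $\gamma_{m,n}$ to be a coupling between $\mu_m$ and $\mu_n$. That is, one must check that for the canonical projections $\pi_{\Sp^m}:\Sp^m\times\Sp^n\to\Sp^m$ and $\pi_{\Sp^n}:\Sp^m\times\Sp^n\to\Sp^n$,
\[
(\pi_{\Sp^m})_\#\gamma_{m,n}=\mu_m\qquad\text{and}\qquad (\pi_{\Sp^n})_\#\gamma_{m,n}=\mu_n.
\]
The second is immediate by the functoriality of pushforwards: since $\pi_{\Sp^n}\circ(e_{n,m},\mathrm{id}_{\Sp^n})=\mathrm{id}_{\Sp^n}$, we get $(\pi_{\Sp^n})_\#\gamma_{m,n}=(\mathrm{id}_{\Sp^n})_\#\mu_n=\mu_n$. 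Similarly, $(\pi_{\Sp^m})_\#\gamma_{m,n}=(e_{n,m})_\#\mu_n$, so the whole claim reduces to showing the identity
\[
(e_{n,m})_\#\mu_n=\mu_m.
\]

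My approach to this identity is via a symmetry/uniqueness argument: $\mu_m$ is the unique Borel probability measure on $\Sp^m$ that is invariant under the standard action of $O(m+1)$, so it suffices to show that $(e_{n,m})_\#\mu_n$ is $O(m+1)$-invariant. Embed $O(m+1)\hookrightarrow O(n+1)$ as block-diagonal matrices acting as the identity on the last $n-m$ coordinates. For any $R\in O(m+1)$ and any $x\in\Sp^n\setminus O_{n,m}$, the identity
\[
e_{n,m}(R\cdot x)=\frac{R\cdot(x_1,\dots,x_{m+1})}{\|R\cdot(x_1,\dots,x_{m+1})\|}=R\cdot e_{n,m}(x)
\]
holds because $R$ preserves norms and acts trivially on the last $n-m$ coordinates; that is, $e_{n,m}$ is $O(m+1)$-equivariant (on the set of full measure where it is defined). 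Combined with the $O(n+1)$-invariance of $\mu_n$ (and hence $O(m+1)$-invariance), a direct computation on Borel sets $A\subseteq\Sp^m$ yields
\[
[(e_{n,m})_\#\mu_n](R\cdot A)=\mu_n(R\cdot e_{n,m}^{-1}(A))=\mu_n(e_{n,m}^{-1}(A))=[(e_{n,m})_\#\mu_n](A),
\]
so $(e_{n,m})_\#\mu_n$ is $O(m+1)$-invariant and must coincide with $\mu_m$.

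I do not expect a serious obstacle here: the only subtleties are the measurability of $e_{n,m}$ (handled by the remark in Definition~\ref{def:eq-map} that $\mu_n(O_{n,m})=0$, allowing a measurable extension to all of $\Sp^n$) and the need to justify invoking uniqueness of the $O(m+1)$-invariant probability measure on $\Sp^m$, which is a standard fact about Haar measure on homogeneous spaces. If desired, the uniqueness step could be avoided by computing the pushforward directly on a generating family of sets (e.g.\ spherical caps), using Fubini to integrate out the last $n-m$ coordinates of $\Sp^n$ parametrized as $\sin(\theta)\,\omega+\cos(\theta)\,\eta$ with $\omega\in\Sp^m$ and $\eta\in\Sp^{n-m-1}$, but the symmetry argument is much cleaner.
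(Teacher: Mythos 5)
The paper states this claim with the remark that the ``proof is omitted for brevity,'' so there is no reference argument to compare against; I assess your proof on its own. Your argument is correct. The reduction to the single identity $(e_{n,m})_\#\mu_n=\mu_m$ is the right first step (the $\mu_n$ marginal is immediate as you say), and the symmetry argument is a clean way to finish: $e_{n,m}$ is $O(m+1)$-equivariant under the block-diagonal embedding $O(m+1)\hookrightarrow O(n+1)$, $\mu_n$ is $O(n+1)$-invariant, and the unique $O(m+1)$-invariant Borel probability measure on the homogeneous space $\Sp^m\cong O(m+1)/O(m)$ is the uniform measure. One small notational point: in the display $[(e_{n,m})_\#\mu_n](R\cdot A)=\mu_n(R\cdot e_{n,m}^{-1}(A))$, the $R$ on the right-hand side should really be the block-diagonal $\tilde R\in O(n+1)$ acting on $\Sp^n$, not $R\in O(m+1)$ itself; this is a trivial fix since $e_{n,m}^{-1}(R\cdot A)=\tilde R\cdot e_{n,m}^{-1}(A)$ by equivariance. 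For what it is worth, the paper hints at an alternative route in Remark~\ref{rem:relation-couplings}: one can write $\gamma_{m,n}=(f_{m+1},f_{n+1})_\#\gamma^{\mathrm{gauss}}_{m+1,n+1}$ where $f_{d}$ is central projection $x\mapsto x/\|x\|$, and then invoke $(f_d)_\#\eta_d=\mu_{d-1}$; this replaces the Haar-uniqueness input by the standard Gaussian characterization of the uniform measure on the sphere, and is essentially the same computation in different clothing. Either route is fine; yours is self-contained and correct.
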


  We call $\gamma_{m,n}\in \Mcal(\mu_m,\mu_n)$ as in Claim \ref{claim:eq-coupling} \emph{the equatorial coupling}. 

\begin{rmk}
Since $\gamma_{m,n}\in \Mcal(\mu_m,\mu_n)$, it follows trivially that
    $$
    \dgwpq(\Sp_{\bullet}^m,\Sp_{\bullet}^n)\le 
    \dfrac{1}{2}
    \dispq(\gamma_{m,n},\Sp_{\bullet}^m,\Sp_{\bullet}^n).
    $$
\end{rmk}

\begin{ex}($\dis_{4,2}(\gamma_{0,1},\Spg^0,\Spg^1)$)\label{ex:dis42g01} By Remark~\ref{rmk:max-dis42} and Example~\ref{ex:dlb-geo}, we have
\begin{align*}
    &\big(\dis_{4,2}(\gamma_{0,1},\Spg^0,\Spg^1)\big)^4\\
    =&~
    (\diam_4(\Spg^0))^4+(\diam_4(\Spg^1))^4
    -2\int 
    \big(d_0(e_{1,0}(y),e_{1,0}(y'))\big)^2\,
    \big(d_1(y,y')\big)^2\,
    \mu_1(dy)\mu_1(dy')\\
    =&~
    \dfrac{\pi^4}{2}+\dfrac{\pi^4}{5}
    -2\int_{\Sp^1}\int_{\Sp^1}
    (d_0(e_{1,0}(y),e_{1,0}(y')))^2(d_1(y,y'))^2
    \mu_1(dy)\mu_1(dy')\\
    =&~
    \dfrac{\pi^4}{2}+\dfrac{\pi^4}{5}
    -2\times\dfrac{\pi^4}{4}
    =\dfrac{\pi^4}{5}
\end{align*}
where the value of the integral in the last line follows from the calculation in  Example \ref{ex:dis42g01-app}. Hence
$$
\dis_{4,2}(\gamma_{0,1},\Spg^0,\Spg^1)
=\left(\dfrac{1}{5}\right)^{1/4}\pi
\approx 2.101.
$$
This implies that 
$$
\dgwft(\Spg^0,\Spg^1)
\le \dfrac{1}{2}
\dis_{4,2}(\gamma_{0,1},\Spg^0,\Spg^1)
\approx 1.050.
$$
\end{ex}

\begin{ex}\label{ex:dis42g12}
    Following Example~\ref{ex:dis42g01}, one can do analogous calculations in the case of $\Spg^1$ and $\Spg^2$ to obtain
    \begin{align*}
    &\big(\dis_{4,2}(\gamma_{1,2},\Spg^1,\Spg^2)\big)^4\\
    =&~
    (\diam_4(\Spg^1))^4+(\diam_4(\Spg^2))^4
    -2\int 
    (d_1(e_{2,1}(y),e_{2,1}(y')))^2(d_2(y,y'))^2
    \mu_2(dy)\mu_2(dy')\\
    =&~
    \dfrac{\pi^4}{5}+
    24-6\pi^2+\dfrac{\pi^4}{2}
    -2\int_{\Sp^1}\int_{\Sp^1}
    (d_1(e_{2,1}(y),e_{2,1}(y')))^2(d_2(y,y'))^2
    \mu_1(dy)\mu_1(dy')\\
    \approx &~
    \dfrac{\pi^4}{5}+
    24-6\pi^2+\dfrac{\pi^4}{2}
    -2\times 14.159
    \approx 
    4.651
\end{align*}
where we compute the integral in the second last step using numerical integration along with the values of 4-diameters computed in  Example~\ref{ex:dlb-geo}. This immediately implies that
$$
\dgwft(\Spg^1,\Spg^2)
\le \dfrac{1}{2}
\dis_{4,2}(\gamma_{1,2},\Spg^1,\Spg^2)
\approx 0.734.
$$
\end{ex}

\subsection{Exact Determination of $\dgwft(\Sp^m_E,\Sp^n_E)$}\label{subsec:exact}

In this section, we will establish that the equatorial coupling $\gamma_{m,n}$ is a minimizer of the $(4,2)$-distortion $$\disft(\cdot,\Sp^m_E,\Sp^n_E):\mathcal{M}(\mu_m,\mu_n)\to \bbR_+$$ amongst all couplings  between $\mu_m$ and $\mu_n$. Our first result is the following lemma, which exactly computes the $(4,2)$-distortion under the equatorial coupling.

\begin{lem}\label{lem:GW-eu-eq} The $(4,2)$-distortion of the equatorial coupling between spheres $\Sp^m_E$ and $\Sp^n_E$ respectively equipped with their Euclidean distance and uniform measure with $n \ge m$ is 
\[
\disft(\gamma_{m,n},\Sp^m_E,\Sp^n_E) =
    \left[
    \dfrac{4}{m+1}+\dfrac{4}{n+1}
    -\dfrac{8}{m+1}
\left(    \dfrac{\Gamma\left(\tfrac{m+2}{2}\right)
\Gamma\left(\tfrac{n+1}{2}\right)}{
\Gamma\left(\tfrac{m+1}{2}\right)
\Gamma\left(\tfrac{n+2}{2}\right)
}\right)^2\right]^{1/4}.
\]
\end{lem}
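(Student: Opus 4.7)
The plan is to unfold the $(4,2)$-distortion via the formula from Example~\ref{ex:dis-42} (with $d_X=\|\cdot-\cdot\|$) into three pieces: two marginal contributions involving $4$-diameters, and a cross term that we push forward through $e_{n,m}$ and reduce to a single one-dimensional integral on $\Sp^n$.

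\textbf{Step 1 (setup and pushforward).} Since $\gamma_{m,n}=(e_{n,m},\mathrm{id}_{\Sp^n})_\#\mu_n$, Example~\ref{ex:dis-42} gives
\begin{align*}
\big(\disft(\gamma_{m,n})\big)^4
&=\diam_4(\Sp^m_E)^4+\diam_4(\Sp^n_E)^4\\
&\quad -2\int_{\Sp^n}\!\!\int_{\Sp^n}\|e_{n,m}(y)-e_{n,m}(y')\|^2\,\|y-y'\|^2\,\mu_n(dy)\mu_n(dy').
\end{align*}
Using $\|u-v\|^2=2-2\langle u,v\rangle$ on the unit sphere, I would expand both $\|x-x'\|^4$ and the product $\|e_{n,m}(y)-e_{n,m}(y')\|^2\|y-y'\|^2$ into polynomials in the relevant inner products; then all cross terms that are odd under $y\mapsto -y$ (or $x\mapsto -x$) vanish by symmetry of $\mu_n$ (resp.\ $\mu_m$), and I am left only with even inner products.

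\textbf{Step 2 (marginal contributions).} For the uniform measure on $\Sp^k\subset\bbR^{k+1}$, rotational invariance and $\sum_i x_i^2=1$ give $\int_{\Sp^k\times\Sp^k}\langle x,x'\rangle\,\mu_k\mu_k=0$ and $\int_{\Sp^k\times\Sp^k}\langle x,x'\rangle^2\,\mu_k\mu_k=\tfrac{1}{k+1}$. Substituting into $\|x-x'\|^4=4-8\langle x,x'\rangle+4\langle x,x'\rangle^2$ yields
\[
\diam_4(\Sp^m_E)^4=4+\tfrac{4}{m+1},\qquad \diam_4(\Sp^n_E)^4=4+\tfrac{4}{n+1}.
\]

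\textbf{Step 3 (cross term in coordinates).} Writing $u(y):=(y_1,\ldots,y_{m+1})$ and $r(y):=\|u(y)\|$, we have $e_{n,m}(y)=u(y)/r(y)$. After expansion and discarding odd-moment terms, the only surviving contribution is
\[
\int_{\Sp^n}\!\!\int_{\Sp^n}\langle e_{n,m}(y),e_{n,m}(y')\rangle\langle y,y'\rangle\,\mu_n\mu_n
=\sum_{i=1}^{m+1}\sum_{j=1}^{n+1}\left(\int_{\Sp^n}\tfrac{y_iy_j}{r(y)}\,\mu_n(dy)\right)^2,
\]
where I used that the double integral factors and Fubini. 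By rotational symmetry in the coordinates $y_1,\ldots,y_{n+1}$, only diagonal terms $i=j$ with $1\le j\le m+1$ survive, and they are all equal; using $\sum_{i=1}^{m+1}y_i^2=r(y)^2$ together with symmetry among $y_1,\ldots,y_{m+1}$, each such integral equals $\tfrac{1}{m+1}\int_{\Sp^n} r(y)\,\mu_n(dy)$. Hence the whole cross integral equals $\tfrac{1}{m+1}\bigl(\int_{\Sp^n} r(y)\,\mu_n(dy)\bigr)^2$.

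\textbf{Step 4 (the Gamma factor).} The main obstacle is the closed-form evaluation of $A:=\int_{\Sp^n}r(y)\,\mu_n(dy)=\EE\sqrt{Y_1^2+\cdots+Y_{m+1}^2}$ when $Y$ is uniform on $\Sp^n$. I would use the standard Gaussian representation $Y=Z/\|Z\|$ with $Z\sim N(0,I_{n+1})$: then $(Y_1^2+\cdots+Y_{m+1}^2)$ has the $\mathrm{Beta}\!\left(\tfrac{m+1}{2},\tfrac{n-m}{2}\right)$ distribution, and a Beta integral gives
\[
A=\frac{B\!\left(\tfrac{m+2}{2},\tfrac{n-m}{2}\right)}{B\!\left(\tfrac{m+1}{2},\tfrac{n-m}{2}\right)}
=\frac{\Gamma\!\left(\tfrac{m+2}{2}\right)\Gamma\!\left(\tfrac{n+1}{2}\right)}{\Gamma\!\left(\tfrac{m+1}{2}\right)\Gamma\!\left(\tfrac{n+2}{2}\right)}.
\]
Collecting all terms,
\[
(\disft(\gamma_{m,n}))^4=\bigl(4+\tfrac{4}{m+1}\bigr)+\bigl(4+\tfrac{4}{n+1}\bigr)-2\bigl(4+\tfrac{4A^2}{m+1}\bigr)=\tfrac{4}{m+1}+\tfrac{4}{n+1}-\tfrac{8A^2}{m+1},
\]
and taking the fourth root yields the claimed formula. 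The computationally delicate step is Step~4, but everything else is a mechanical bookkeeping of which monomials in the inner products survive after integration against the rotationally invariant measure.
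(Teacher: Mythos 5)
Your proposal is correct and takes essentially the same route as the paper's proof: unfold the $(4,2)$-distortion into marginal and cross terms, rewrite the cross term via $\langle e_{n,m}(y),e_{n,m}(y')\rangle\langle y,y'\rangle$, factor it coordinate-wise into $\sum_{i,j}\bigl(\int y_iy_j/r(y)\,\mu_n\bigr)^2$, kill off-diagonal terms by a sign-flip symmetry, and evaluate $\int\|y_A\|\,\mu_n$ via the $\mathrm{Beta}\bigl(\tfrac{m+1}{2},\tfrac{n-m}{2}\bigr)$ representation coming from the Gaussian construction of $\mu_n$. The only cosmetic difference is that you organize the initial expansion through $\diam_4$ and the constant $4$'s that subsequently cancel, whereas the paper simplifies $\|x-x'\|^2-\|y-y'\|^2=2\langle y,y'\rangle-2\langle x,x'\rangle$ directly (eq.~\eqref{eq:eu-dis}), which avoids the cancellation bookkeeping.
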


We defer the proof of this lemma to Section~\ref{subsec:pf-lems}. We now present the main result of this section which shows the optimality of  the equatorial map:

\mainthm*

\begin{rmk}\label{rmk:gh_v_gw}
The fact that the equatorial coupling is optimal for $\dgwft$ is in sharp contrast to what takes place at the level of the closely related Gromov-Hausdorff distance, where  cognates of the equatorial coupling are far from being optimal \cite{lim2023gromov}.
\end{rmk}

\begin{rmk}[$m=0$] \label{rmk:m=0}
In this interesting special case, Theorem~\ref{thm:equ-opti} implies:
\begin{align*}
\dgwft(\Sp^{0}_E,\Sp^{m+1}_E)
=&~\dfrac{1}{\sqrt{2}}
\left[
\dfrac{n+2}{n+1}
-\dfrac{2}{\pi}
\left(
\dfrac{\Gamma(\frac{n+1}{2})}{\Gamma(\frac{n+2}{2})}
\right)^2
\right]^{1/4}.
\end{align*}
In particular,
\[
\dgwft(\Sp^{0}_E,\Sp^1_E)
=\dfrac{1}{\sqrt{2}}
\left(
\dfrac{3}{2}
-\dfrac{8}{\pi^2}
\right)^{1/4}
\approx 0.644
\,\,
\text{ and }
\,\,
\dgwft(\Sp^{0}_E,\Sp^2_E)
=\dfrac{1}{\sqrt{2}}
\left(
\dfrac{5}{6}
\right)^{1/4}\approx 0.676.
\]
\end{rmk}

\begin{rmk}[$n=m+1$] \label{rmk:ud}
In this interesting special case, Theorem~\ref{thm:equ-opti} implies:
\begin{align*}
\dgwft(\Sp^{m}_E,\Sp^{m+1}_E)
=&~\dfrac{1}{\sqrt{2}}
\left[
\dfrac{2m+3}{(m+1)(m+2)}
-\left(\dfrac{2}{m+1}\right)^3
\left(
\dfrac{\Gamma(\frac{m+2}{2})}{\Gamma(\frac{m+1}{2})}
\right)^4
\right]^{1/4}.
\end{align*}
In particular,
\[
\dgwft(\Sp^{0}_E,\Sp^1_E)
=\dfrac{1}{\sqrt{2}}
\left(
\dfrac{3}{2}
-\dfrac{8}{\pi^2}
\right)^{1/4}
\approx 0.644
\,\,
,
\,\,
\dgwft(\Sp^{1}_E,\Sp^2_E)
=\dfrac{1}{\sqrt{2}}
\left(
\dfrac{5}{6}
-\dfrac{\pi^2}{16}
\right)^{1/4}\approx 0.482.
\]
and
\[
\dgwft(\Sp^{2}_E,\Sp^3_E)
=
\dfrac{1}{\sqrt{2}}
\left(
\dfrac{7}{12}
-\dfrac{8}{27}
\left(\dfrac{16}{\pi^2}\right)
\right)^{1/4}
\approx
0.400.
\]
\end{rmk}

\begin{rmk}[$n=m+2$]\label{rmk:neqmp2} In some cases, it is possible to simplify the formula given by Theorem~\ref{thm:equ-opti}. For example, when $n=m+2$, the quantity in Theorem~\ref{thm:equ-opti} simplifies to the following \emph{explicit} formula:
$$
\dgwft(\Sp^{m}_E,\Sp^{m+2}_E)
=
\dfrac{1}{2^{1/4}}
\left[
\dfrac{1}{(m+1)(m+3)}
+
\dfrac{1}{(m+2)^2(m+3)}
\right]^{\frac{1}{4}}
.
$$
This implies that $\dgwft(\Sp^{m+2}_E,\Sp^m_E)=O(m^{-{1/2}})$ as $m\to \infty$. We compute some exact values below:
$$
\dgwft(\Sp^{0}_E,\Sp^2_E)=\left(\dfrac{5}{24}\right)^{1/4}\approx 0.676
\,\,
\text{;}
\,\,
\dgwft(\Sp^{1}_E,\Sp^3_E)=\left(\dfrac{11}{144}\right)^{1/4}\approx 0.526.
$$
\end{rmk}

\begin{rmk}[Asymptotics for large $m$ and $n$]\label{rem:asympt-n-m} It is clear from Theorem~\ref{thm:equ-opti} that $\dgwft(\Spe^m,\Spe^n)\to 0$ as $m,n\to\infty$. More precisely, note that by Stirling approximation, we have
$$
\dfrac{\Gamma(\frac{m+2}{2})}{\Gamma(\frac{m+1}{2})}=\sqrt{\dfrac{m+1}{2}}(1+O(m^{-1}));
\,\,\dfrac{\Gamma(\frac{n+1}{2})}{\Gamma(\frac{n+2}{2})}
=\sqrt{\dfrac{2}{n+1}}(1+O(n^{-1})),
$$
which implies that
\begin{align*}
    \dgwft(\Spe^m,\Spe^n)
    =&~\dfrac{1}{\sqrt{2}}
    \left[
    \dfrac{1}{m+1}-\dfrac{1}{n+1}
    +O(m^{-2})
    \right]^{1/4}\\
    =&~\dfrac{1}{\sqrt{2}}
    \left[
    \dfrac{n-m}{(m+1)(n+1)}
    +O(m^{-2})
    \right]^{1/4},
\end{align*}
as $m\to\infty$. Thus, if $n-m=O(1)$, we have $\dgwft(\Spe^m,\Spe^n)=O(m^{-1/2})$ as $m\to\infty$.
\end{rmk}

\begin{rmk}[Asymptotics for fixed $m$, large $n$]\label{rem:asympt-fixed-m} As above, for large $k$, Stirling approximation yields 
$$
\dfrac{\Gamma\left(\frac{m+k+1}{2}\right)}
{\Gamma\left(\frac{m+k+2}{2}\right)}
=\sqrt{\dfrac{2}{m+k+1}}\left(1+O(k^{-1})\right).
$$
Theorem~\ref{thm:equ-opti} then implies that
$$
\big(\dgwft(\Spe^m,\Spe^{m+k})\big)^4
-\dfrac{1}{4(m+1)}
=\dfrac{1}{4(m+k+1)}
\left[1-\dfrac{4}{m+1}
\left(
\dfrac{\Gamma(\frac{m+2}{2})}{\Gamma(\frac{m+1}{2})}
\right)^2
\right]
+O(k^{-2}).
$$
Thus, for a fixed $m$ we have
$$
\dgwft(\Spe^m,\Spe^{m+k})
=
\dfrac{1}{\sqrt{2}}\left[\dfrac{1}{m+1}\right]^{1/4}
+O(k^{-1/4})
$$
as $k\to\infty$.

\end{rmk}

\subsection{The proof of Theorem~\ref{thm:equ-opti}.}
We divide the proof into  several steps.

\subsubsection{Preliminaries.} \label{sec:prelimn-proof}
By property of Euclidean distances, we have
\begin{align*}
    \|x-x'\|^2
    =&~2\left(
    1-\langle x,x'\rangle\right)
    \quad
    \text{and}
    \quad
    \|y-y'\|^2
    =~2\left(
    1-\langle y,y'\rangle\right).
\end{align*}
Consider any coupling $\gamma\in\mathcal{M}(\mu_m,\mu_n)$. By the definition of distortion from Equation~\eqref{eq:def-distort}, when $d_X$ and $d_Y$ are both the Euclidean distances, one has 
\begin{align*}\label{eq:eu-dis}
    \dis^4_{4,2}(\gamma) 
    =&~  
    \int_{\Sp^m\times \Sp^n}\int_{\Sp^m\times\Sp^n} 
    (\|x-x'\|^2
    -\|y-y'\|^{2} )^2
    \ \gamma(dx \times dy) \ \gamma(dx' \times dy') \\
    =&~
     \int_{\Sp^m\times\Sp^n}\int_{\Sp^m\times\Sp^n} 
    | 2\langle x, x'\rangle 
    -2\langle y, y'\rangle|^{2} 
    \ \gamma(dx \times dy) \ \gamma(dx' \times dy')\\
    =&~
    4\iint_{\Sp^m\times\Sp^m}\langle x, x'\rangle^2 \ \mu_m(dx) \ \mu_m(dx')+4\iint_{\Sp^n\times\Sp^n}\langle y, y'\rangle^2 \ \mu_n(dy) \ \mu_n(dy')\\
    &\quad-8\int_{\Sp^m\times\Sp^n}\int_{\Sp^m\times\Sp^n}\langle x, x'\rangle\langle y, y'\rangle \ \gamma(dx \times dy) \ \gamma(dx' \times dy').\numberthis
\end{align*}

Thus for any coupling $\gamma\in\mathcal{M}(\mu_m,\mu_n)$ we have
\begin{equation}\label{eq:eu-dis-2}
    \dis^4_{4,2}(\gamma)
    =4\iint
    \langle x, x'\rangle^2 \ \mu_m(dx) \ \mu_m(dx')+4\iint_{\Sp^n\times\Sp^n}\langle y, y'\rangle^2 \ \mu_n(dy) \ \mu_n(dy')-8J(\gamma)
\end{equation}
where we define
\begin{equation}\label{eq:defJmu}
J(\gamma):=
\int
\int
\langle x, x'\rangle\langle y, y'\rangle 
\ \gamma(dx \times dy) \ \gamma(dx' \times dy').
\end{equation}
Since the first two terms on the right hand side of \eqref{eq:eu-dis-2} do not depend on the coupling $\gamma$, we have the following equivalent optimization problem: 
$$ 
\gamma 
\text{ minimizes}
\,\, \dis_{4,2}^4(\gamma)
\Leftrightarrow
\gamma
\text{ maximizes}
\,\, J(\gamma) 
$$
where both optimizations are over the space of couplings $\gamma\in\mathcal{M}(\mu_m,\mu_n)$. In the rest of the proof we therefore focus on \emph{maximizing} $J(\gamma)$.

\subsubsection{A change of coordinates}\label{sec:change-of-coords} 
In this section we prove a lemma which permits simplifying the functional $J$ defined above. See the discussion on page \pageref{page:change-coords}.

 \begin{lem}\label{lem:change-coords} Let $\alpha \in \mathcal{P}(\bbR^{m+1})$ and $\beta  \in \mathcal{P}(\bbR^{n+1})$ where $n\geq m$ be two rotationally invariant measures with barycenters coinciding with the respective origins. Consider the functional $J:\mathcal{M}(\alpha,\beta)\to \bbR$ defined above. Then, 
 $$\max_{\gamma\in \mathcal{M}(\alpha,\beta)} J(\gamma) = \max_{\gamma\in \mathcal{M}(\alpha,\beta)} D(\gamma) $$
 where 
 $$ D(\gamma) := 
\sum_{k=1}^{m+1}
\left[\int 
x_k y_k \
\gamma (dx \times dy) \right]^2 .$$
 \end{lem}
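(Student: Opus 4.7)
My plan is to reformulate both $J$ and $D$ in terms of the matrix of mixed second moments of $\gamma$, and then diagonalize this matrix via a rotation-based change of coordinates that preserves the marginals. For $\gamma \in \mathcal{M}(\alpha,\beta)$, set
$$M_\gamma := \int xy^{\top}\,\gamma(dx \times dy) \in \bbR^{(m+1)\times(n+1)}.$$
Expanding the inner products appearing in the integrand of $J$ and applying Fubini yields
$$J(\gamma) \;=\; \sum_{i=1}^{m+1}\sum_{j=1}^{n+1}\left[\int x_i y_j \,\gamma(dx\times dy)\right]^{2} \;=\; \|M_\gamma\|_{\rm F}^{2},$$
whereas, by its very definition, $D(\gamma)$ is the sum of squared diagonal entries of $M_\gamma$; since $n \geq m$, the indices $k = 1,\dots,m+1$ enumerate all diagonal entries of the (non-square) matrix $M_\gamma$. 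In particular $D(\gamma) \leq J(\gamma)$ for every coupling, which gives immediately $\max D \leq \max J$.

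For the reverse inequality, the heart of the argument is to apply the singular value decomposition to $M_\gamma$: write $M_\gamma = U\,\Sigma\,V^{\top}$ with $U \in O(m+1)$, $V \in O(n+1)$ orthogonal, and $\Sigma \in \bbR^{(m+1)\times(n+1)}$ ``diagonal'' with singular values $\sigma_1,\dots,\sigma_{m+1} \geq 0$ on its main diagonal and zeros elsewhere. Define the pushforward coupling
$$\tilde\gamma := (U^{\top},\,V^{\top})_{\#}\gamma.$$
Since $\alpha$ is invariant under $O(m+1)$ and $\beta$ under $O(n+1)$, one has $(U^{\top})_{\#}\alpha = \alpha$ and $(V^{\top})_{\#}\beta = \beta$, so the marginals of $\tilde\gamma$ are $\alpha$ and $\beta$ and hence $\tilde\gamma \in \mathcal{M}(\alpha,\beta)$. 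A direct change-of-variables computation then gives
$$M_{\tilde\gamma} \;=\; \int (U^{\top}x)(V^{\top}y)^{\top}\,\gamma(dx\times dy) \;=\; U^{\top} M_\gamma V \;=\; \Sigma,$$
which has all off-diagonal entries equal to zero. Therefore
$$D(\tilde\gamma) \;=\; \sum_{k=1}^{m+1}\sigma_k^{2} \;=\; \|M_\gamma\|_{\rm F}^{2} \;=\; J(\gamma),$$
producing, for every $\gamma$, a coupling $\tilde\gamma \in \mathcal{M}(\alpha,\beta)$ satisfying $D(\tilde\gamma) \geq J(\gamma)$. Hence $\max D \geq \max J$, and combined with the previous inequality this yields the claimed equality.

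The substantive content is concentrated in the SVD-based rotation trick of the second paragraph; the single thing that has to be verified with care is that $\tilde\gamma$ is a valid coupling, which reduces exactly to the rotational invariance hypothesis on $\alpha$ and $\beta$. The barycenter assumption is automatic given rotational invariance together with integrability of the first moment (the mean is fixed by every rotation, so it must be the origin), and is in any case satisfied for the uniform measures on $\Sp^m$ and $\Sp^n$ to which the lemma will be applied. Existence of maximizers, implicit in the statement, follows from weak compactness of $\mathcal{M}(\alpha,\beta)$ under the hypotheses at hand together with the weak continuity of $J$ and $D$, both of which are quadratic in $\gamma$ against continuous, bounded test functions after applying Fubini.
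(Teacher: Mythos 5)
Your proof is correct and follows essentially the same SVD-plus-rotation argument as the paper: identify $J(\gamma)=\|M_\gamma\|_{\rm F}^2$, use the singular value decomposition of $M_\gamma$ to build orthogonal matrices, and push $\gamma$ forward by the associated rotations (valid precisely because of the rotational invariance of $\alpha$ and $\beta$) to diagonalize $M_\gamma$ without changing $J$. The only cosmetic differences are that you use the full SVD with square $U,V$ rather than completing a semi-orthogonal $Q_\gamma$ by hand, verify $J(\gamma)=\|M_\gamma\|_{\rm F}^2$ by coordinate expansion rather than by trace identities, and make the two inequalities $\max D\le\max J$ and $\max D\ge\max J$ explicit (the latter being the substantive one); you also rightly note that the barycenter hypothesis is implied by rotational invariance plus integrability.
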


\begin{proof}
Applying the  linearity of the integral and the identity  ${\rm trace}(AB)={\rm trace}(BA)$ for conformable matrices\footnote{I.e. the matrices can be multiplied.} $A,B$, we  compute,
\begin{align*}\label{eq:Jmu-Fnorm}
J(\gamma)
=&~
\int  
\int 
\langle x,x'\rangle
\langle y,y'\rangle \
\gamma (dx \times dy) \ \gamma (dx' \times dy') \\
=&~
\int  
\int 
{\rm  tr}\left[
(x')^{\top}xy^{\top}y' 
\right] \
\gamma (dx \times dy) \ \gamma (dx' \times dy') \\
=&~
\int
\int 
{\rm tr}\left[ 
x y^{\top}y'(x')^{\top}
\right]
\gamma (dx \times dy) \ \gamma (dx' \times dy') \\
=&~
{\rm tr}
\left[
\int   
\int 
xy^\top 
(x'(y')^{\top})^\top \
\gamma (dx \times dy) \gamma (dx' \times dy')
\right]\\
=&~
{\rm tr}
\left[
\int  
xy^\top \
\gamma(dx \times dy)
\left(
\int  
x' (y')^\top \
\gamma (dx' \times dy')
\right)^\top
\right] \\
=&~
\Bigg\|
\underbrace{
\int  
xy^\top
\ \gamma(dx \times dy)}_{M_\gamma}
\Bigg\|_{\rm F}^2 
\end{align*}

We now manipulate the matrix $M_\gamma\in \bbR^{(m+1)\times (n+1)}$ in order to simplify the optimization problem; see the discussion about related work on page \pageref{sec:rel-wk}. Hereon, we write $\II_d$ to mean the identity matrix of size $d$.

Consider the (possibly non-unique) \emph{singular value decomposition} $$ M_{\gamma}=P_{\gamma}\Delta_{\gamma}Q_{\gamma}^{\top}
$$ where
\begin{itemize}
\item $\Delta_{\gamma}\in \bbR^{(m+1)\times (m+1)}$ is a diagonal matrix containing the singular values of $M_\gamma$,
\item $P_\gamma\in \bbR^{(m+1)\times (m+1)}$ and $Q_\gamma\in \bbR^{(n+1)\times (m+1)}$ satisfy
$P_\gamma^{\top}P_\gamma
=P_\gamma P_\gamma^{\top}=\II_{m+1}
$
and
\item 
$Q_\gamma^{\top}Q_\gamma=\II_{m+1}$, i.e. $P_\gamma$ is orthonormal and $Q_\gamma$ is semi-orthonormal. 
\end{itemize}
We now define
$$
U_{\gamma}:=P_{\gamma}^{\top}\in \bbR^{(m+1)\times (m+1)}
\text{ and }
V_{\gamma}:=\begin{pmatrix}
Q_{\gamma} &
Q_{\gamma}^{\perp}
\end{pmatrix}^{\top}\in \bbR^{(n+1)\times(n+1)}
$$
where $Q_{\gamma}^{\perp}\in \bbR^{(n+1)\times(n-m)}$ is any semi-orthornormal matrix, i.e., $(Q_{\gamma}^{\perp})^{\top}Q_{\gamma}^{\perp}=\II_{n-m}$, which also satisfies $Q_{\gamma}^{\top}Q_{\gamma}^{\perp}=\mathbf{0}_{(m+1)\times(n-m)}$. Note that by construction $U_\gamma$ and $V_\gamma$ are orthonormal, i.e., $U_\gamma^{\top}U_\gamma=U_\gamma U_\gamma^{\top}=\II_{m+1}$ and $V_\gamma^{\top}V_\gamma=V_\gamma V_\gamma^{\top}=\II_{n+1}$.

Recall that the marginals of $\gamma$ are  $\mu_m$ and $\mu_n$. Let $U_{\gamma}\in\bbR^{(m+1)\times (m+1)}$ and $V_{\gamma}\in\bbR^{(n+1)\times (n+1)}$ be the two orthonormal matrices  defined above and $T_{U_{\gamma}}$ and $T_{V_{\gamma}}$ be the linear maps they induce by left multiplication (i.e. $T_{U_{\gamma}} : \mathbb{R}^{m + 1} \to \mathbb{R}^{m+1}$ is defined as $v \mapsto U_{\gamma}v$, with $T_{V_{\gamma}}$ defined similarly).
By the assumed symmetry of the measures $\alpha$ and $\beta$, we may pushforward $\gamma$ through the associated maps and  still obtain a coupling between $\alpha$ and $\beta$, that is:
\[
(T_{U_{\gamma}}, T_{V_{\gamma}})_{\#} \gamma \in \mathcal{M}(\alpha,\beta).
\]

Now, we define 
$\Tilde{\gamma} := (T_{U_{\gamma}}, T_{V_{\gamma}})_{\#} \gamma$, and see that
\begin{align*}
M_{\Tilde{\gamma}}=&~
\int 
x y^\top \
\Tilde{\gamma} (dx \times dy)\\
=&~
U_\gamma
\left(\int 
 x y^\top 
\gamma (dx \times dy)\right)
V_\gamma^{\top}\\
=&~
U_\gamma
M_\gamma
V_\gamma^{\top}\\
=&
\begin{pmatrix}
\Delta_\gamma &
\mathbf{0}_{(m+1)\times (n-m)}
\end{pmatrix}.
\end{align*}

Since the Frobenius norm of a matrix is simply the Euclidean norm of its singular values,
\[
J(\gamma)=
\left\|
M_\gamma
\right\|_{\rm F}^2
=\left\| \Delta_\gamma \right\|_{\rm F}^2
=\left\|
M_{\Tilde{\gamma}}
\right\|_{\rm F}^2
=J(\tilde{\gamma}).
\]
That is, for any optimal coupling $\gamma$, there exists another optimal coupling $\Tilde{\gamma}$ for which $M_{\Tilde{\gamma}}$ is of the form $\begin{pmatrix}
\Delta_\gamma &
\mathbf{0}_{(m+1)\times (n-m)}
\end{pmatrix}$ for a diagonal matrix $\Delta_{\gamma}\in \bbR^{(m+1)\times(m+1)}$. 

We can then write:
\begin{align*}
J(\gamma)=J(\Tilde{\gamma})
=\left\|
M_{\Tilde{\gamma}}
\right\|_{\rm F}^2
=\sum_{k,l}\left[
\int 
x_k y_l \
\Tilde{\gamma} (dx \times dy)
\right]^2
=\sum_{k=1}^{m+1}\left[
\int 
x_k y_k \
\Tilde{\gamma} (dx \times dy)
\right]^2
\end{align*}
where the last equality follows since $M_{\Tilde{\gamma}}=\begin{pmatrix}
\Delta_\gamma &
\mathbf{0}_{(m+1)\times (n-m)}
\end{pmatrix}$ for a diagonal matrix $\Delta_{\gamma}\in \bbR^{(m+1)\times(m+1)}$.
Thus, maximizing $J(\gamma)$ is equivalent to: 
\begin{equation}\label{eq:maxJ-diag}
\text{maximize }
D(\gamma) = 
\sum_{k=1}^{m+1}
\left[\int 
x_k y_k \
\gamma (dx \times dy) \right]^2
\text{ over }
\gamma\in\mathcal{M}(\alpha,\beta).
\end{equation}
That is, any optimizer of $D$ is an optimizer of $J$ and any optimizer of $J$ can be pushed forward via a rotation to an optimizer of $D$. 
\end{proof}

\subsubsection{Optimizing $J(\gamma)$ over couplings between standard Gaussians.} \label{sec:gaussian}

In this section, we focus on optimizing $J(\gamma)$ for standard Gaussian marginals. Despite the close connections between the standard Gaussian measure and the uniform measure on the sphere, there are fundamental differences among the two in terms of optimizing $J(\gamma)$ over all possible couplings. In particular, the techniques of this section will be, strictly speaking, applicable only to Gaussian measures and not to the uniform measures. \textbf{The general method of proof will nevertheless pave the way for our proof of Theorem \ref{thm:equ-opti}}; see Remark \ref{rmk:non-tight}.

We will use the notation $\eta_{d}$ to denote the standard Gaussian measure on $\bbR^{d}$ (so that $\eta_1$ will denote the standard Gaussian measure on $\bbR)$. In the usual notation for Normal distributions $\eta_{d} = N(\mathbf{0},\II_{d})$. 

Using the projection map $\pi_{n+1,m+1}$ from Definition \ref{def:eq-map}, we  define the following  coupling between standard Gaussians:

\begin{defn}\label{def:proj-coupling}
    For all $n\ge m$, the projection map $\pi_{n+1,m+1}:\bbR^{n+1}\to\bbR^{m+1}$ induces a coupling $\gamma^{\rm gauss}_{m+1,n+1}\in\mathcal{M}(\eta_{m+1},\eta_{n+1})$  given by:
    \[
    \gamma^{\rm gauss}_{m+1,n+1}:= (\pi_{n+1,m+1}, \mathrm{id}_{\bbR^{n+1}})_\#\eta_{n+1}.
    \]
 \end{defn}

\begin{rmk}\label{rem:relation-couplings}
Note that we can recover the equatorial coupling from (\ref{eq:eq-coupling}) as follows $\gamma_{m,n} = (f_{m+1},f_{n+1})_\# \gamma_{m+1,n+1}^\mathrm{gauss}$ where $f_{m+1}:\bbR^{m+1}\backslash \{\textbf{0}\}\to \Sp^m$ is the central projection map: $x\mapsto \tfrac{x}{\|x\|}$. 
\end{rmk}

When the marginals of $\gamma$ are standard Gaussian measures $\eta_{m+1}$ and $\eta_{n+1}$, the optimization problem in \eqref{eq:maxJ-diag} can be solved by relaxing the optimization into an optimization over the coordinate wise pushforwards of $\gamma$. This leads to the following proposition.

\begin{prop}\label{prop:gaussian}
    Suppose $n\geq m$. Then,
    \[
    \max_{\gamma\in \mathcal{M}(\eta_{m+1},\eta_{n+1})}J(\gamma)=J(\gamma^{\rm gauss}_{m+1,n+1})=m+1.
    \]

\end{prop}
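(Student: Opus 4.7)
The plan is to combine Lemma~\ref{lem:change-coords} with a coordinatewise Cauchy--Schwarz bound, and then exhibit $\gamma^{\rm gauss}_{m+1,n+1}$ as an explicit maximizer.

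First, I would verify that the hypotheses of Lemma~\ref{lem:change-coords} are met: the standard Gaussians $\eta_{m+1}$ and $\eta_{n+1}$ are rotationally invariant with barycenter at the origin. Thus, passing from $J$ to $D$, it suffices to prove
\begin{equation*}
\max_{\gamma \in \mathcal{M}(\eta_{m+1},\eta_{n+1})} D(\gamma) = m+1.
\end{equation*}

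For the upper bound, I would apply the Cauchy--Schwarz inequality to each integral $\int x_k y_k\,\gamma(dx\times dy)$. Since the marginals of $\gamma$ are $\eta_{m+1}$ and $\eta_{n+1}$, and each coordinate of a standard Gaussian has variance $1$, for every $k=1,\dots,m+1$,
\begin{equation*}
\left(\int x_k y_k\, \gamma(dx\times dy)\right)^2
\le \left(\int x_k^2\, \eta_{m+1}(dx)\right)\left(\int y_k^2\, \eta_{n+1}(dy)\right) = 1\cdot 1 = 1.
\end{equation*}
Summing over $k=1,\dots,m+1$ yields $D(\gamma)\le m+1$.

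For the matching lower bound, I would compute $J(\gamma^{\rm gauss}_{m+1,n+1})$ directly. Under this coupling, $y\sim\eta_{n+1}$ and $x=\pi_{n+1,m+1}(y)=(y_1,\dots,y_{m+1})$, so $\langle x,x'\rangle=\sum_{k=1}^{m+1} y_k y_k'$. Expanding,
\begin{equation*}
J(\gamma^{\rm gauss}_{m+1,n+1})
= \sum_{k=1}^{m+1}\sum_{j=1}^{n+1}
\left(\int y_k y_j\, \eta_{n+1}(dy)\right)
\left(\int y_k' y_j'\, \eta_{n+1}(dy')\right)
= \sum_{k=1}^{m+1}\sum_{j=1}^{n+1}\delta_{kj}^2 = m+1,
\end{equation*}
where I used independence of the Gaussian coordinates and unit variance. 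Combined with $\max J=\max D\le m+1$ from the previous step, this pins down the maximum and identifies $\gamma^{\rm gauss}_{m+1,n+1}$ as a maximizer.

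No step here is a serious obstacle; the proposition is essentially a clean application of Cauchy--Schwarz after the coordinate reduction of Lemma~\ref{lem:change-coords}. The conceptual content is really that standard Gaussians are tensor products of their one-dimensional marginals, which is exactly what allows the coordinatewise bound to be simultaneously saturated --- a feature that, as flagged in the text leading into Proposition~\ref{prop:gaussian} and Remark~\ref{rmk:non-tight}, will fail for the uniform measure on spheres and will require the more delicate disintegration argument in the proof of Theorem~\ref{thm:equ-opti}.
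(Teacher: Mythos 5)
Your proof is correct and follows essentially the same strategy as the paper: reduce $J$ to $D$ via Lemma~\ref{lem:change-coords}, bound $D$ coordinatewise by Cauchy--Schwarz, and match the bound by evaluating on $\gamma^{\rm gauss}_{m+1,n+1}$. The only cosmetic difference is that the paper routes the Cauchy--Schwarz step through a pushforward to one-dimensional couplings (citing \cite[Lemma 3.2]{chowdhury2019gromov}) before applying it, whereas you apply it directly in $L^2(\gamma)$ and read off the marginal second moments, which is a mild streamlining of the same inequality.
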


\begin{rmk}
     Note that \cite[Proposition 4.1]{salmona2021gromov} gives a more general claim than Proposition \ref{prop:gaussian} and consequently requires a much more sophisticated method of proof (cf. \cite[Lemma 3.2]{salmona2021gromov}). 
\end{rmk}

\begin{proof}[Proof of Proposition~\ref{prop:gaussian}]
    By Lemma \ref{lem:change-coords}, we can equivalently maximize the functional $D(\gamma)=\displaystyle\sum_{k=1}^{m+1}\left[\int x_k\,y_k\gamma(dx\times dy)\right]^2$ over all couplings. To proceed with this, write
\begin{align}
    \sup_{\gamma\in\mathcal{M}(\eta_{m+1},\eta_{n+1})} D(\gamma)
    \le&~\sum_{k=1}^{m+1}\sup_{\gamma\in\mathcal{M}(\eta_{m+1},\eta_{n+1})}
    \left[\int
    x_k y_k \
    \gamma (dx \times dy) \right]^2\label{eq:non-tight}\\
    =&~ (m+1)
    \sup_{\gamma\in\mathcal{M}(\eta_{m+1},\eta_{n+1})}
    \left[\int
    x_1 y_1 \
    \gamma (dx \times dy) \right]^2.\label{eq:gaussian-upbd}
    \end{align}
Since the optimization on the right hand side depends only on $x_1,y_1$, one can then optimize over the first coordinate pushforwards of $\gamma$. To be more precise, if $\varphi_1^{m+1}:\mathbb{R}^{m+1}\to \mathbb{R}$ and $\varphi_1^{n+1}:\mathbb{R}^{n+1}\to \mathbb{R}$ denote the respective projections onto the first coordinate, consider $\gamma_1:=(\varphi_{1}^{m+1},\varphi_{1}^{n+1})_{\#}\gamma$. It follows by Lemma 3.2 of \cite{chowdhury2019gromov} that $$(\varphi_{1}^{m+1},\varphi_{1}^{n+1})_{\#}(\mathcal{M}(\eta_{m+1},\eta_{n+1}))=\mathcal{M}((\varphi_{1}^{m+1})_{\#}(\eta_{m+1}),(\varphi_{1}^{n+1})_{\#}(\eta_{n+1})) = \mathcal{M}(\eta_1,\eta_1),$$ where the last step follows from the fact that the pushforward through a (one dimensional) coordinate projection of the standard Normal in $\bbR^d$ is a standard normal on $\bbR$.  
    Thus, the optimization on the right hand side above can be  equivalently written as
    \begin{align*}\label{eq:dgamma-ubd-gaussian-calc}
    \sup_{\gamma\in\mathcal{M}(\eta_{m+1},\eta_{n+1})}
    \left[\int_{\mathbb{R}^{m+1} \times \mathbb{R}^{n+1}}
    x_1 y_1 \
    \gamma (dx \times dy) \right]^2
    =&~
    \sup_{\gamma_1\in\mathcal{M}(\eta_1,\eta_1)}
    \left[\int_{\mathbb{R} \times \mathbb{R}}
    x_1 y_1 \
    \gamma_1 (dx_1 \times dy_1) \right]^2\\
    \le&~
    \left[
    \int_{\mathbb{R}}
    x_1^2\
    \eta_1(dx_1) 
    \right]
    \left[
    \int_{\mathbb{R}}
    y_1^2 \
    \eta_1(dy_1) 
    \right]\\
    =&~
    \left[
    \int_{\mathbb{R}}
    z^2
    \ 
    \dfrac{1}{\sqrt{2\pi}}\exp(-z^2/2)\,dz
    \right]^2=1. \numberthis
    \end{align*}
    The first inequality follows by applying the Cauchy-Schwarz inequality. The last equality uses the well-known computation of the second moment of a one dimensional standard Gaussian measure. Plugging \eqref{eq:dgamma-ubd-gaussian-calc} into \eqref{eq:gaussian-upbd} shows that
    \begin{equation}\label{eq:dgamma-upbd-gauss}
        \sup_{\gamma\in\mathcal{M}(\eta_{m+1},\eta_{n+1})} D(\gamma)\le (m+1).
    \end{equation}
    To finish the proof of Proposition~\ref{prop:gaussian} we now note that
    \begin{align*}
    \sup_{\gamma\in\mathcal{M}(\eta_{m+1},\eta_{n+1})} D(\gamma)
    \ge&~
    D(\gamma^{\rm gauss}_{m+1,n+1})\\
    =&~\sum_{k=1}^{m+1}
    \left[\int_{\bbR^{m+1}\times \bbR^{n+1}}
    x_k\,y_k \ \gamma^{\rm gauss}_{m+1,n+1}(dx\times dy)
    \right]\\
    =&~
    \sum_{k=1}^{m+1}
    \left[\int_{\bbR^{m+1}}
    x_k^2 \ \eta_{m}(dx)
    \right]\\
    =&~
    \sum_{k=1}^{m+1}
    \left[\int_{\bbR^{m+1}}
    x_k^2\ \dfrac{1}{(2\pi)^{(m+1)/2}}\exp\left(-\sum_{k=1}^{m+1}x_k^2/2\right)dx_1\,\ldots dx_{m+1}
    \right]\\
    =&~
    \sum_{k=1}^{m+1}
    \left[\int_{\bbR}
    x_k^2\, \dfrac{1}{\sqrt{2\pi}}\exp(-x_k^2/2) \, dx_k
    \right]=\sum_{k=1}^{m+1}1=(m+1).
    \end{align*}
    Here the first equality follows by definition of $D(\gamma)$, the second follows by the definition of $\gamma^{\rm gauss}_{m+1,n+1}$, the third one is due to the definition $\eta_{m+1}:=N(\mathbf{0},\II_{m+1})$, the fourth  and the fifth inequalities follow by standard Gaussian integral computations. Together with \eqref{eq:dgamma-upbd-gauss} this finishes the proof of Proposition~\ref{prop:gaussian}.
\end{proof}

\begin{rmk}\label{rmk:non-tight}
    The strategy used in Proposition \ref{prop:gaussian} for finding the optimal coupling for Gaussian marginals, does not, however, work for uniform measures on the sphere because the inequality in Equation~\eqref{eq:non-tight} is \emph{not tight for uniform measures on $\Sp^m$} whenever $m>0$. This is because whenever $m>0$, the uniform measure on $\Sp^m$ is not a product measure over its coordinates --- and thus the problem of maximizing $D(\gamma)$ cannot be solved by a coordinate wise approach: Optimizing in the first coordinate leads to constraints on the feasible set of the optimization in the second coordinate, and so on. Note that this is in contrast with the case where $\gamma$ has  standard Gaussian measures as marginals (considered in Section~\ref{sec:gaussian}), which are indeed product distributions.  See \Cref{q:gauss}.
 \end{rmk}

We now describe a variant of the  approach used in the proof of Proposition \ref{prop:gaussian} which takes into consideration the required dependence between the coordinates.

\subsubsection{The conclusion of the proof of Theorem~\ref{thm:equ-opti}.}

Recall from the calculations in Section \ref{sec:prelimn-proof} that minimizing $\dis_{4,2}$ over all couplings between the uniform measures $\mu_m$ and $\mu_n$ leads to maximizing the functional $J$ defined in (\ref{eq:defJmu}). Note that the uniform measures on spheres $\mu_m\in\mathcal{P}(\Sp^m)$ and $\mu_n\in\mathcal{P}(\Sp^n)$  are elements of $\mathcal{P}(\bbR^{m+1})$ and $\mathcal{P}(\bbR^{n+1})$, respectively. Hence, we can invoke Lemma \ref{lem:change-coords}  and equivalently maximize the functional $D$ over all such couplings. Before tackling this, we need some preparations.

\smallskip
To simplify subsequent computations, we define the projections,
$y=(y_A,y_B)^{\top}
\text{ where }
y_A\in\bbR^{m+1},\ \text{ and }
y_B\in\bbR^{n-m}$.
Fixing $\gamma$, we use the projection definition to define the decomposition of $\Sp^n$ as a union of products of smaller spheres, 
\[
A_t := \{ y \in \Sp^n : \| y_A \| = t \} = \left( t \cdot \Sp^m \right) \times \left( \sqrt{1 - t^2} \cdot \Sp^{n - m-1} \right),\,\,\mbox{$t\in[0,1]$}
\]
so that $\bigcup_{t=0}^{1} A_t = \Sp^n$. 
Let the measure $\nu \in \mathcal{P}([0,1])$ be the pushforward of $\gamma$ by $(x, y) \mapsto \|y_A\|$. Then, by the Disintegration Theorem \cite[Theorem 5.3.1]{ambrosio2005gradient}, there is a measure-valued map $t\mapsto\gamma_t$ from $[0,1]$ to $\mathcal{P}(\Sp^m\times\Sp^n)$ such that: 
\begin{enumerate}
\item[(1)] $t\mapsto\gamma_t(B)$ is measurable for all Borel set $B\subseteq\Sp^m\times\Sp^n$,  
\item[(2)] $\gamma=\int_0^1\gamma_t\,\nu(dt)$, and 
\item[(3)] $\supp[\gamma_t]\subseteq A_t \times \Sp^m$ (so we will view $\gamma_t$ as a probability measure on $A_t \times \Sp^m$ for each $t\in [0,1]$).
\end{enumerate}

Marginalizing this disintegration over its first factor, $\Sp^m$, we derive a disintegration of the uniform measure $\mu_n$ according to the map $y \mapsto \| y_A \|$, which we denote by $\overline{\gamma}_{t}$.
This new marginal disintegration is, in particular, defined such that for all Borel subsets $B \subseteq \Sp^n$, $\overline{\gamma}_{t} (B) := {\gamma}_t( B \times \Sp^m)$.
To check that this is indeed a disintegration, let $\varphi : \Sp^n \to \mathbb{R}$ be a measurable function, and then, since $\gamma$ marginalizes to $\mu_n$, we have 
\begin{equation}\label{eq:cond-exp}
\begin{split}
    \int_{\Sp^n} \varphi(y) \ \mu_n (dy) 
    &= \int_{\Sp^m \times \Sp^n} \ \varphi(y) \ \gamma (dx \times dy) \\
    &= \int_0^1 \int_{\Sp^m \times A_t} \varphi(y) \ {\gamma}_t(dx \times dy)\, \nu (dt) \\
    &= \int_0^1 \int_{A_t} \varphi(y) \ \overline{\gamma}_{t} (dy) \,\nu (dt).
\end{split}
\end{equation}
Since $\overline{\gamma}_{t}$ is a disintegration of $\mu_n$, it has a symmetry informed by the symmetry of $\mu_n$.
In particular, for any $\varphi : \Sp^n \to \mathbb{R}$ and any $U \in O(m + 1)$ and $V \in O(n-m)$,
\begin{align*}
\int_0^1 \int_{A_t} \varphi(y_A, y_B) \ ((T_U, T_V)_\# \overline{\gamma}_{t})(dy_A \times dy_B) \,\nu(dt)
&= \int_0^1 \int_{A_t} \varphi(U y_A, V y_B) \ \overline{\gamma}_{t}(dy_A \times dy_B) \,\nu(dt) \\
&= \int_{\Sp^n} \varphi(U y_A, V y_B) \ \mu_n(dy_A \times dy_B)  \\
&=\int_{\Sp^n} \varphi(y_A, y_B) \ (T_U,T_V)_\#\mu_n(dy_A \times dy_B)\\
&=
\int_{\Sp^n} \varphi(y_A, y_B) \ \mu_n(dy_A \times dy_B)  \\
&= \int_0^1 \int_{A_t} \varphi(y_A, y_B) \ \overline{\gamma}_{t}(dy_A \times dy_B) \,\nu(dt).
\end{align*}

So, for any $U \in O(m+1)$ and $V \in O(n-m)$, $(T_U, T_V)_\# \overline{\gamma}_{t} = \overline{\gamma}_{t}$ for almost every $t$.
The only probability measure on $A_t$ that satisfies these conditions is the product of uniform measures on both factors $y_A$ and $y_B$ (a.e.). 
Marginalizing over $y_B$, we denote the induced measure on $y_A$ as $\mu_{t \cdot \Sp^m}$. By the above argument $\mu_{t\cdot \Sp^m}$ is the uniform measure over $t\cdot\Sp^m$.

The disintegration described above allows for the  computation: 
\begin{align*}\label{eq:Dmu-upbd}
D(\gamma)
=&~
\sum_{k=1}^{m+1}
\left[\int_{\Sp^m \times \Sp^n}
x_k y_k \
\gamma (dx \times dy) \right]^2
\\ 
=&~
\sum_{k=1}^{m+1}
\left[
\int_0^1 t\cdot \int_{\Sp^m \times A_t}
\frac{x_k y_k}{t} \
{\gamma}_t (dx \times dy) \ \nu(dt) \right]^2
\\ 
\le&~
\sum_{k=1}^{m+1}
\left[
\int_0^1 t\cdot \left(
\int_{\Sp^m \times A_t}
\frac{y_k^2}{t^2} \ {\gamma}_t(dx \times dy) \right)^{1/2}
\cdot
\left(
\int_{\Sp^m \times A_t}
x_k^2 \ {\gamma}_t(dx \times dy) \right)^{1/2}
\ \nu(dt) \right]^2 \tag{$\ast$}
\\ 
=&~
\sum_{k=1}^{m+1}
\left[
\int_0^1 t\cdot \left(
\int_{\Sp^m}
y_k^2 \ \mu_m(dx) \right)^{1/2}
\cdot
\left(
\int_{\Sp^m \times A_t}
x_k^2 \ {\gamma}_t(dx \times dy) \right)^{1/2}
\ \nu(dt) \right]^2
\\ 
=&~
\frac{1}{m+1}
\sum_{k=1}^{m+1}
\left[
\int_0^1 t\cdot 
\left(
\int_{\Sp^m \times A_t}
x_k^2 \ {\gamma}_t(dx \times dy) \right)^{1/2}
 \ \nu(dt) \right]^2.
\end{align*}
The inequality is an application of the Cauchy-Schwarz inequality, and the subsequent equality is justified by the following computation:
\begin{equation}\label{eq:pf-equ-0}
\int_{\Sp^m \times A_t} \frac{y_k^2}{t^2} \ \mu_t (dx \times dy)
=
\int_{A_t} \frac{y_k^2}{t^2} \ \overline{\gamma}_t (dy)
=
\int_{t \cdot \Sp^m} \frac{y_k^2}{t^2}  \ \mu_{t \cdot \Sp^m}(dy_A)
=
\int_{\Sp^m} \tilde{y}_k^2 \ \mu_m (d\tilde{y}),
\end{equation}
where the first two equalities are given by integrating out $x$ and $y_B$ respectively. 
The final equality follows from a change of variables by the map $y \mapsto t^{-1} \cdot y$, using the fact that $\mu_{t\cdot\Sp^m}$ is a uniform measure on $t\cdot\Sp^m$.

We would like to pass the summation inside the square to apply the Cauchy-Schwarz inequality, so we will write out the squared integral as a product of integrals over independent variables:
\begin{align*}\label{eq:Dmu2}
&~D(\gamma) \\
&\le \frac{1}{m+1}
\sum_{k=1}^{m+1}
\left[
    \int_0^1 t\cdot 
    \left(
    \int_{\Sp^m \times A_t}
        x_k^2 \ \gamma_t(dx \times dy) \right)^{1/2} \hspace{-1em}
    \nu(dt) 
\right]^2\\
&=
\frac{1}{m+1}
\sum_{k=1}^{m+1}
\left[
    \int_0^1 
        t\cdot \left(
            \int_{\Sp^m \times A_t}
                x_k^2 
            \gamma_t(dx \times dy) 
        \right)^{1/2} \hspace{-1em}
    \nu(dt) \cdot
    \int_0^1 t'\cdot 
        \left(
            \int_{\Sp^m \times A_{t'}}
                {x'_k}^2 
            \gamma_{t'}(dx' \times dy') 
        \right)^{1/2} \hspace{-1em}
    \nu(dt') 
\right] \\
&=
\frac{1}{m+1}
\int_0^1 \int_0^1 
    tt'\cdot 
    \sum_{k=1}^{m+1}
    \left(
        \int_{\Sp^m \times A_t}
            x_k^2 \ 
        \gamma_t(dx \times dy) 
    \right)^{1/2}
    \left(
        \int_{\Sp^m \times A_{t'}}
            {x'_k}^2 \ 
        \gamma_{t'}(dx' \times dy') 
    \right)^{1/2}
\nu(dt') \nu(dt) \\
&\le \frac{1}{m+1}
\int_0^1 \int_0^1 
    tt'\cdot 
    \left(
        \sum_{k=1}^{m+1}
        \int_{\Sp^m \times A_t}
            x_k^2 \ 
        \gamma_t(dx \times dy) 
    \right)^{1/2}
    \left(
        \sum_{k=1}^{m+1}
        \int_{\Sp^m \times A_{t'}}
            {x'_k}^2 \ 
        \gamma_{t'}(dx' \times dy') 
    \right)^{1/2} \hspace{-1em}
\nu(dt') \nu(dt) \tag{$\ast \ast$}
\\
&=\frac{1}{m+1}
\int_0^1 \int_0^1 tt'\cdot 
\left(
    \int_{\Sp^m \times A_t}
    1 \ \gamma_t(dx \times dy) 
\right)^{1/2}
\left(
\int_{\Sp^m \times A_{t'}}
    1 \ \gamma_t(dx \times dy) 
\right)^{1/2} \hspace{-1em}
\nu(dt') \nu(dt) \\
&= \frac{1}{m+1} 
\left( 
    \int_{\Sp^m \times \Sp^n} 
        \| y_A \| 
    \gamma (dx \times dy)
\right)^2
= \frac{1}{m+1}
\left(
    \int_{\Sp^n} 
        \| y_A \| 
    \mu_n (dy)
\right)^2.
\end{align*}

We appeal to a well known characterization of $\mu_n$ to compute this integral. More precisely, if $Z_1,\dots,Z_{n+1}$ are independent $N(0,1)$ distributed random variables, the law of  
$$
y=\dfrac{(Z_1,Z_2,\dots,Z_{n+1})}{\,\,(Z_1^2+\dots+Z_{n+1}^2)^{1/2}}
$$
is given by $\mu_n$, which follows by the spherical symmetry of the standard multivariate Gaussian distribution. Then by definition of $y_A$, we have
\[
\|y_A\|^2\stackrel{d}{=}
\dfrac{Z_1^2+Z_2^2+\dots+Z_{m+1}^2}{Z_1^2+Z_2^2+\dots+Z_{n+1}^2}
\sim {\rm Beta}\left(
\dfrac{m+1}{2},\dfrac{n-m}{2}
\right),
\]
i.e., the Beta distribution with parameters $\frac{m+1}{2}$ and  $\frac{n-m}{2}$. See, e.g., Theorem 5.8.4 and Section 8.2 of \cite{degroot2012probability}. The ``$\stackrel{d}{=}$" symbol denotes an equality in distribution. Note that if $X\sim {\rm Beta}(a,b)$, then 
\begin{align*}
  \EE(\sqrt{X})
  =&~
  \dfrac{1}{\beta(a,b)}
  \int_0^1
  \sqrt{x}x^{a-1}
  (1-x)^{b-1}dx
  =
  \dfrac{1}{\beta(a,b)}
  \int_0^1
  x^{a-1/2}
  (1-x)^{b-1}dx
  =
  \dfrac{\beta(a+1/2,b)}{\beta(a,b)}
\end{align*}  
where $\beta(a,b):=\int_0^1 x^{a-1}(1-x)^{b-1}dx$. Thus,
\[
\int_{\Sp^n}\|y_A\| \ \mu_n(dx)
=\dfrac{\beta((m+2)/2,(n-m)/2)}{\beta((m+1)/2,(n-m)/2)}
=\dfrac{
\Gamma\left(\tfrac{m+2}{2}\right)
\Gamma\left(\tfrac{n+1}{2}\right)}{\Gamma\left(\tfrac{m+1}{2}\right)
\Gamma\left(\tfrac{n+2}{2}\right)
}.
\]
We now return to compute the three terms on the right hand side of \eqref{eq:eu-dis} one by one. First, by the spherical symmetry and the fact that $\mu_m$ is the uniform measure on $\Sp^m$, we observe that $x'\mapsto\int_{\Sp^m} \langle x, x'\rangle^2 \ \mu_m(dx)$ is constant for all $x'\in\Sp^m$. In particular,
$$
\iint_{\Sp^m\times\Sp^m}\langle x, x'\rangle^2 \ \mu_m(dx) \ \mu_m(dx')
=\int_{\Sp^m}\langle x, e_1\rangle^2 \ \mu_m(dx)
=\cdots=
\int_{\Sp^m}
\langle x, e_{m+1}\rangle^2 
\ \mu_m(dx).$$
Hence
$$
(m+1)\iint_{\Sp^m\times\Sp^m}\langle x, x'\rangle^2 \ \mu_m(dx) \ \mu_m(dx')
=\sum_{i=1}^{m+1}
\int_{\Sp^m}\langle x, e_i\rangle^2 \ \mu_m(dx)
=\int_{\Sp^m}\sum_{i=1}^{m+1}
\langle x, e_i\rangle^2 \ \mu_m(dx)=1$$
Therefore, 
$$
\iint_{\Sp^m\times\Sp^m}
\langle x, x'\rangle^2 \ \mu_m(dx) \ \mu_m(dx')
=\frac{1}{m+1}
\text{ and }
\iint_{\Sp^n\times\Sp^n}\langle y, y'\rangle^2 \ \mu_n(dy) \ \mu_n(dy')=\frac{1}{n+1}.
$$

It follows that any coupling $\gamma\in\mathcal{M}(\mu_m,\mu_n)$ satisfies 
\begin{align*}
(\disft(\gamma))^4
=&~4\left(
    \dfrac{1}{m+1}+\dfrac{1}{n+1}
\right)
-8J(\gamma)\\
\le&~
4\left(
\dfrac{1}{m+1}+\dfrac{1}{n+1}
\right)
-
\dfrac{8}{m+1}
\left(
    \int_{\Sp^n} \|y_A\| \ \mu_n (dx)
\right)^2\\
=&~
4\left(
\dfrac{1}{m+1}+\dfrac{1}{n+1}
\right)
-\dfrac{8}{m+1}
\left(
\dfrac{
\Gamma\left(\tfrac{m+2}{2}\right)
\Gamma\left(\tfrac{n+1}{2}\right)}{\Gamma\left(\tfrac{m+1}{2}\right)
\Gamma\left(\tfrac{n+2}{2}\right)
}
\right)^2\\
=&~
\big(\disft(\gamma_{m,n},\Sp^m_E,\Sp^n_E)\big)^4,
\end{align*}
via Lemma~\ref{lem:GW-eu-eq}, thus showing that the equatorial map is optimal. \qed

\begin{rmk}
By analyzing the equality conditions for the Cauchy-Schwarz inequality in the above proof, one obtains a proof of Theorem \ref{thm:equ-opti} 
without relying on the explicit computation of Lemma \ref{lem:GW-eu-eq}.
The equatorial coupling achieves equality in each inequality, so the computation of the bound of the distortion of $\gamma$ is also a computation of the distortion of the equatorial coupling and a proof that it is optimal.
The two inequalities occurring in the proof are:
\begin{enumerate}
    \item (Cauchy-Schwarz in Equation~\eqref{eq:Dmu-upbd}): holds with equality if, conditional on $\|y_A\| = t$, 
    $$
    x_k=C_1\dfrac{y_k}{\|y_A\|}
    $$
    for a $C_1$ constant possibly dependent on $\|y_A\|$ but not on  $k$.
    \item (Cauchy-Schwarz in Equation~\eqref{eq:Dmu2}): holds with equality if for almost every $t, t'$ there exists a constant $C_2$ such that for all $k \le n + 1$:
    \[
    \left(
        \int_{\Sp^m\times A_t}
            x_k^2 \ 
        \gamma_t(dx \times dy) 
    \right)^{1/2}=C_2\left(
        \int_{\Sp^m \times A_{t'}}
            x_k^2 \ 
        \gamma_{t'}(dx \times dy) 
    \right)^{1/2}.
    \]
\end{enumerate} 
The equatorial map satisfies both these conditions, so it is necessarily optimal.
\end{rmk}

\section{General Lower Bounds}\label{sec:gen-lb} 

In this section we will describe a number of different functions $\lbpq:\Gcalw \times \Gcalw \rightarrow \bbR_+$ which will become lower bounds for the $(p,q)$-Gromov-Wasserstein distance. Lower bounds for the $p$-Gromov-Wasserstein distance have been previously discussed \cite{memoli2011gromov,chowdhury2019gromov,memoli2022distance}. In \cite{memoli2011gromov} three lower bounds for the $p$-Gromov-Wasserstein distance called the First, Second, and Third Lower Bounds (denoted $\mathrm{FLB}_p, \mathrm{SLB}_p$ and $\mathrm{TLB}_p$ respectively) were constructed from certain invariants of metric measure spaces. Two of these lower bounds were based on the global and local distributions of distances. $\mathrm{SLB}_p$ was constructed using the Wasserstein distance on the real line between the global distributions of distances, and $\mathrm{TLB}_p$ was constructed using the local distribution of distances. 

In Section~\ref{sec:diam-lb} we consider a generalization of the lower bound based on the $p$-diameter of a metric measure space introduced in \cite{memoli2011gromov}. Note that we do not consider a generalization of $\mathrm{FLB}_p$ introduced in \cite{memoli2011gromov}, which is based on the $p$-eccentricity function associated to a metric measure space $X \in \mathcal{G}_w$ that assigns to each point in $X$ a value reflecting a notion of average distance to all other points in the space. In Sections~\ref{sec:second-lb} and \ref{sec:third-lb}, we construct $\tlbpq$ and $\slbpq$ using the local distributions of distances and global distributions of distances respectively that depend on the parameter $q$. For the choice $q=1$, our bounds $\tlbpq$ and $\slbpq$ agree with $\mathrm{TLB}_p$ and $\mathrm{SLB}_p$. Finally, in Proposition~\ref{thm:hierarchy} we give a hierarchy of our lower bounds for the setting of the $(p,q)$-Gromov-Wasserstein distance.

\subsection{Invariants.}\label{sec:invars}
We first recall some invariants of metric measure spaces which we will utilize in our construction of lower bounds for the $(p,q)$-Gromov-Wasserstein distance.

\begin{defn}[Global distribution of distances of a metric measure space] Let $(X,d_X,\mu_X) \in \Gcalw$. The \emph{global distribution of distances} associated to $X$ is the function,
\[H_X:[0,\diam(X)] \to [0,1] 
\,\,\text{given by}\,\,
t \mapsto \mu_X \otimes \mu_X(\{(x,x')\in X\times X| d_X(x,x') \leq t\}).
\]

\end{defn}

\begin{defn}[Local distribution of distances of a metric measure space] Let $(X,d_X,\mu_X) \in \Gcalw$. The \emph{local distribution of distances} associated to $X$ is the function,
\[h_X: X\times [0,\diam(X)] \to [0,1] 
\,\,\text{given by}\,\,
(x,t) \mapsto \mu_X(\{x'\in X| d_X(x,x') \leq t\}).
\]
\end{defn}

\begin{rmk}\label{rem:p-diameter and moment} It is described in \cite[Remark 5.4]{memoli2011gromov} that all $p$-diameters of $(X,d_X,\mu_X) \in \Gcalw$ can be recovered from its global distribution of distances as follows:
\[
\diamp(X) = m_p(dH_X)=\bigg(\int_0^\infty t^p dH_X(dt) \bigg)^{1/p}.
\]
\end{rmk}

The local distribution of distances generalizes the global one and we can relate the global and local distributions of distances by noting that 
\[h_X(x,t) = 
\mu_X\big(\overline{B_X(x,t)}\big)
\] 
where $\overline{B_X(x,t)}$ is the closed ball centered at $x$ with radius $t$. Then we have from \cite[Remark 5.8]{memoli2011gromov}, that
\begin{equation}\label{eq:rel-gdd-1}
 H_X(t) = \int_X \int_{\overline{{B_X(x,t)}}} \ \mu_X(dx') \ \mu_X(dx) = \int_X h_X(x,t) \ \mu_X(dx)
\quad
\text{for }
t\in [0,\diam(X)].   
\end{equation}

\begin{ex}\label{ex:gdd-geo}
    The global distance distribution function for $\Sp^n_G$ (for $n\ge 1$) is:
\[
    H_{\Sp^n_G}(t) = h_{\Sp^n_G}(t) = \frac{1}{\sqrt{\pi}} \frac{\Gamma(\frac{n+1}{2})}{\Gamma(\frac{n}{2})} \int_0^t \sin^{n-1}(s) \ ds 
\]

\noindent where $t \in [0,\pi]$. This follows by the fact that $\mu_n$ is the uniform measure on $\Sp^n$ and basic spherical geometry; see e.g. \cite[Chapter 1]{atkinson2012spherical}.  
\end{ex}

\begin{ex}\label{ex:gdd-eu}
From the previous example, and the fact that $\Vert x-x'\Vert=2\sin\left(\frac{d_n(x,x')}{2}\right)$ for all $x,x'\in\Sp^n\subset \mathbb{R}^{n+1}$, we obtain that the global distance distribution function for $\Sp^n_E$  is:
    \[ H_{\Sp_{E}^n}(t) = \frac{1}{\sqrt{\pi}} \frac{\Gamma(\frac{n+1}{2})}{\Gamma(\frac{n}{2})}\int_0^{2\arcsin\left(t/2\right)}
        \sin^{n-1}(s) \ ds = \frac{2^n}{\sqrt{\pi}} \frac{\Gamma(\frac{n+1}{2})}{\Gamma(\frac{n}{2})}\int_0^{t/2} s^{n-1}\left(\sqrt{1-s^2}\right)^{n-2} \ ds
    \]   

    \noindent where $t \in [0,2]$.
\end{ex}

\subsection{Diameter Lower Bound}\label{sec:diam-lb}

\begin{defn}[$(p,q)$-Diameter Lower Bound]\label{def:dlbpq}  The $(p,q)$-Diameter Lower Bound for $X,Y \in \Gcalw$, denoted $\dlbpq$, for $p,q\in [1,\infty]$ is:
\[
\dlbpq(X,Y):=\Lambda_q(\diamp(X),\diamp(Y))\stackrel{(*)}{=}\bigg|\left(\diamp(X)\right)^q-\left(\diamp(Y)\right)^q\bigg|^{1/q}
\]
where $(*)$ holds when $q\in[1,\infty)$.
\end{defn}

\begin{rmk} In general, by the triangle inequality for $\dgwpq$ (cf. Theorem \ref{thm:props}) and by Example \ref{ex:dgwpq-onepoint}, we always have $\dgwpq(X,Y) \geq \frac{1}{2}|\diamp(X)-\diamp(Y)|$ for all $p,q\in [1,\infty]$. The lower bound $\dlbpq$ depends on both $p$ and $q$, and provides a better lower bound for $\dgwpq$ since for all $q\geq 1$, it can be shown that $\dlbpq(X,Y) \geq |\diamp(X)-\diamp(Y)|.$
\end{rmk}

\subsection{Second Lower Bound}\label{sec:second-lb}

Here we consider a general lower bound for the $(p,q)$-Gromov-Wasserstein distance between two metric measure spaces $X$ and $Y$ based on the distribution of distances. For the case $q=1$, it is known (see \cite[Proposition 6.2]{memoli2011gromov}) that the $p$-Gromov-Wasserstein distance between $X$ and $Y$ is bounded below by the Wasserstein distance between the global distribution of distances of $X$ and $Y$ on the real line. We describe a function which we call $(p,q)$-Second Lower Bound, denoted $\mathrm{SLB}_{p,q}$, which is an analogue of this result for $q\geq 1$. We define $\mathrm{SLB}_{p,q}(X,Y)$ for $X, Y \in \mathcal{G}_w$ as follows:

\begin{defn}[$(p,q)$-Second Lower Bound] \label{def:slbpq} 
The $(p,q)$-Second Lower Bound for $X,Y \in \mathcal{G}_w$, denoted $\text{SLB}_{p,q}(X,Y)$, for $p,q\in [1,\infty]$, is:
\[\mathrm{SLB}_{p,q}(X,Y):=
 d^{(\bbR_+,\Lambda_q)}_{\operatorname{W}p}(dH_X,dH_Y).
\]
\end{defn}
For $X \in \mathcal{G}_w$, $dH_X$ is the unique measure on $\bbR_+$ defined by $dH_X([a,b]) := H_X(b)-H_X(a)$ for all $a \leq b$. It can be checked that $dH_X= (d_X)_\#\mu_X\otimes\mu_X$.

Note that Remark~\ref{rmk:Wpq-closed-form} relates the $d^{(\bbR_+,\Lambda_q)}_{\operatorname{W}p}$ distance to the usual Wasserstein distance between suitably transformed measures. The closed form solution of $d^{(\bbR_+,\Lambda_q)}_{\operatorname{W}p}$ ensures that the SLB can be computed very efficiently.

\subsection{Third Lower Bound}\label{sec:third-lb}

In analogy with the third lower bound from \cite{memoli2011gromov}, we consider the local distribution of distances and construct what we call the $(p,q)$-Third Lower Bound, denoted $\tlbpq$. For $X \in \Gcalw$, recall that to the local distribution of distances of $X$, $h_X(x,\cdot)$, we associate the unique measure on $\bbR_+$, $dh_X(x)$, where $dh_X(x)=(d_X(x,\cdot))_{\#}\mu_X$. 

\begin{defn}[$(p,q)$-Third Lower Bound] \label{def:tlbpq} The $(p,q)$-Third Lower Bound, denoted $\tlbpq$, for $p,q\in [1,\infty]$ and $X,Y \in \Gcalw$ is:
\[
\tlbpq(X,Y):=\inf_{\gamma \in \Mcal(\mu_X,\mu_Y)} \bigg(\int_{X \times Y} \big(d^{(\bbR_+,\Lambda_q)}_{\operatorname{W}p}(dh_X(x),dh_Y(y))\big)^p \gamma(dx \times dy)\bigg)^{1/p}
\]
for $1\le p <\infty$, and
\[
\mathrm{TLB}_{\infty,q}(X,Y):=\inf_{\gamma \in \Mcal(\mu_X,\mu_Y)}\sup_{(x,y)\in\supp[\gamma]} d^{(\bbR_+,\Lambda_q)}_{\operatorname{W}\infty}(dh_X(x),dh_Y(y)).
\]
\end{defn}

Note that the closed form solution of $d^{(\bbR_+,\Lambda_q)}_{\operatorname{W}p}$ from Remark~\ref{rmk:Wpq-closed-form} allows one to efficiently compute the TLB.

\subsection{The complete Hierarchy of Lower Bounds} 
Hierarchies of lower bounds have been considered in \cite{memoli2007use, chowdhury2019gromov, memoli2022distance}. A key aspect of \cite{memoli2011gromov} was providing a hierarchy between the aforementioned lower bounds, $\mathrm{FLB}_p, \mathrm{SLB}_p$ and $\mathrm{TLB}_p$, that showed $\dgwp \geq \mathrm{TLB}_p \geq \mathrm{FLB}_p$ and $\dgwp \geq \mathrm{SLB}_p$. 
In \cite{chowdhury2019gromov} they considered lower bounds in the setting of Gromov-Wasserstein between networks. In particular, they considered the associated push forwards of the First, Second, and Third Lower Bounds from \cite{memoli2011gromov} into the real line denoted, $\bbR$-$\mathrm{FLB}_p$, $\bbR$-$\mathrm{SLB}_p$ and $\bbR$-$\mathrm{TLB}_p$ and showed that $\mathrm{FLB}_p \geq \bbR$-$\mathrm{FLB}_p$, $\mathrm{SLB}_p \geq \bbR$-$\mathrm{SLB}_p$, and $\mathrm{TLB}_p \geq \bbR$-$\mathrm{TLB}_p$.

We note here that \cite{chowdhury2019gromov} and \cite{memoli2011gromov} did not provide a complete hierarchy between their lower bounds, where by incomplete we mean only partial relationships between some of the bounds were given. Proposition 2.8 of \cite{memoli2022distance} bridged this gap by giving a hierarchy of lower bounds that related the Third and Second Lower Bounds of \cite{memoli2011gromov} to one another and thus strengthened the original hierarchy results from \cite{memoli2011gromov} by showing $\dgwp \geq \mathrm{TLB}_p \geq \mathrm{SLB}_p \geq \mathrm{FLB}_p.$ Proposition \ref{thm:hierarchy} below generalizes \cite[Proposition 2.8]{memoli2022distance} to the setting of the $(p,q)$-Gromov-Wasserstein distance.

\begin{prop}
    \label{thm:hierarchy}
For all $X,Y\in\mathcal{G}_w$ and all $p,q\in [1,\infty]$ we have
$$
2\,\dgwpq(X,Y)\geq \tlbpq(X,Y)\geq \slbpq(X,Y)
\geq \dlbpqmin(X,Y)
$$
where $p\wedge q$ denotes $\min\{p,q\}$.
\end{prop}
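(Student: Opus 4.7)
The plan is to establish the three inequalities $2\dgwpq\geq\tlbpq\geq\slbpq\geq\dlbpqmin$ in order, following the template of Proposition~2.8 of \cite{memoli2022distance} but accommodating the metric $\Lambda_q$ on $\bbR_+$ in place of the Euclidean metric. For the first inequality, I would fix any $\gamma\in\mathcal{M}(\mu_X,\mu_Y)$ and, for each $(x,y)\in X\times Y$, observe that the pushforward of $\gamma$ under the measurable map $(x',y')\mapsto (d_X(x,x'),d_Y(y,y'))$ is a coupling of $dh_X(x)$ and $dh_Y(y)$. The inner integral $\int\Lambda_q(d_X(x,x'),d_Y(y,y'))^{p}\,\gamma(dx'\times dy')$ is therefore bounded below by $\bigl(d^{(\bbR_+,\Lambda_q)}_{\operatorname{W}p}(dh_X(x),dh_Y(y))\bigr)^{p}$; integrating against $\gamma$ and infimizing over $\gamma$ yields $(2\dgwpq(X,Y))^{p}\geq\tlbpq(X,Y)^{p}$.

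For the second inequality, I would use a measurable selection theorem (in the style of Kuratowski--Ryll-Nardzewski) to pick, for each $(x,y)$, an optimal coupling $\theta_{x,y}\in\mathcal{M}(dh_X(x),dh_Y(y))$ realizing $d^{(\bbR_+,\Lambda_q)}_{\operatorname{W}p}(dh_X(x),dh_Y(y))$. Given $\gamma\in\mathcal{M}(\mu_X,\mu_Y)$, I would then form $\eta:=\int\theta_{x,y}\,\gamma(dx\times dy)$; its marginals work out to $dH_X$ and $dH_Y$ by the identity $dH_X=\int dh_X(x)\,\mu_X(dx)$ that follows from Equation~\eqref{eq:rel-gdd-1} (and its analogue for $Y$), so $\eta\in\mathcal{M}(dH_X,dH_Y)$. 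A Fubini computation then gives $\int\Lambda_q(s,t)^{p}\,\eta(ds\times dt)=\int\bigl(d^{(\bbR_+,\Lambda_q)}_{\operatorname{W}p}(dh_X(x),dh_Y(y))\bigr)^{p}\,\gamma(dx\times dy)$; since $\eta$ is merely one valid coupling between $dH_X$ and $dH_Y$, $\slbpq(X,Y)^{p}$ is at most the right-hand side, and infimizing over $\gamma$ delivers $\tlbpq\geq\slbpq$.

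For the third inequality, the crux is to show that, for arbitrary probability measures $\alpha,\beta$ on $\bbR_+$,
$$d^{(\bbR_+,\Lambda_q)}_{\operatorname{W}p}(\alpha,\beta)\geq\Lambda_{p\wedge q}(m_p(\alpha),m_p(\beta)),$$
and then to specialize to $\alpha=dH_X$, $\beta=dH_Y$, recalling $m_p(dH_X)=\diam_p(X)$ from Remark~\ref{rem:p-diameter and moment}. When $q\leq p$, so $p/q\geq 1$, I would apply the reverse-triangle form of Minkowski's inequality on $L^{p/q}([0,1])$ to the non-negative functions $(F_\alpha^{-1})^{q}$ and $(F_\beta^{-1})^{q}$, using the closed form from Remark~\ref{rmk:clfrmdWR+Lbdaq}; taking the $q$-th root then yields $\Lambda_q=\Lambda_{p\wedge q}$ on the right. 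When $p<q$, so $p\wedge q=p$, I would invoke the monotonicity $\Lambda_p\leq\Lambda_q$ from Proposition~\ref{prop:lmbdaq} to reduce immediately to the previous case with $q$ replaced by $p$.

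The main obstacle is the third step when $q\leq p$: the na\"ive approach of applying the triangle inequality for $d^{(\bbR_+,\Lambda_q)}_{\operatorname{W}p}$ to the triple $\alpha,\delta_0,\beta$ (together with the computation $d^{(\bbR_+,\Lambda_q)}_{\operatorname{W}p}(\alpha,\delta_0)=m_p(\alpha)$, which uses $\Lambda_q(t,0)=t$) only yields the weaker lower bound $|m_p(\alpha)-m_p(\beta)|=\Lambda_1(m_p(\alpha),m_p(\beta))\leq\Lambda_{p\wedge q}(m_p(\alpha),m_p(\beta))$, so the Minkowski refinement applied to the closed-form quantile expression is essential for reaching the sharper target $\Lambda_{p\wedge q}$. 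The boundary cases $p=\infty$ or $q=\infty$ require only routine modifications (replacing integrals over $\gamma$ by suprema on $\supp[\gamma]$, and handling $\Lambda_\infty$ via its ultrametric structure from Proposition~\ref{prop:lmbdaq}), which slot into the same template.
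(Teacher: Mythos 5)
Your proposal follows the same three-step structure and the same key ideas as the paper's proof: the pushforward coupling argument for the first inequality, a measurable selection of optimal couplings $\theta_{x,y}$ (the paper's Lemma~\ref{lem:optcplmble}, proved via \cite[Corollary 5.22]{villani2009optimal}) plus a Fubini computation for the second, and for the third the crucial observation that one must not apply the triangle inequality for $d^{(\bbR_+,\Lambda_q)}_{\operatorname{W}p}$ directly but rather pass to the closed-form quantile representation and use the (reverse) triangle inequality at the level of the $L^{p/q}$ norm — which is exactly what the paper does when it rewrites $(\slbpq)^q$ as $d^{\bbR}_{\operatorname{W}p/q}\big((S_q)_\#dH_X,(S_q)_\#dH_Y\big)$ and invokes the $p/q$-Wasserstein triangle inequality — together with the monotonicity $\Lambda_p\le\Lambda_q$ to reduce the $p<q$ case. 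The only difference is cosmetic: the paper is somewhat more careful about the $p=\infty$ and $q=\infty$ cases, handling them by a limiting argument based on uniform convergence $\Lambda_q\to\Lambda_\infty$ on the compact set of realized distances, whereas you defer these to a claim of "routine modifications."
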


We defer the proof of this proposition to  Section~\ref{subsec:thmhierarchy}. See also Example \ref{ex:dlb-ctrex}.

\subsection{Lower Bounds for Spheres.}\label{sec:lbs-spheres}
In this section we consider the hierarchy of lower bounds for the Gromov-Wasserstein distance between spheres equipped with the geodesic distance and Euclidean distance. Let $\Sp^m_\bullet$ represent the $m$-sphere equipped with the geodesic or Euclidean metric (See Example~\ref{ex:spheres-def}).  

\subsubsection{Diameter Lower Bound for Spheres.} \label{subsec:dlbsp}
Recall from Remark~\ref{rem:p-diameter and moment} that the $p$-diameter is related to the $p$-moment of the global distance distribution as follows:
\begin{align*}
    \diam_p(\Sp_\bullet^m) = m_{p}(dH_{\Sp_\bullet^m})
    = \bigg(\int_0^1(H^{-1}_{\Sp_\bullet^m}(u))^{p}du\bigg)^{1/p},
\end{align*}
where $H_{\Sp_\bullet^m}^{-1}$ is the generalized inverse of $H_{\Sp_\bullet^m}$ (see Example~\ref{ex:dWonR}).
The diameter lower bound is defined as \[\dlbpq(\Sp_\bullet^m,\Sp_\bullet^n) =\Lambda_q(\diam_p(\Sp_\bullet^m),\diam_p(\Sp_\bullet^n)).\]

\subsubsection{Second and Third Lower Bound for Spheres}\label{subsec:slbsp}
The local distribution of distances is equal to the  global distribution of distances in the case of spheres. This implies that the third lower bound is equal to the second lower bound, that is:
\begin{align*} \tlbpq(\Sp_\bullet^m,\Sp_\bullet^n) 
& = \inf_{\gamma \in \Mcal(\mu_{m},\mu_n)} \bigg(\int_{\Sp_\bullet^m \times \Sp_\bullet^n} \left(d^{(\bbR_+,\Lambda_q)}_{\operatorname{W}p}(dh_{\Sp_\bullet^m}(x),dh_{\Sp_\bullet^n}(y))\right)^{p} \gamma(dx \times dy)\bigg)^{1/p} \\
& = \bigg(d^{(\bbR_+,\Lambda_q)}_{\operatorname{W}p}(dH_{\Sp_\bullet^m},dH_{\Sp_\bullet^n})  \bigg)^{1/p}\\
& =\slbpq(\Sp_\bullet^m,\Sp_\bullet^n).
\end{align*}
It follows that we have the following hierarchy of lower bounds of $\dgwpq$ for spheres when $p\geq q$: 
\[2\dgwpq(\Sp_\bullet^m,\Sp_\bullet^n) \ge \tlbpq(\Sp_\bullet^m,\Sp_\bullet^n) = \slbpq(\Sp_\bullet^m,\Sp_\bullet^n) \ge \dlbpq(\Sp_\bullet^m,\Sp_\bullet^n). \]

We can compute the second lower bound (equivalently the third lower bound) between $\Sp_\bullet^m$ and $\Spn_\bullet$ as follows: 

\[  \mathrm{SLB}_{p,q}(\Sp_\bullet^m,\Sp_\bullet^n) = d^{(\bbR_+,\Lambda_q)}_{\operatorname{W}p}(dH_{\Sp_\bullet^m},dH_{\Sp_\bullet^n})   = \bigg(\int_0^1 \big|(H_{\Sp_\bullet^m}^{-1}(u))^q - (H_{\Sp_\bullet^n}^{-1}(u))^q\big|^{p/q} \ du \bigg)^{1/p}
\]
where the second equality holds by Remark \ref{rmk:clfrmdWR+Lbdaq}.

\subsubsection{Lower Bounds for $\dgwft$ between Spheres}\label{subsec:lbspex}

We now provide example computations for lower bounds of the $(4,2)$-Gromov-Wasserstein distance between spheres of dimension $0,1$ and $2$, when equipped with the geodesic distance. We make use of the formulas from Section~\ref{subsec:dlbsp} and Section~\ref{subsec:slbsp}.  To facilitate presentation, we defer the detailed calculations to Appendix~\ref{app:calc}  and only present the final values in Table~\ref{table:GW-geo} and Table~\ref{table:GW euc}.

\begin{rmk}(Hierarchy of lower bounds and $\dgwft$ for $\Spg^0,\Spg^1,\Spg^2$)
    It follows from Example~\ref{ex:dlb-geo} and Example~\ref{ex:slb-geo} that the hierarchy of lower bounds for the $(4,2)$-Gromov-Wasserstein distance are given in Table~\ref{table:GW-geo}. 
\begin{table}[ht]
\begin{tabular}{|cl|c|cl|c|}
\hline
\multicolumn{2}{|c|}{\multirow{2}{*}{Spheres}} & \multirow{2}{*}{$\frac{1}{2}\operatorname{DLB}_{4,2}$}   & \multicolumn{2}{c|}{\multirow{2}{*}{
$\frac{1}{2}\operatorname{SLB}_{4,2}
=\frac{1}{2}\operatorname{TLB}_{4,2}$
}}   & \multirow{2}{*}{$d_{\operatorname{GW}4,2}$}    \\
\multicolumn{2}{|c|}{}                         &                        & \multicolumn{2}{c|}{}                       &                        \\ \hline
\multicolumn{2}{|c|}{\multirow{2}{*}{$\Spg^0\,\mbox{vs.}\,\Spg^1$}}        & \multirow{2}{*}{0.801} & \multicolumn{2}{c|}{\multirow{2}{*}{0.918}} & \multirow{2}{*}{$\le 1.050$} \\
\multicolumn{2}{|c|}{}                         &                        & \multicolumn{2}{c|}{}                       &                        \\ \hline
\multicolumn{2}{|c|}{\multirow{2}{*}{$\Spg^1\,\mbox{vs.}\,\Spg^2$}}        & \multirow{2}{*}{0.431} & \multicolumn{2}{c|}{\multirow{2}{*}{0.461}} & \multirow{2}{*}{$\le 0.734$} \\
\multicolumn{2}{|c|}{}                         &                        & \multicolumn{2}{c|}{}                       &                        \\ \hline
\end{tabular}
\medskip
\caption{This table demonstrates the lower bound hierarchy from Proposition~\ref{thm:hierarchy} in the case of the (4,2)-Gromov-Wasserstein distance between spheres equipped with the geodesic distance. The values of the lower bounds are computed in Section~\ref{sec:sphere-lb-geo}, while the upper bounds of the (4,2)-Gromov-Wasserstein distances are  computed using the equatorial coupling (see Claim~\ref{claim:eq-coupling} and Examples~\ref{ex:dis42g01} and \ref{ex:dis42g12}).} 
\label{table:GW-geo}
\end{table}
\end{rmk}

\begin{rmk}(Hierarchy of lower bounds and $\dgwft$ for $\Spe^0,\Spe^1,\Spe^2$)
Similarly to the case of $\Spg$, calculations for bounding $\dgwft$ can be done for spheres equipped with the Euclidean distance using the global distribution of distances for $\Spe^m$ given in Example~\ref{ex:gdd-eu}.

     In the next section, we give an exact determination of the $(4,2)$-Gromov-Wasserstein distance between spheres equipped with the Euclidean distance. So, the values of the  lower bounds can be compared against the exact value of $(4,2)$-Gromov-Wasserstein distance between Euclidean spheres as shown in Table~\ref{table:GW euc}. 

     \smallskip
     In Table~\ref{table:GW euc} note how the exact values significantly exceed those provided by the lower bounds. 
\begin{table}[ht]
\begin{tabular}{|cl|c|cl|c|}
\hline
\multicolumn{2}{|c|}{\multirow{2}{*}{Spheres}} & \multirow{2}{*}{$\frac{1}{2}\operatorname{DLB}_{4,2}$}   & \multicolumn{2}{c|}{\multirow{2}{*}{
$\frac{1}{2}\operatorname{SLB}_{4,2}
=\frac{1}{2}\operatorname{TLB}_{4,2}$
}}   & \multirow{2}{*}{$d_{\operatorname{GW}4,2}$}    \\
\multicolumn{2}{|c|}{}                         &                        & \multicolumn{2}{c|}{}                       &                        \\ \hline 
\multicolumn{2}{|c|}{\multirow{2}{*}{$\Spe^0\,\mbox{vs.}\,\Spe^1$}}        & \multirow{2}{*}{0.308} & \multicolumn{2}{c|}{\multirow{2}{*}{0.488}} & \multirow{2}{*}{0.644} \\
\multicolumn{2}{|c|}{}                         &                        & \multicolumn{2}{c|}{}                       &                        \\ \hline
\multicolumn{2}{|c|}{\multirow{2}{*}{$\Spe^1\,\mbox{vs.}\,\Spe^2$}}        & \multirow{2}{*}{0.187} & \multicolumn{2}{c|}{\multirow{2}{*}{0.276}} & \multirow{2}{*}{0.482} \\
\multicolumn{2}{|c|}{}                         &                        & \multicolumn{2}{c|}{}                       &                        \\ \hline
\end{tabular}
\medskip
\caption{This table demonstrates the lower bound hierarchy from Proposition~\ref{thm:hierarchy} in the case of the (4,2)-Gromov-Wasserstein distance between spheres with the Euclidean metric. The values of the lower bounds are computed using the elements in Section~\ref{sec:sphere-lb-euc}, while the exact values of the (4,2)-Gromov-Wasserstein distances are  computed using Theorem~\ref{thm:equ-opti} (see Remark \ref{rmk:ud}).}
\label{table:GW euc}
\end{table}
\end{rmk}

\section{Experimental Illustration}\label{sec:exp}

The explicit computations of the $(4, 2)$-Gromov-Wasserstein distance between
Euclidean spheres provide a helpful tool for benchmarking common optimal
transport solvers and packages. The goal of this section is to benchmark various sampling methods and the
number of samples required to obtain accurate estimates of the
Gromov-Wasserstein distance while also ascertaining the accuracy of the
various solvers when taking into account Theorem \ref{thm:equ-opti}.

The authors are aware of two 
Python implementations of optimal transport GW solvers: Python Optimal Transport
(POT) \cite{flamary2021pot} and  Optimal Transport Tools (OTT)  \cite{cuturi2022optimal}. 
These packages implement two of the common methods for
computing the Gromov-Wasserstein distance:
Conditional Gradient Descent (implemented by POT)  \cite{vayer2018optimal} and Sinkhorn Projections with entropic regularization
(implemengted by both OTT and POT) \cite{peyre2016gromov,peyre2019computational}.

In our experiments we used the Conditional Gradient Descent (CGD) solver from POT and the Sinkhorn solver from OTT (with regularizarion parameter $0.01$).\footnote{The reason we used the Sinkhorn solver from OTT instead of the one from POT is that the former
    appears to be faster for ``smaller scale problems'' according to their
     documentation: 
    \url{https://ott-jax.readthedocs.io/en/latest/tutorials/OTT\_\%26\_POT.html}.}  All experiments and their results are
available in the Github repository \cite{dan-github}.

We run two sorts of experiments. First, we examine how the number of
samples relates to the choice of: the 
solver, the subsampling method, and the weights. Second, we fix the number of samples and vary the dimensionality of the
spheres. The former are described next whereas the latter results are presented in Appendix \ref{app:exps}.

\subsection*{Experiment with varying number of sample points}

In this experiment, we fix the dimensions of both spheres to the values $(m,n)=(1,2),(1,3)$ and $(2,3)$ and vary the number
of samples we draw from each one. For each number of 
sample points between $10$ and $200$ (in increments of $10$), we run $20$ trials
of each combination of sampling method, weight procedure, and GW solver (see below). The maximal size $200$ was chosen so as to keep a reasonable computational burden. 

See Figures~\ref{fig:varied-points-2-1},
\ref{fig:varied-points-3-1}, and \ref{fig:varied-points-3-2} where the label POT is used to indicate the CGD solver and OTT is used to indicate the Sinkhorn solver. The plotted
lines are the mean values estimated from the $20$ trials, while the shaded areas
represent the central $80\%$ of samples. Dotted lines correspond to the ``true" values established by Theorem \ref{thm:equ-opti}:
\begin{itemize}
\item[(1,2):] $\dgwft(\Sp^{1}_E,\Sp^2_E)
=\dfrac{1}{\sqrt{2}}
\left(
\dfrac{5}{6}
-\dfrac{\pi^2}{16}
\right)^{1/4}\approx \mathbf{0.482}$; see Remark \ref{rmk:ud}.
\item[(1,3):] $\dgwft(\Sp^{1}_E,\Sp^3_E)=\left(\dfrac{11}{144}\right)^{1/4}\approx \mathbf{0.526}$; see Remark \ref{rmk:neqmp2}.
\item[(2,3):] $\dgwft(\Sp^{2}_E,\Sp^3_E)
=
\dfrac{1}{\sqrt{2}}
\left(
\dfrac{7}{12}
-\dfrac{8}{27}
\left(\dfrac{16}{\pi^2}\right)
\right)^{1/4}
\approx
\mathbf{0.400}$; see Remark \ref{rmk:ud}.
\end{itemize}

We implement two sampling strategies:

\begin{enumerate}
\item \textbf{Random}: We draw the desired number of uniform samples via the well known method from \cite{muller1959note}, which consists of normalizing standard Gaussian samples.

\item \textbf{Farthest Point Sampling (FPS)}: We first sample $10^6$ points from the sphere uniformly at random via \cite{muller1959note} and we then select the desired number of subsamples via the FPS method \cite{gonzalez1985clustering,eldar1997farthest}.\footnote{In a nutshell,  given a finite metric space $(X,d_X)$ and a poitive integer $N\geq 2$, the FPS method  selects the first as a random point from $X$. The second point will be any point at maximal distance fromt the first selected point. The third point will be any point at maximal distance form the first two points and so on.} 
\end{enumerate}

We implement two different procedures for assigning weights to the samples. Given a finite sample  $P\subset \Sp^m$:

\begin{enumerate}
    \item \textbf{Voronoi}: This consists on assigning to each point $p\in P$  an estimate of the total mass of the Voronoi cell on the sphere corresponding to $p$. To estimate this, we construct a set $S$ consisting of $10^6$ uniformly points on the sphere and assign to $p$ the proportion of points from $S$ the that are closer to it than to any other point
        in $P$.
    
    \item \textbf{Uniform}: We simply give uniform weights $|P|^{-1}$ to all points $p\in P$. 
\end{enumerate}

\begin{figure}[ht]
    \centering
    \begin{tabular}{cc}
        \includegraphics[width=0.45\linewidth]{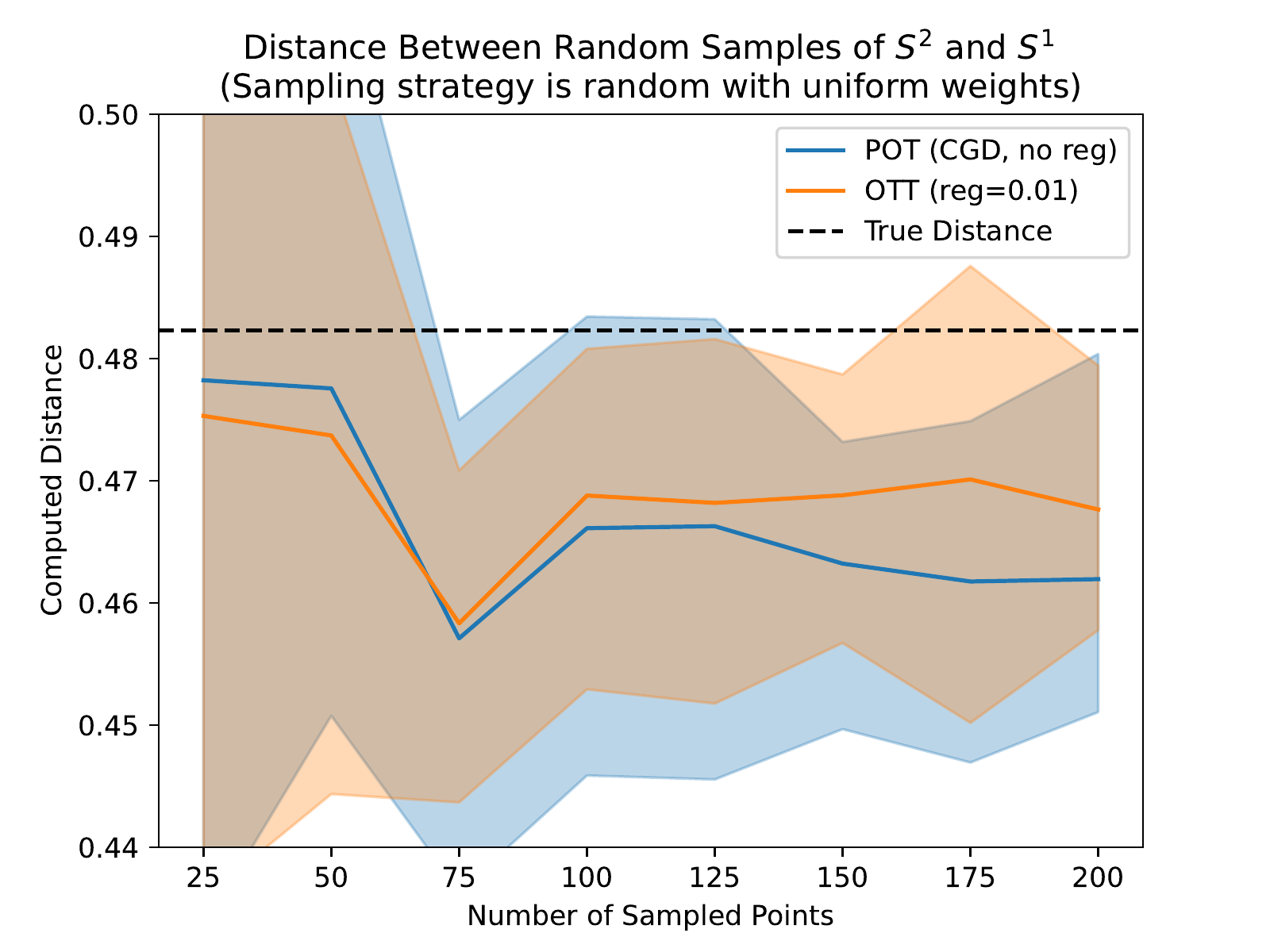} &
        \includegraphics[width=0.45\linewidth]{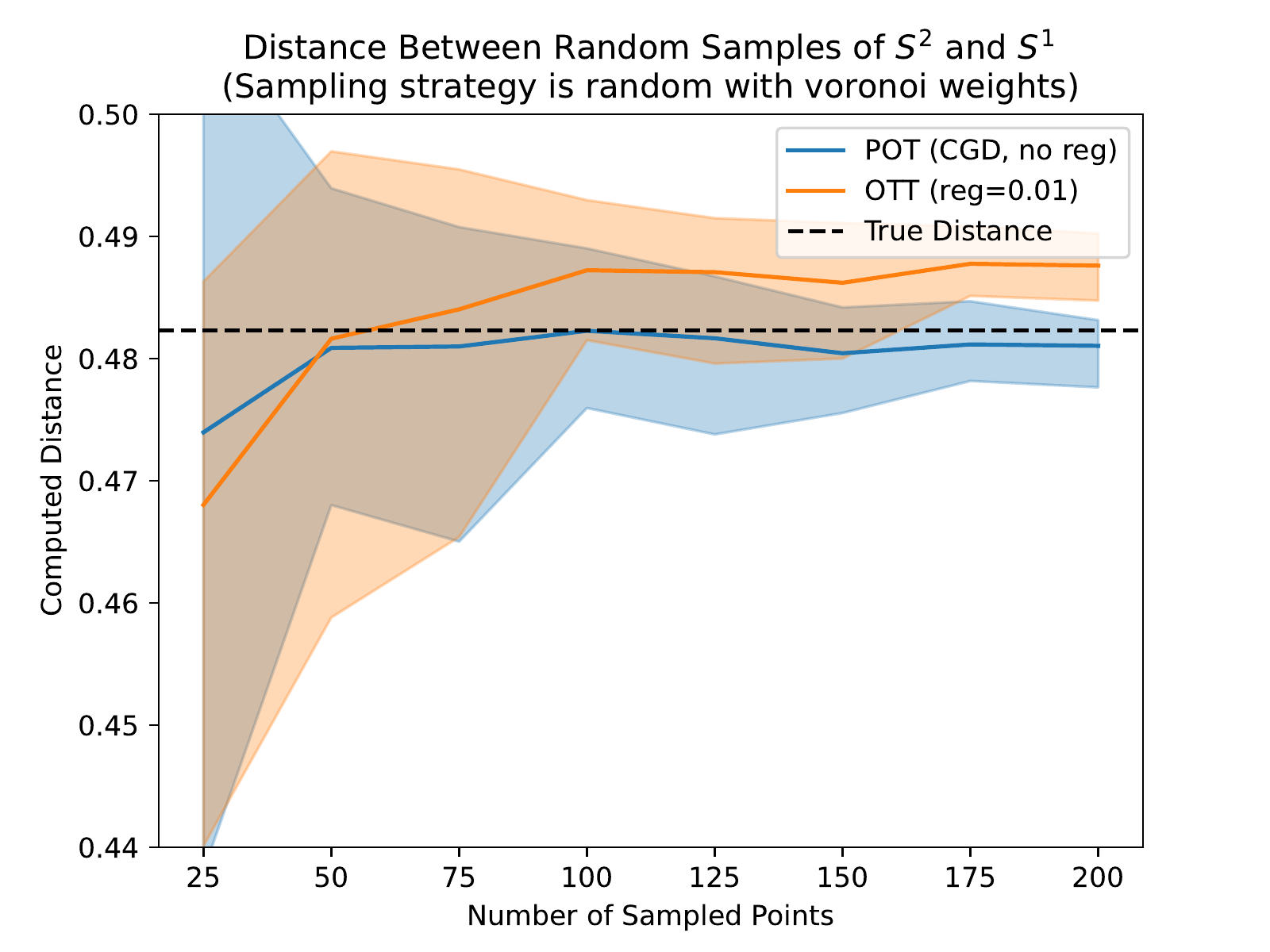} \\
        \includegraphics[width=0.45\linewidth]{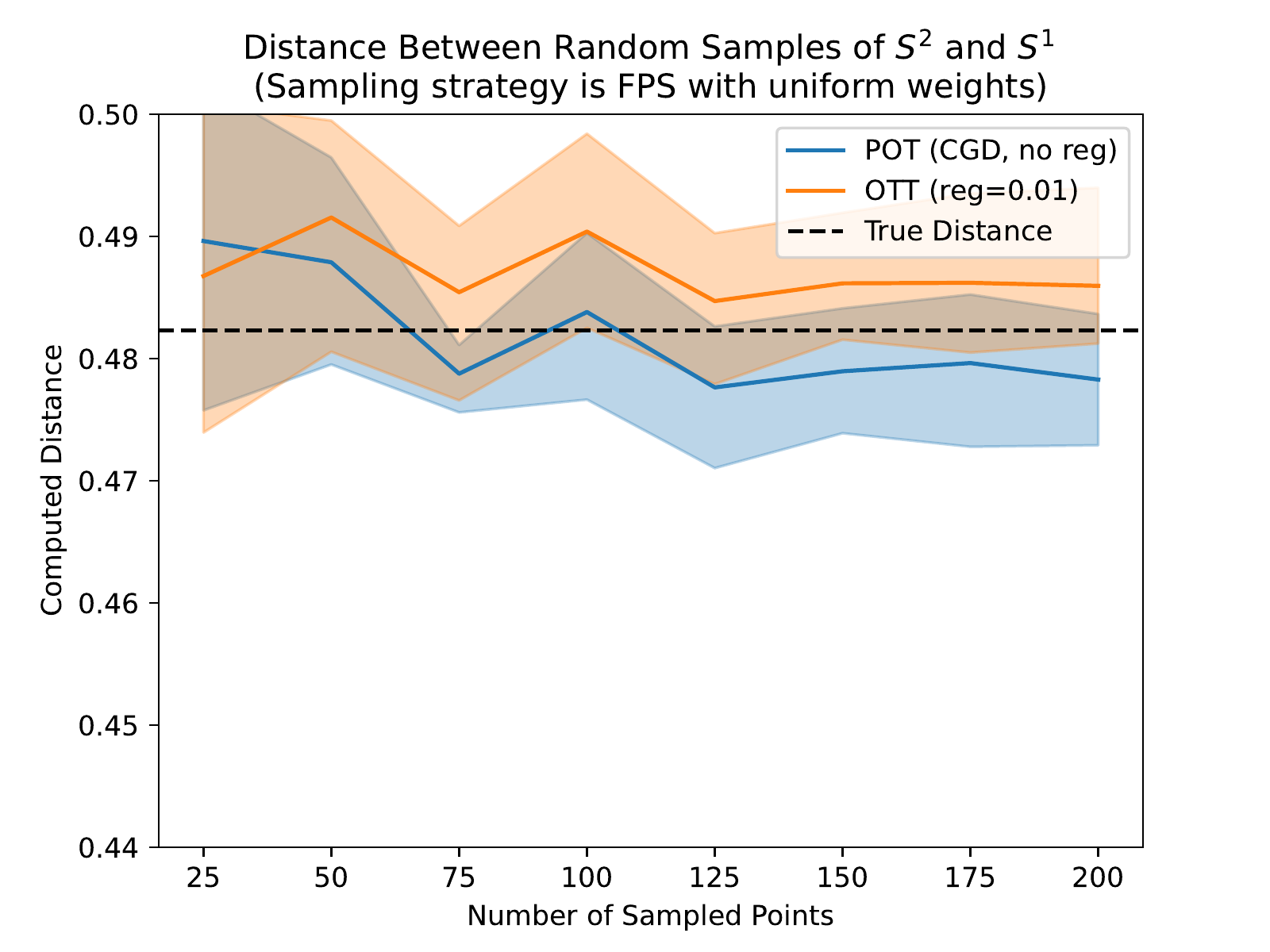} &
        \includegraphics[width=0.45\linewidth]{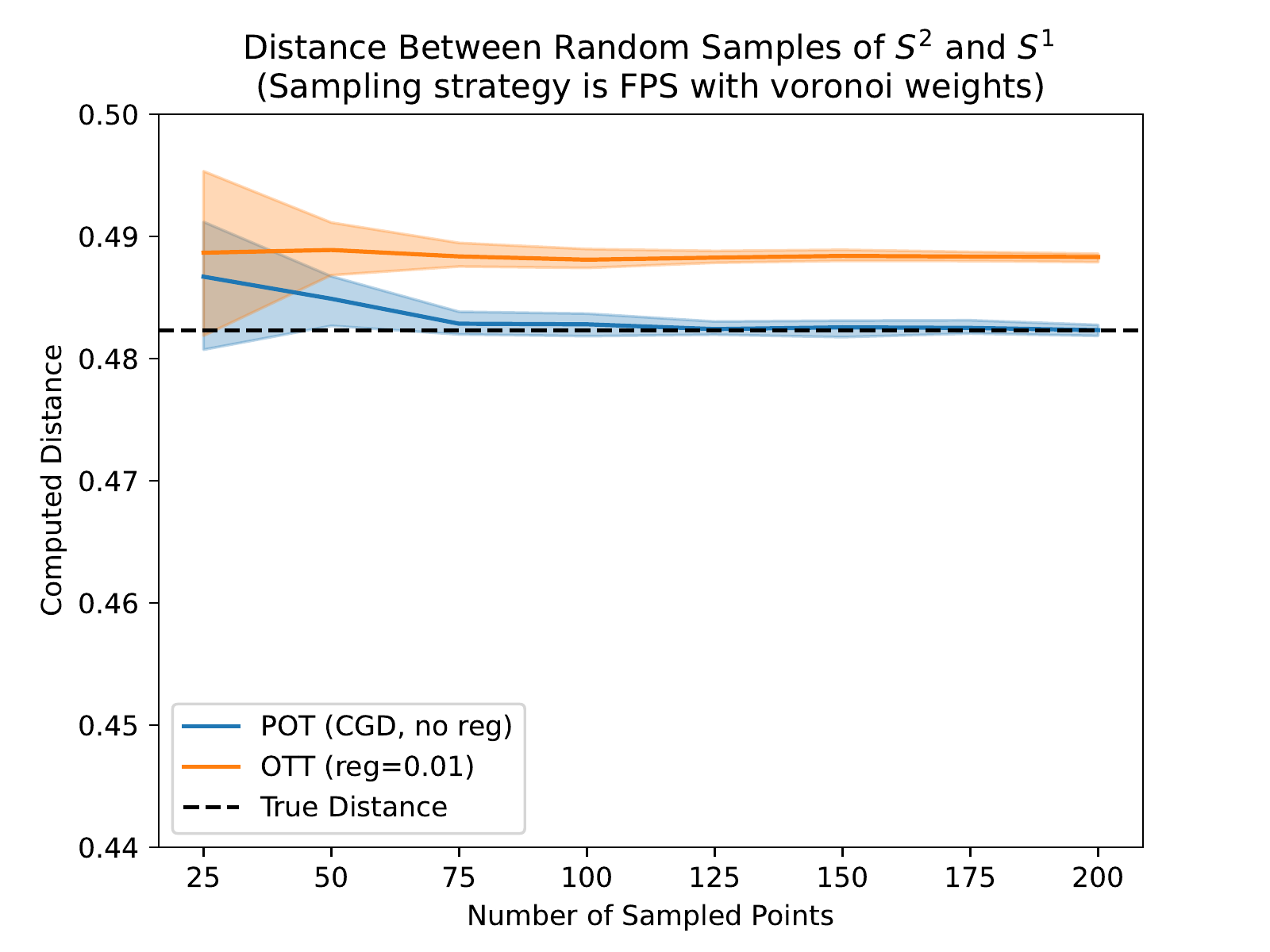} 
    \end{tabular}
   \caption{Estimating the Gromov-Wasserstein distance between $\mathbb{S}^2_E$ and $\mathbb{S}^1_E$. }\label{fig:varied-points-2-1}
\end{figure}

\begin{figure}[ht]
    \centering
    \begin{tabular}{cc}
        \includegraphics[width=0.45\linewidth]{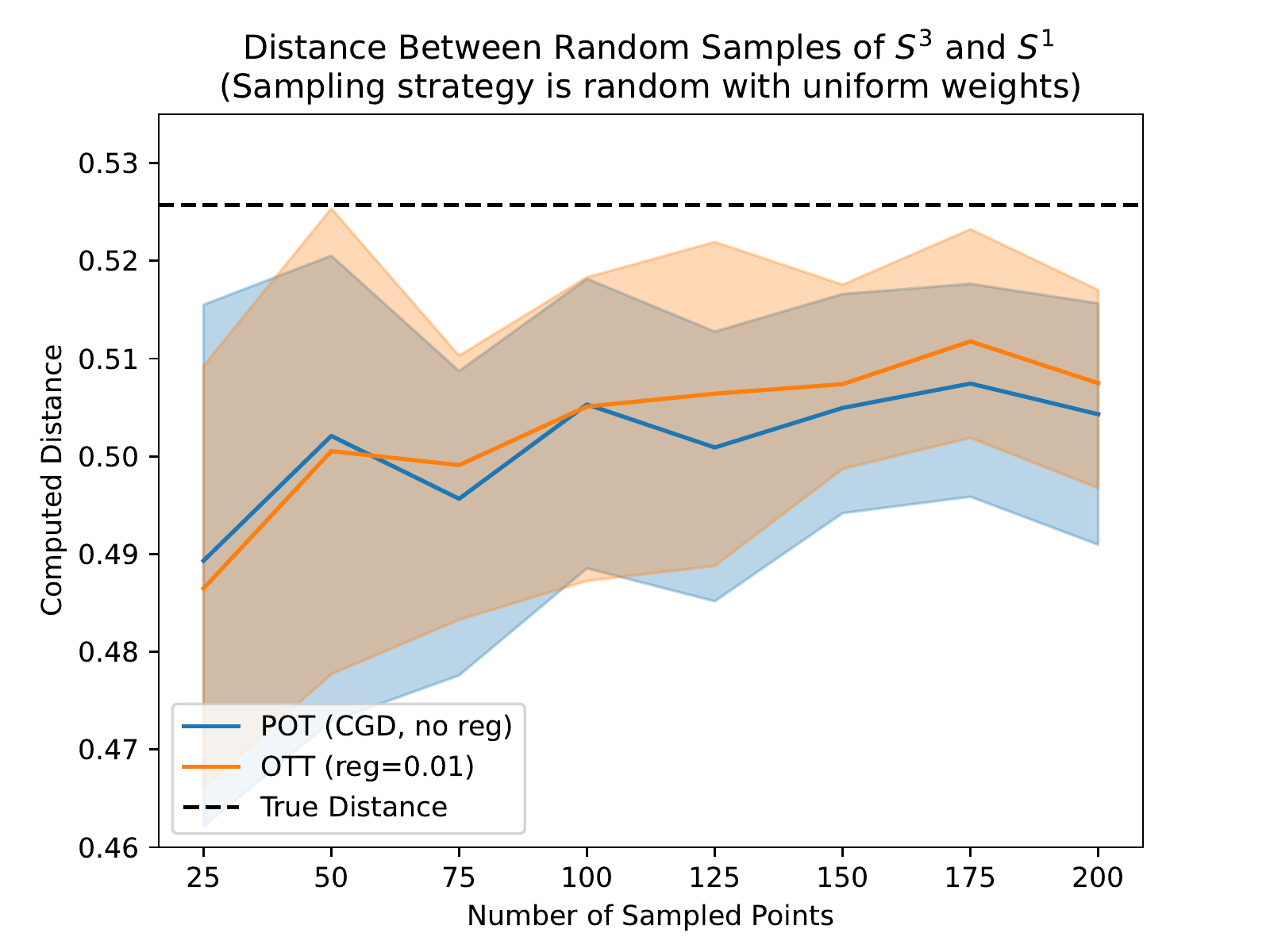} &
        \includegraphics[width=0.45\linewidth]{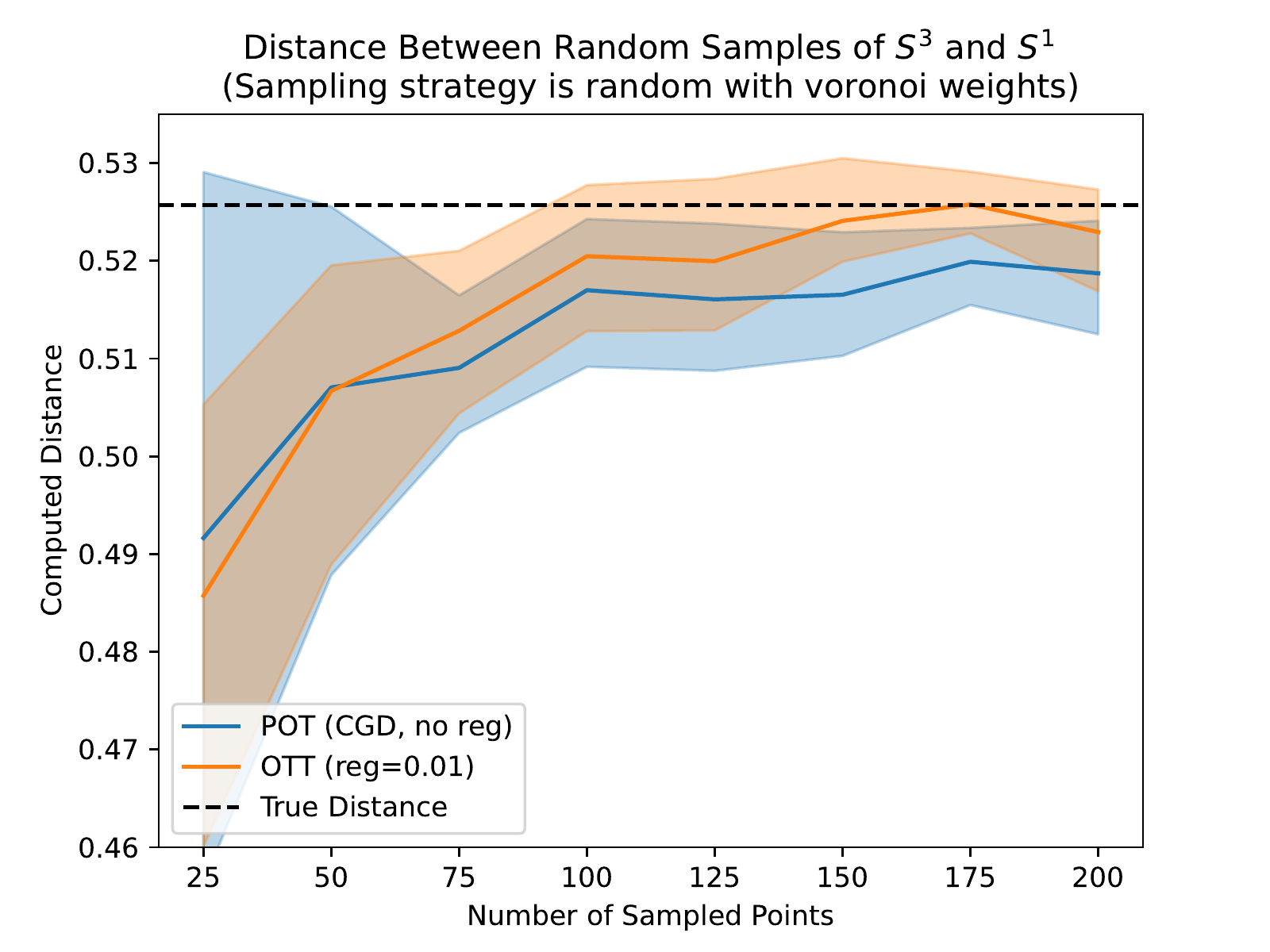} \\
        \includegraphics[width=0.45\linewidth]{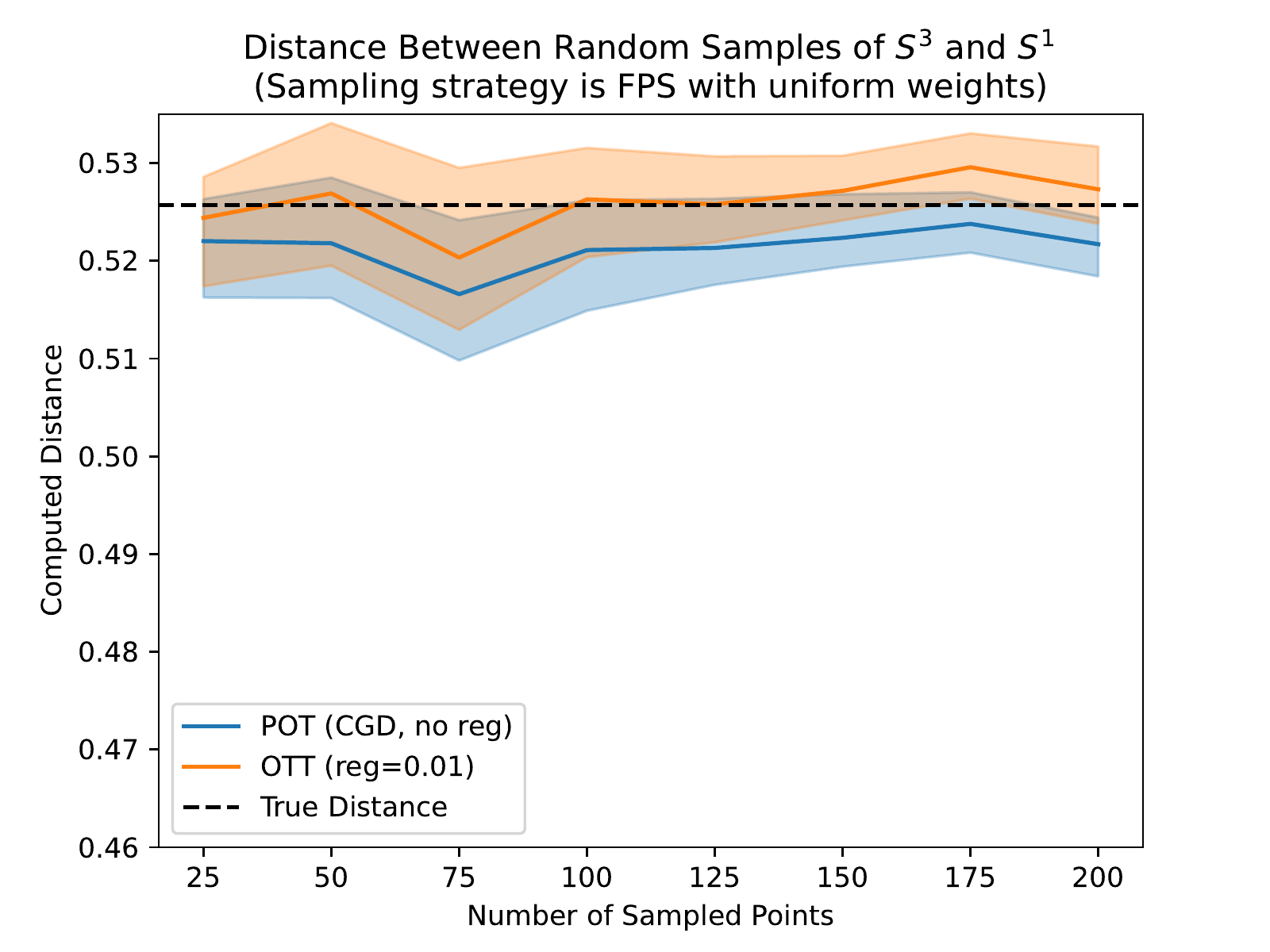} &
        \includegraphics[width=0.45\linewidth]{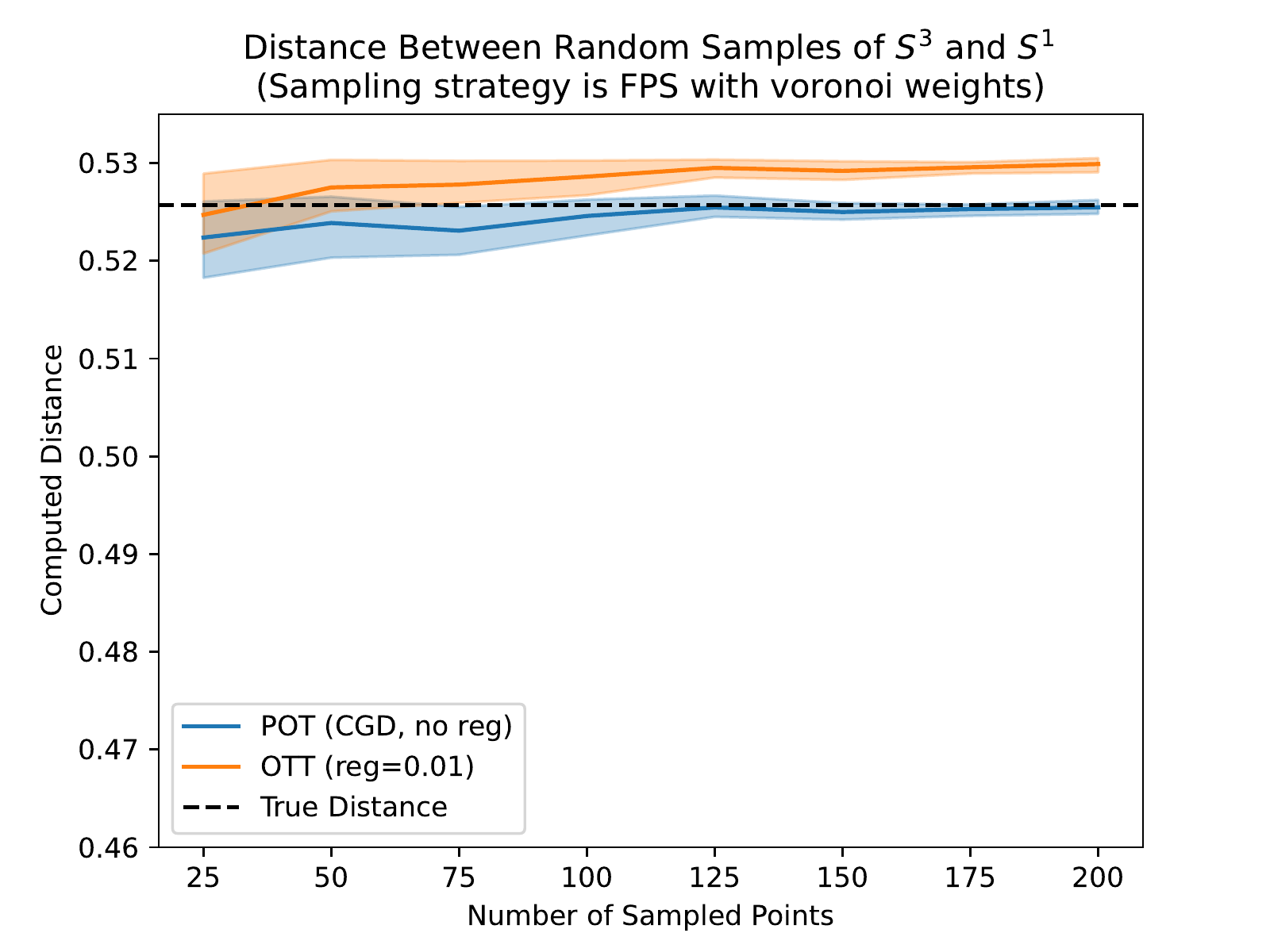} 
    \end{tabular}
    \caption{Estimating the Gromov-Wasserstein distance between $\mathbb{S}^3_E$ and $\mathbb{S}^1_E$. }\label{fig:varied-points-3-1}
\end{figure}

\begin{figure}[ht]
    \centering
    \begin{tabular}{cc}
        \includegraphics[width=0.45\linewidth]{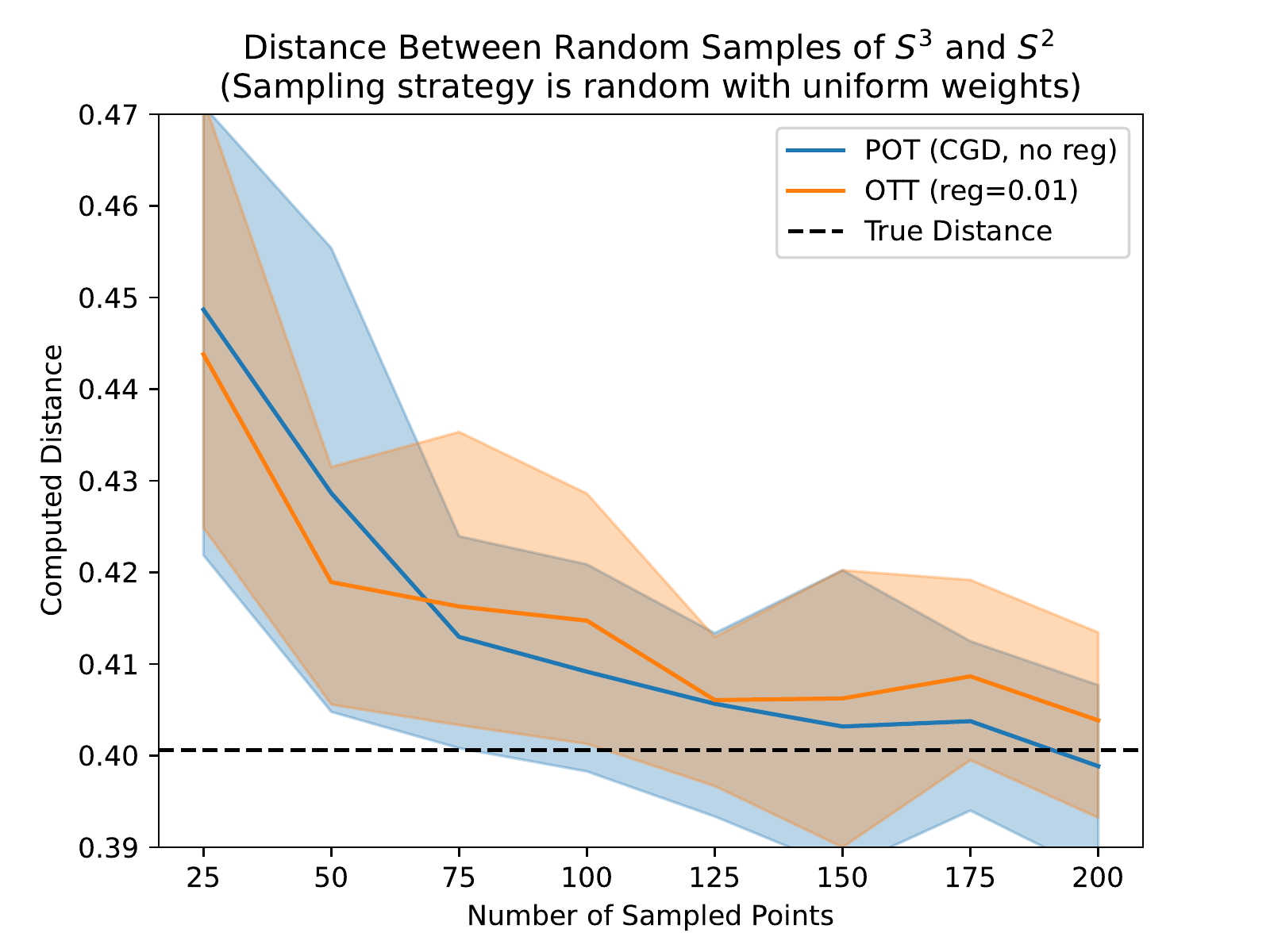} &
        \includegraphics[width=0.45\linewidth]{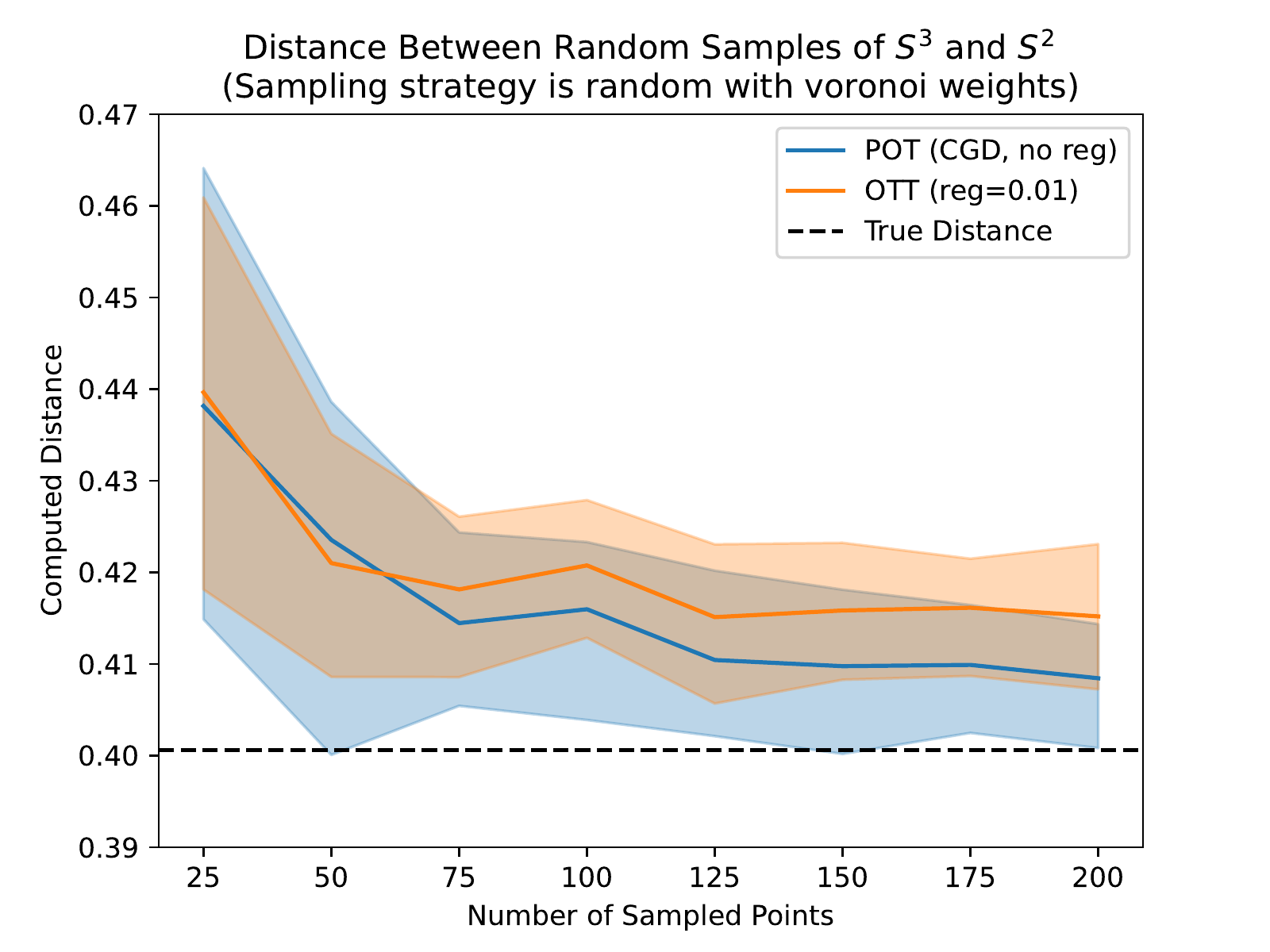} \\
        \includegraphics[width=0.45\linewidth]{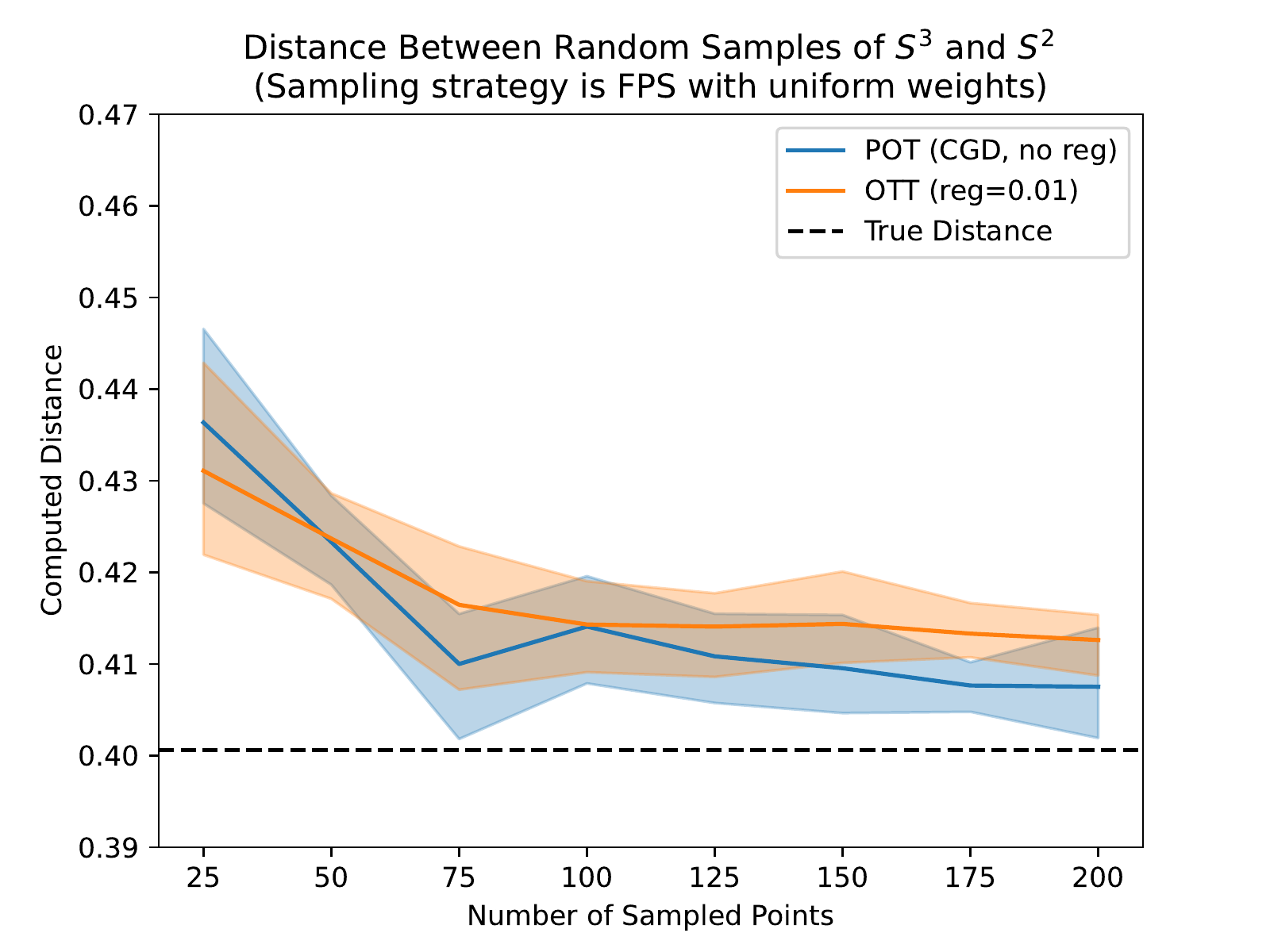} &
        \includegraphics[width=0.45\linewidth]{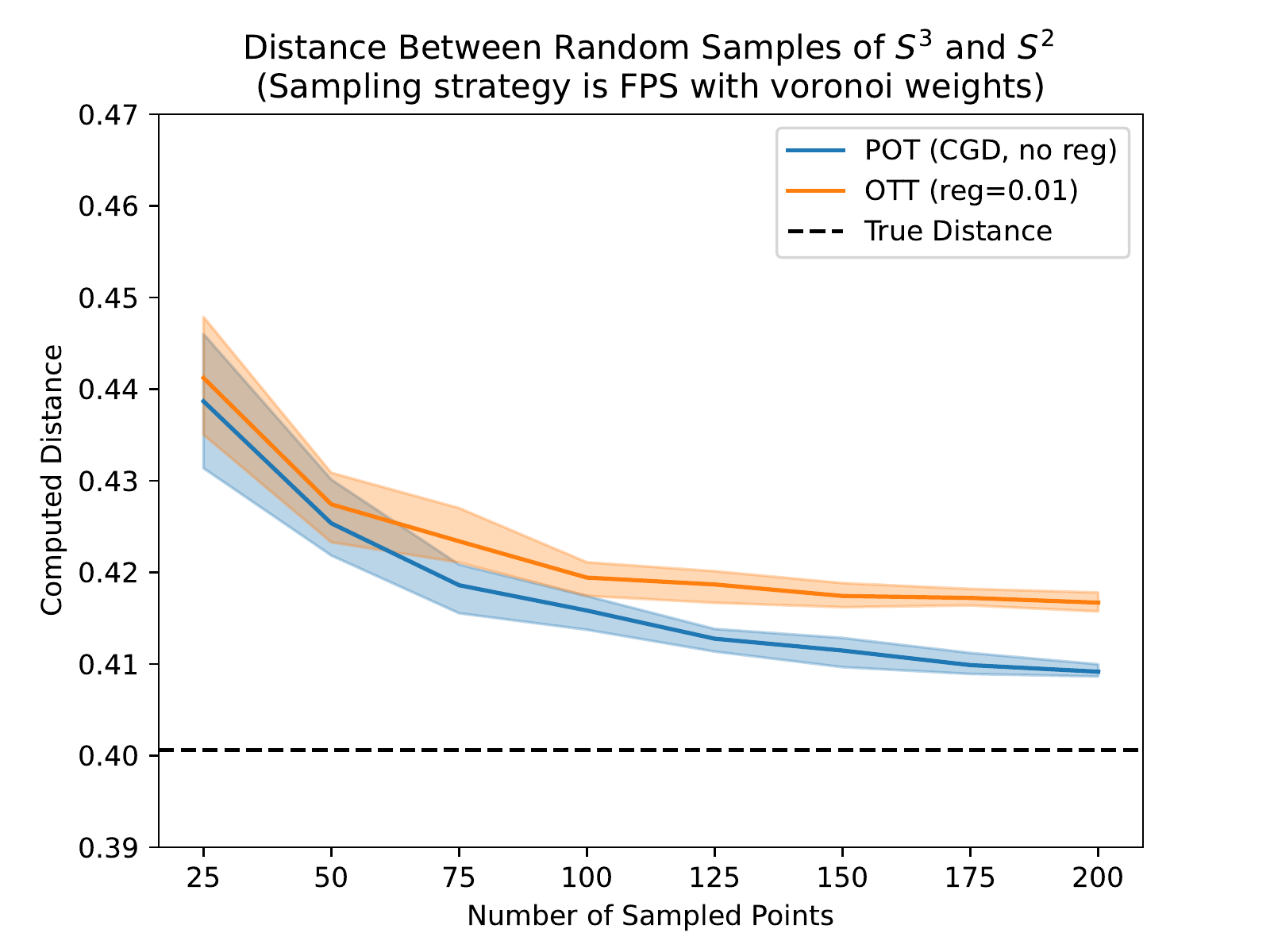} 
    \end{tabular}
    \caption{Estimating the Gromov-Wasserstein distance between $\mathbb{S}^3_E$ and $\mathbb{S}^2_E$. }\label{fig:varied-points-3-2}
\end{figure}

\subsubsection*{Observations.} 

Below it will be convenient to refer to the different combinations of procedures and solvers via the specification of the triple (Sampling, Weights, Solver) where Sampling $\in\{\text{Random, FPS}\}$, Weights $\in\{\text{Uniform, Voronoi}\}$ and Solver $\in\{\text{POT, OTT}\}$.

Figures~\ref{fig:varied-points-2-1},
\ref{fig:varied-points-3-1}, and \ref{fig:varied-points-3-2} suggest the following observations: 

\begin{itemize}

\item In all figures, the general trend is that FPS sampling overperforms Random sampling and that Voronoi weights overperform Uniform weights. These are expected as Voronoi weights are known to be optimal in the sense of quantization of measures \cite[Lemma D.6]{lipman2013conformal}, and FPS sampling is expected to provide a quasi-optimal  sampling of a metric space \cite{gonzalez1985clustering,kritika}.

\item  The combination (FPS, Voronoi, POT) produced the best results in all cases. In the case $\Sp^1$ vs. $\Sp^2$ and in the case $\Sp^1$ vs. $\Sp^3$ it provided excellent results equal to 200 points. In those cases, the Sinkhorn solver (OTT) exhibited some bias, as is expected from the the fact it uses entropic regularization  \cite{rioux2023entropic}. The case of $\Sp^2$ vs. $\Sp^3$ suggests that a dense sampling might be  necessary to approach the true value of the distance. 

\item For the case $\Sp^1$ vs $\Sp^2$  it is remarkable that with  few samples the plots of (Random, Voronoi, POT), (FPS, Uniform, POT), (FPS, Voronoi, POT) are already quite close to the true distance value. This especially the case for (FPS, Voronoi, POT).

\item In the case $\Sp^2$ vs. $\Sp^3$, in all likelihood due to the fact that 200 points is expected to be insufficient to effectively sample $\Sp^3$, most combinations exhibited some degree of error.

\end{itemize}

\section{Conclusions and Perspectives.}\label{sec:disc}

Our results provide one more infinite class of shapes for which we know the exact value of the Gromov-Wasserstein distance. Besides their intrinsic theoretical interest, our results also provide a benchmark against which the standard solvers for the Gromov-Wasserstein distance can be compared. 

\medskip
We now collect a number of questions.

\subsection*{Some questions.}

The fact that we have considered an extra parameter ($q$) in our construction of the $(p,q)$-Gromov-Wasserstein distance,  and the fact that it is known that for $p=q=\infty$, the resulting distance admits a polynomial time algorithm \cite{memoli2021ultrametric}, suggests posing the following question.

\begin{question}\label{q:comp}
    Are there classes $\mathcal{C}\subset \mathcal{G}_w$ of metric measures spaces (or networks, as in \cite{chowdhury2019gromov}) and particular choices of $p$, $q$, such that there exists a polynomial time algorithms for computing  $\dgwpq(X, Y)$ for $X,Y\in\mathcal{C}$?
\end{question}

In light of Theorem \ref{thm:equ-opti} and Remark \ref{rmk:gh_v_gw}, it is natural to ask at what level of generality the equatorial coupling is optimal.

\begin{question}\label{q:eq-map-opt}

In particular, we would like to know:
\begin{itemize}
    \item Is the equatorial coupling optimal for $\dgwpq(\Sp^m_E, \Sp^n_E)$ for other values of $p$ and $q$ beyond $(p,q) = (4,2)$?
    \item Are there values of $p, q$ so that the equatorial coupling is optimal for the $(p,q)$-Gromov-Wasserstein distance between spheres with their geodesic distances (as opposed to their Euclidean distances) $\dgwpq (\Sp^m_G, \Sp^n_G)$?
\end{itemize}
\end{question}

Since  the uniform measure $\mu_m$ (resp.\ $\mu_n$) on $\Sp^m$ (resp.\ $\Sp^n$)  can be obtained as the pushforward of the standard Gaussian measure on $\mathbb{R}^{m+1}$ (resp.\ $\mathbb{R}^{n+1}$)  under the central projection map and since,  by Remark \ref{rem:relation-couplings}, the equatorial coupling can be analogously recovered from $\gamma_{m+1,n+1}^\mathrm{gauss}$, one may wonder:

\begin{question}\label{q:gauss}
   Can one directly invoke \cite[Proposition 4.1]{salmona2021gromov} or Proposition \ref{prop:gaussian},  establishing the optimality of the  coupling $\gamma_{m+1,n+1}^\mathrm{gauss}$ for (a certain variant of) the Gromov-Wasserstein distance between Gaussian measures, to obtain a different proof of Theorem \ref{thm:equ-opti}?
\end{question}

Finally we note that in \cite[Theorems 3.2 and 3.6]{dumont2022existence}, the authors show the existence of a Monge map that induces an optimal coupling for both  the ``inner product" Gromov-Wasserstein distance and the quadratic (i.e. $p=2) $ Gromov-Wasserstein distance between two measures $\nu_m\in\mathcal{P}(\bbR^{m+1})$ and $\nu_n\in \mathcal{P}(\bbR^{n+1})$ with $n\ge m$ and $\nu_n$ absolutely continuous with respect to the Lebesgue measure on $\bbR^{n+1}$. For example in the  setting of \cite[Theorem 3.2]{dumont2022existence}, which is the closest to ours, they find that there exists an optimal coupling that is obtained through a Monge map which is the gradient of a convex function --- in a manner similar to the celebrated Brenier's theorem in optimal transport. As we pointed out on page \pageref{sec:rel-wk}, their results do not  apply in our setting, since the uniform distribution on a sphere $\Sp^d_E$ is singular with respect to the Lebesgue measure in $\bbR^{d+1}$. However, in Theorem~\ref{thm:equ-opti},  we found that an optimal coupling for the uniform distribution between spheres is in fact generated by the Monge map $e_{n,m}$ (see Definition~\ref{def:eq-map} and Claim~\ref{claim:eq-coupling}). Interestingly, $e_{n,m}:\bbS^n\to \bbS^m$ can be written as 
$e_{n,m}=T_0(\pi_{n,m}(x))$ where $T_0:\bbR^{m+1}\to\Sp^m$ is the gradient of the convex function $g:\bbR^{m+1}\to \bbR$ defined as $g(y):=(y_1^2+\dots+y_{m+1}^2)^{1/2}$.

\smallskip
This leads us to the following question:
\begin{question}\label{q:monge}
     Does the conclusion from Theorem 3.2 and 3.6 of \cite{dumont2022existence}, i.e., the existence of Monge maps that minimize the Gromov-Wasserstein distance between two metric measure spaces, holds for more general classes of measures?
\end{question}

\newcommand{\etalchar}[1]{$^{#1}$}

\appendix

\section{Relegated Proofs}\label{sec:proofs}

\subsection{Proof of Theorem \ref{thm:props}}\label{subsec:pfth1-1}

\begin{lem}\label{lem:inc}
Let $(X,d_X,\mu_X), (Y,d_Y,\mu_Y) \in \mathcal{G}_w$ be fixed and let  $\gamma \in \Mcal(\mu_X,\mu_Y)$. Then $${\operatorname{dis}_{p,q}}(\gamma) \leq {\operatorname{dis}_{p',q'}}(\gamma)$$ for all $1 \leq p \leq p' \leq \infty$ and  $1\leq q\leq q' \leq\infty$.
\end{lem}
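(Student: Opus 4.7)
My plan is to split the desired inequality into two separate monotonicity claims and chain them: for $p \le p'$ and $q \le q'$,
\[
\dis_{p,q}(\gamma) \;\le\; \dis_{p,q'}(\gamma) \;\le\; \dis_{p',q'}(\gamma),
\]
handling the monotonicity in $q$ first (with $p$ fixed) and then the monotonicity in $p$ (with $q$ fixed).

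For the first inequality (monotonicity in $q$), I would invoke Proposition~\ref{prop:lmbdaq}, which tells us that $\Lambda_q(a,b)\le \Lambda_{q'}(a,b)$ pointwise on $\bbR_+\times\bbR_+$ whenever $q\le q'$. Applying this with $a=d_X(x,x')$ and $b=d_Y(y,y')$ makes the integrand in the definition of $\dis_{p,q}(\gamma)$ dominated pointwise by the integrand of $\dis_{p,q'}(\gamma)$. Integrating against $\gamma\otimes\gamma$ and taking the $p$-th root preserves the inequality. For the $p=\infty$ case, the same pointwise bound passes through the supremum over $\supp[\gamma]\times\supp[\gamma]$.

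For the second inequality (monotonicity in $p$ at fixed $q$), the key observation is that $\gamma\otimes\gamma$ is a \emph{probability} measure on $(X\times Y)\times(X\times Y)$. So the claim reduces to the classical fact that on a probability space the $L^p$-norm is non-decreasing in $p$: for any non-negative measurable $f$ and $1\le p\le p'<\infty$, Jensen's inequality applied to the convex function $u\mapsto u^{p'/p}$ yields
\[
\Bigl(\int f^p\,d\gamma\otimes\gamma\Bigr)^{1/p}\le \Bigl(\int f^{p'}\,d\gamma\otimes\gamma\Bigr)^{1/p'}.
\]
Taking $f(x,y,x',y'):=\Lambda_q(d_X(x,x'),d_Y(y,y'))$ gives exactly $\dis_{p,q}(\gamma)\le \dis_{p',q}(\gamma)$. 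The case $p'=\infty$ is the standard limit $\|f\|_{L^p}\to\|f\|_{L^\infty}$ on a probability space (or one observes directly that the integrand is bounded above by its essential supremum, so the $p$-th root of the integral is at most that supremum, which agrees with $\dis_{\infty,q}(\gamma)$ since $\supp[\gamma\otimes\gamma]=\supp[\gamma]\times\supp[\gamma]$).

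I do not anticipate any real obstacle — both steps are routine once one notices that $\gamma\otimes\gamma$ is a probability measure (so $L^p$-norms are monotone) and that $\Lambda_q$ is itself monotone in $q$ by Proposition~\ref{prop:lmbdaq}. The only minor care needed is in bookkeeping the endpoint cases $p=\infty$ or $q=\infty$, which are handled by the standard essential-supremum and $\Lambda_\infty$ conventions already set up in the paper.
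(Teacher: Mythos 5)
Your proposal is correct and uses the same two ingredients as the paper ($L^p$-norm monotonicity on the probability space $\gamma\otimes\gamma$, and the pointwise bound $\Lambda_q\le\Lambda_{q'}$ from Proposition~\ref{prop:lmbdaq}); the only cosmetic difference is that you chain via the intermediate quantity $\dis_{p,q'}(\gamma)$ whereas the paper chains via $\dis_{p',q}(\gamma)$. Either order works, so this is essentially the paper's argument.
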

\begin{proof}
First of all, observe that from the fact that $p'\geq p$ it follows that
\[
\dispq(\gamma) = || \Lambda_q(d_X,d_Y) ||_{L^p(\gamma \otimes \gamma)} 
\leq  || \Lambda_q(d_X,d_Y) ||_{L^{p'}(\gamma \otimes \gamma)}
= \disprimepq(\gamma).
\]

Also, note that
\begin{align*}
(\disprimepq(\gamma))^{p'} & = \int_{X \times Y} \int_{X \times Y} \bigg(\Lambda_q(d_X(x,x'),d_Y(y,y'))\bigg)^{p'} \ \gamma(dx \times dy) \ \gamma(dx' \times dy') \\
& \leq \int_{X \times Y} \int_{X \times Y} \bigg(\Lambda_{q'}(d_X(x,x'),d_Y(y,y'))\bigg)^{p'} \ \gamma(dx \times dy) \ \gamma(dx' \times dy')\\
& = (\operatorname{dis}_{p',q'}(\gamma))^{p'}
\end{align*}
where the inequality in the second line holds since $\Lambda_q \leq \Lambda_{q'}$ by Proposition \ref{prop:lmbdaq}. Combining the above, it follows that $\dispq(\gamma) \leq \disprimepq(\gamma)\leq  \disprimepprimeq(\gamma)$ which proves the lemma. \end{proof}

\begin{proof}[Proof of Theorem \ref{thm:props}]
The claim that $\dgwpq\leq \dgwppqq$ for all $1 \leq p \leq p' \leq \infty$ and  $1\leq q\leq q' \leq \infty$ follows immediately from Lemma \ref{lem:inc} above.

We now prove that $\dgwpq(X,Y)=0$ implies that $X\cong Y$. Suppose $\dgwpq(X,Y)=0.$ Then,
\[
0 = \dgwpq(X,Y) \geq \dgwpone(X,Y) = \dgwp(X,Y) \geq 0
\]
where the first inequality follows from monotonicity of $\dgwpq$. By \cite[Theorem 5.1 (a)]{memoli2011gromov}  $\dgwpq(X,Y)=0$ implies that $X \cong Y$. 

Finally, we establish the triangle inequality for $\dgwpq$ as follows. Fix arbitrary $(X,d_X,\mu_X)$, $(Y,d_Y,\mu_Y)$, and $(Z,d_Z,\mu_Z)$ in $\mathcal{G}_w$. Let $\varepsilon>0$ be an arbitrary real number. Then, one can choose couplings $\mu_{XZ}\in\mathcal{M}(\mu_X,\mu_Z)$ and $\mu_{ZY}\in\mathcal{M}(\mu_Z,\mu_Y)$ such that $$\frac{1}{2}|| \Lambda_q(d_X,d_Z) ||_{L^p(\mu_{XZ} \otimes \mu_{XZ})}<\dgwpq(X,Z)+\varepsilon$$ and $$\frac{1}{2}|| \Lambda_q(d_Z,d_Y) ||_{L^p(\mu_{ZY} \otimes \mu_{ZY})}<\dgwpq(Z,Y)+\varepsilon.$$ Next, by the gluing lemma \cite[Lemma 7.6]{villani2021topics}, there exists a probability measure  $\omega$ on $X\times Z\times Y$ such that  $(\pi_{XZ})_\#\omega=\mu_{XZ}$ and $(\pi_{ZY})_\#\omega=\mu_{ZY}$ where $\pi_{XZ}:X\times Z\times Y\rightarrow X\times Z$ and $\pi_{ZY}:X\times Z\times Y\rightarrow Z\times Y$ are the canonical projections. Now, let $\mu_{XY}:=(\pi_{XY})_\#\omega$. Then,
\begin{align*}
    \dgwpq(X,Y)&\leq\frac{1}{2}|| \Lambda_q(d_X,d_Y) ||_{L^p(\mu_{XY} \otimes \mu_{XY})}=\frac{1}{2}|| \Lambda_q(d_X,d_Y) ||_{L^p(\omega \otimes \omega)}\\
    &\leq\frac{1}{2}\left(|| \Lambda_q(d_X,d_Z) ||_{L^p(\omega \otimes \omega)}+|| \Lambda_q(d_Z,d_Y) ||_{L^p(\omega \otimes \omega)}\right)\,\\
    &=\frac{1}{2}|| \Lambda_q(d_X,d_Z) ||_{L^p(\mu_{XZ} \otimes \mu_{XZ})}+\frac{1}{2}|| \Lambda_q(d_Z,d_Y) ||_{L^p(\mu_{ZY} \otimes \mu_{ZY})}\\
    &=\dgwpq(X,Z)+\dgwpq(Z,Y)+2\varepsilon.
\end{align*}
The second inequality follows from an application of the triangle inequality for $\Lambda_q$: $$\Lambda_q(d_X(x,x'),d_Y(y,y'))\le 
\Lambda_q(d_X(x,x'),d_Z(z,z'))
+
\Lambda_q(d_Z(z,z'),d_Y(y,y'))
$$
for $(\omega\otimes\omega)$-a.e.\ $(x,x',y,y',z,z')$. This is possible since $\Lambda_q$ is a metric on $\bbR_+$ by Proposition~\ref{prop:lmbdaq}. Since the choice of $\varepsilon$ is arbitrary, one can establish the required triangle inequality.
\end{proof}

\begin{rmk}\label{rmk:sturm-comp}
Since $\Delta\!\!\!\!\Delta_{p/q,q}$ is a metric on the collection of isomorphism classes of $\mathcal{G}_w$ whenever $p\geq q$  (cf. \cite[Corollary 9.3]{sturm-ss}) and by Remark \ref{rmk:relation} $\dgwpq$ is the $q$-snowflake transform of $\Delta\!\!\!\!\Delta_{p/q,q}$ multiplied by the constant $2^{-1/q}$, one can conclude that $\dgwpq$ is also a metric on the collection of isomorphism classes of $\mathcal{G}_w$ for $p\geq q$. This provides an alternative proof  of a claim in Theorem \ref{thm:props} for the case when $p\geq q$. We note that our statement in Theorem \ref{thm:props} and the proof above do not have this restriction.
\end{rmk}

\subsection{Proof of Proposition \ref{thm:hierarchy} and an Example}\label{subsec:thmhierarchy}

\begin{ex}\label{ex:dlb-ctrex} We now provide an example  showing that 
    $\dlbpq$ is not always a lower bound for $\dgwpq$ in the case where $p < q$. We set $p = 1$, and, for some $\alpha \in [1/2,1]$,   
    \begin{align*}
        X = \{x_1, x_2\}, &\text{ with } d_X(x_1, x_2) = 1 \text{ and } 
        \mu_X(\{x_1\}) = \alpha, \ 
        \mu_X(\{x_2\}) = 1 - \alpha, \\
        Y = \{y_1 , y_2\}, &\text{ with }  d_Y(y_1, y_2) = 1 \text{ and } \mu_Y(\{y_1\}) = 1/2, \ \mu_Y(\{y_2\}) = 1/2.
    \end{align*}
    Then,
    \begin{align*}
        {\rm DLB}_{1, q} (X, Y) &= \left| 
        ({\rm diam}_1 (X))^q - 
        ({\rm diam}_1 (Y))^q \right|^{1/q} 
        =~
        \left| (2 \alpha (1 - \alpha))^q  - (1 / 2)^q \right|^{1/q}.
    \end{align*}
    On the other hand, 
    \begin{align*}
        &~2 \,d_{{\rm GW}_{1, q}} \\
        &\le {\rm dis}_{1, q} (\gamma) \\
        &= 2 (\gamma(x_1,y_1) \cdot \gamma(x_1,y_2) + \gamma(x_2,y_1) \cdot \gamma(x_2,y_2) + \gamma(x_1,y_1) \cdot \gamma(x_2,y_1) + \gamma(x_1,y_2) \cdot \gamma(x_2,y_2)) \\
        &= 2 (\alpha - 1/2)(1 - \alpha) + (\alpha - 1/2) = 4\alpha - 2 \alpha^2 - 3/2
    \end{align*}
    where $\gamma$ is the coupling measure between $\mu_X$ and $\mu_Y$ described in Figure \ref{fig:DLB_cex}. Selecting $q  = 4$, and $\alpha = 3/4$ (in fact this is a counterexample for any $q>2.5$ and any $\alpha\in(\frac{1}{2},1)$), these evaluate to:
    \begin{align*}
    {\rm DLB}_{1, 4}(X, Y) &= \left| (3/8)^4 - (1/2)^4 \right|^{1/4} \approx 0.45 \\
    2\,d_{{\rm GW}_{1, 4}}(X, Y) &\le 3 - 18/16 - 3/2 = 3/8 = 0.375.
    \end{align*}
    Thus, the diameter lower bound does not hold in general when $p < q$.
    
    \begin{figure}
    \begin{tikzpicture}
        \node at (-1.2, -1.5) {$X$};
        \node[draw, circle, inner sep=0pt, minimum size=1.22 * 20pt] (x_1) at (-0.4, -0.8) {\small $x_1$};
        \node[draw, circle, inner sep=0pt, minimum size=0.707 * 20pt] (x_2) at (-0.4, -2.2) {\small $x_2$};

        \node at (1.5, 1) {$Y$};
        \node[draw, circle] (y_1) at (0.8, 0.2) {\small $y_1$};
        \node[draw, circle] (y_2) at (2.2, 0.2) {\small $y_2$};

        \draw (x_1) -- (x_2) node[midway, left] {$1$};
        \draw (y_1) -- (y_2) node[midway, above] {$1$};

        \node (g_ac) at (0.8, -1) {\small $1/2$};
        \node (g_ad) at (2.3, -1) {\small $\alpha - 1/2$};
        \node (g_bc) at (0.8, -2.) {\small $0$};
        \node (g_bd) at (2.2, -2.) {\small $1 - \alpha$};
        
        \draw[dashed] (0.2, -1.5) -- (3.1, -1.5);
        \draw[dashed] (1.5, -0.4) -- (1.5, -2.5);
        \draw (0.2, -0.4) rectangle (3.1, -2.5);
        \node (g) at (3.5, -1.5) {$\gamma$};
    \end{tikzpicture}
    \caption{We have $\mu_X(\{x_1\}) = \alpha,\ \mu_X(\{x_2\}) = 1 - \alpha,\ \mu_Y(\{y_1\}) = \mu_Y(\{y_2\}) = 1/2$ for some $\alpha\in[1/2,1]$. In this scenario, we construct an example where $\dlbpq$ is \emph{not} a lower bound for $\dgwpq$ when $p < q$. The $(p,q)$ distortion under the coupling $\gamma$, illustrated in the square, is used to derive an upper bound on $2\,\dgwpq(X,Y)$. See Example~\ref{ex:dlb-ctrex} for more details.}
\label{fig:DLB_cex}
    \end{figure}
\end{ex}

We will need the following lemmas to prove Proposition \ref{thm:hierarchy}.

\begin{lem}[{\cite[Remark 5.8]{memoli2011gromov}}]\label{lem:ggdd-gldd}
Suppose $X \in \Gcalw$ is given. Let $\lddx(x)$ be the unique probability measure on $\bbR$ associated to the local distributions of distances $h_{X}(x,\cdot)$ and $\gddx$ be the unique probability measure associated to the global distribution of distances $H_{X}$. Then, we have the following:
\[
\int_X \lddx(x)\,\mu_X(dx) = \gddx.
\]
\end{lem}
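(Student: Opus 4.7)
The plan is to establish the identity by testing both measures against an arbitrary Borel set and reducing the claim to a direct application of Fubini's theorem. By the definitions given earlier in the paper, $\lddx(x) = (d_X(x,\cdot))_\#\mu_X$ is the pushforward of $\mu_X$ under the distance-to-$x$ map, while $\gddx = (d_X)_\#(\mu_X\otimes\mu_X)$ is the pushforward of the product measure under the distance function $d_X\colon X\times X \to \bbR_+$. To verify that two Borel probability measures on $\bbR_+$ coincide, it suffices to check that they agree on every Borel subset $B\subseteq\bbR_+$.

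The execution is as follows. Fix a Borel $B\subseteq\bbR_+$. By the definition of pushforward,
\[
\gddx(B) = (\mu_X\otimes\mu_X)\big(\{(x,x')\in X\times X:d_X(x,x')\in B\}\big) = \int_{X\times X}\mathbf{1}_B(d_X(x,x'))\,(\mu_X\otimes\mu_X)(dx\times dx').
\]
Since the integrand is nonnegative and jointly Borel measurable, Fubini's theorem (see \cite{folland1999real}) allows us to iterate the integral:
\[
\gddx(B) = \int_X\!\int_X \mathbf{1}_B(d_X(x,x'))\,\mu_X(dx')\,\mu_X(dx) = \int_X \lddx(x)(B)\,\mu_X(dx),
\]
where the final equality is the definition of $\lddx(x)(B)$. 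The right-hand side is by definition the value of the measure $\int_X \lddx(x)\,\mu_X(dx)$ on $B$. As $B$ was arbitrary, the two measures coincide.

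There is essentially no obstacle in this argument; the only technical point worth flagging is the measurability of $x\mapsto\lddx(x)(B)$ (needed to define the integral of measures on the right-hand side), which is automatic from Fubini applied to the joint measurability of $(x,x')\mapsto \mathbf{1}_B(d_X(x,x'))$. As a sanity check, evaluating both sides on an interval $[0,t]$ recovers exactly equation (5) from the paper, namely $H_X(t) = \int_X h_X(x,t)\,\mu_X(dx)$, so the statement can alternatively be viewed as the measure-level upgrade of that cumulative-distribution identity, with uniqueness of Borel measures on $\bbR_+$ from their cumulative distribution functions supplying an alternate route to the conclusion.
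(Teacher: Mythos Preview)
Your proof is correct. The paper does not supply its own proof of this lemma; it simply cites \cite[Remark 5.8]{memoli2011gromov}, and the CDF-level identity you recover as a sanity check is exactly equation~\eqref{eq:rel-gdd-1} stated earlier in the paper, so your Fubini argument is the natural (and essentially only) route.
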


\begin{lem}\label{lem:optcplmble} Suppose $X,Y\in \Gcalw$ and $p,q\in[1,\infty)$ are given. Then, there is a measure-valued map $(x,y)\mapsto\nu_{x,y}$ from $X\times Y$ to $\mathcal{P}(\bbR_+\times\bbR_+)$ such that 
\begin{enumerate}
    \item[(1)] $(x,y)\mapsto\nu_{x,y}(B)$ is measurable for every Borel set $B\subseteq\bbR_+\times\bbR_+$, 
    \item[(2)] $\nu_{x,y}$ belongs to $\mathcal{M}(dh_X(x),dh_Y(y))$ for each $(x,y)\in X\times Y$, and
    \item[(3)] $\,d^{(\bbR,\Lambda_q)}_{\operatorname{W}p}(dh_X(x),dh_Y(y))=\left(\int_{\bbR_+\times\bbR_+}\big(\Lambda_q(a,b)\big)^p\,\nu_{x,y}(da\times db)\right)^\frac{1}{p}.$
\end{enumerate}
\end{lem}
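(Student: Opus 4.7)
The plan is to construct $\nu_{x,y}$ explicitly as the comonotone (``quantile'') coupling, leveraging the closed-form expression from Remark \ref{rmk:clfrmdWR+Lbdaq}. Specifically, I would define
\[
\nu_{x,y} := \bigl(F_{dh_X(x)}^{-1},\, F_{dh_Y(y)}^{-1}\bigr)_{\#}\, \mathrm{Leb}_{[0,1]},
\]
where $F_{dh_X(x)}^{-1},\, F_{dh_Y(y)}^{-1}:[0,1]\to\bbR_+$ are the generalized inverses from \eqref{eq:def-ginv}. Condition (2) then follows immediately, since the pushforward of $\mathrm{Leb}_{[0,1]}$ under $F_\alpha^{-1}$ is $\alpha$ for any Borel probability $\alpha$ on $\bbR_+$, so $\nu_{x,y}\in\mathcal{M}(dh_X(x),dh_Y(y))$. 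Condition (3), the optimality of $\nu_{x,y}$, is essentially a restatement of Remark \ref{rmk:clfrmdWR+Lbdaq}: substituting the definition of $\nu_{x,y}$ into $\int \Lambda_q(a,b)^p\,\nu_{x,y}(da\times db)$ yields exactly $\int_0^1 \Lambda_q(F_{dh_X(x)}^{-1}(u),F_{dh_Y(y)}^{-1}(u))^p\,du = \bigl(d^{(\bbR_+,\Lambda_q)}_{\operatorname{W}p}(dh_X(x),dh_Y(y))\bigr)^p$.

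The main work is therefore verifying condition (1): joint measurability of $(x,y)\mapsto \nu_{x,y}(B)$ for all Borel $B\subseteq\bbR_+\times\bbR_+$. I would first observe that $(x,t)\mapsto h_X(x,t)=\mu_X\{x'\in X:d_X(x,x')\le t\}$ is jointly Borel measurable on $X\times \bbR_+$, since $(x,x',t)\mapsto \mathbbm{1}[d_X(x,x')\le t]$ is Borel and integration in $x'$ against $\mu_X$ preserves measurability by Fubini--Tonelli (see \cite{folland1999real}). Since $F_{dh_X(x)}(t)=h_X(x,t)$, this yields joint measurability of $(x,u)\mapsto F_{dh_X(x)}^{-1}(u)=\inf\{t:h_X(x,t)>u\}$, using the identity $\{(x,u):F_{dh_X(x)}^{-1}(u)<s\}=\{(x,u):h_X(x,s)>u\}$ which is Borel for every $s$. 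An identical argument handles $F_{dh_Y(y)}^{-1}$.

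Having these joint measurabilities in hand, for any bounded continuous $\varphi:\bbR_+\times\bbR_+\to\bbR$ the map
\[
(x,y)\ \longmapsto\ \int \varphi\, d\nu_{x,y} \;=\; \int_0^1 \varphi\bigl(F_{dh_X(x)}^{-1}(u),F_{dh_Y(y)}^{-1}(u)\bigr)\,du
\]
is Borel measurable on $X\times Y$ by Fubini--Tonelli and dominated convergence. Since $\mathcal{P}(\bbR_+\times\bbR_+)$ with the weak topology is a Polish space whose Borel $\sigma$-algebra is generated by the evaluation maps $\nu\mapsto \int\varphi\,d\nu$ for $\varphi$ bounded continuous, this implies that $(x,y)\mapsto \nu_{x,y}$ is Borel measurable into $\mathcal{P}(\bbR_+\times\bbR_+)$, and in particular $(x,y)\mapsto \nu_{x,y}(B)$ is measurable for every Borel $B$.

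The main (mild) obstacle is the measurability bookkeeping at the level of generalized inverses; this is standard but must be spelled out carefully, as the right-continuous inverse defined in \eqref{eq:def-ginv} requires the use of the strict-inequality set $\{h_X(x,s)>u\}$ rather than the more familiar $\{h_X(x,s)\ge u\}$. Once that is done, the rest is a direct verification. This explicit construction has the additional advantage of bypassing any appeal to abstract measurable selection theorems (e.g.\ Kuratowski--Ryll-Nardzewski), which would otherwise be the natural alternative route.
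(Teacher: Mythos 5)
Your construction takes a genuinely different route from the paper's: you build $\nu_{x,y}$ explicitly as the comonotone (quantile) coupling, whereas the paper never identifies an optimal coupling at all --- it shows the maps $x\mapsto dh_X(x)$ and $y\mapsto dh_Y(y)$ are measurable into $\mathcal{P}(S)$ for the compact set $S$ of realized distances and then invokes the abstract measurable-selection theorem of \cite[Corollary 5.22]{villani2009optimal}. The explicit construction would be more elementary and self-contained if it worked, which you rightly flag as its selling point.

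However, there is a genuine gap. Your verification of condition (3) rests on Remark \ref{rmk:clfrmdWR+Lbdaq}, which is stated and proved only for $1 \le q \le p < \infty$. The reason is structural: after pushing forward by $S_q(a)=a^q$, the problem becomes one-dimensional transport with cost $|s-t|^{p/q}$, and the quantile coupling achieves the infimum precisely when $p/q \ge 1$, i.e., when the cost is convex. The lemma, by contrast, is asserted for all $p,q\in[1,\infty)$ --- and it is used in that generality in the proof of Proposition \ref{thm:hierarchy}. When $q>p$ the cost $|s-t|^{p/q}$ is strictly concave, and the comonotone coupling is then generically \emph{not} optimal (for concave costs the optimal one-dimensional plan tends to keep common mass fixed and reverses monotonicity elsewhere). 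So your $\nu_{x,y}$ satisfies (1) and (2) in all cases but fails (3) for $q>p$. The paper's abstract selection argument is indifferent to this, since it does not need to know what the optimizer looks like. There is also a minor technical slip in the measurability step: for the generalized inverse of \eqref{eq:def-ginv} one has $F^{-1}(u)<s \Leftrightarrow F(s^-)>u$, so the correct identity is $\{(x,u):F_{dh_X(x)}^{-1}(u)<s\}=\{(x,u):h_X(x,s^-)>u\}$, not with $h_X(x,s)$ on the right; this is easily repaired since $h_X(x,s^-)=\lim_{n\to\infty}h_X(x,s-\tfrac1n)$ is a countable pointwise limit of jointly measurable functions. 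The substantive issue is the range restriction: to rescue your approach you would either have to restrict the lemma to $q\le p$ (which conflicts with how it is used) or supply a different explicit optimizer and closed form in the concave regime.
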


While the proof of the above lemma is similar to that of  Claim 1 in \cite[ pg.69]{memoli2021ultrametric}, we provide it in Section~\ref{subsec:pf-lems}  for completeness. 

\begin{proof}[Proof of Proposition \ref{thm:hierarchy}]
First, consider the $p<\infty$ case. We divide the proof into the proofs of each inequality.

\begin{proof}[Proof of $2\,\dgwpq \geq \tlbpq$.]
Fix an arbitrary coupling $\gamma\in\mathcal{M}(\mu_X,\mu_Y)$. Recall that,
$$(\dispq(\gamma))^p=\int_{X\times Y}\int_{X\times Y}\big(\Lambda_q(d_X(x,x'),d_Y(y,y'))\big)^p\,\gamma(dx'\times dy')\,\gamma(dx\times dy).$$
Furthermore, observe that for each $x\in X$ and $y\in Y$, we have
\begin{align*}
    &~\int_{X\times Y}\big(\Lambda_q(d_X(x,x'),d_Y(y,y'))\big)^p\,\gamma(dx'\times dy')
    \\
    =&~\int_{\bbR_+\times\bbR_+}\big(\Lambda_q(a,b)\big)^p\,(d_X(x,\cdot)\times d_Y(y,\cdot))_\# \ \gamma(da\times db)\\
    \geq 
    &~\big(d^{(\bbR,\Lambda_q)}_{\operatorname{W}p}(dh_X(x),dh_Y(y))\big)^p
\end{align*}
where the inequality holds since $dh_X(x)=(d_X(x,\cdot))_\#\mu_X$ and $dh_Y(y)=(d_Y(y,\cdot))_\#\mu_Y$, so $(d_X(x,\cdot), d_Y(y,\cdot))_\# \gamma$ is a coupling between $dh_X(x)$ and $dh_Y(y)$. This implies that $$\dispq(\gamma)\geq\left(\int_{X\times Y}\big(d^{(\bbR,\Lambda_q)}_{\operatorname{W}p}(dh_X(x),dh_Y(y))\big)^p\,\gamma(dx\times dy)\right)^\frac{1}{p}.$$
Since the choice of $\gamma$ is arbitrary, infimizing over $\gamma \in \mathcal{M}(\mu_X, \mu_Y)$ establishes the required inequality. \end{proof} 

\begin{proof}[Proof of $\tlbpq \geq \slbpq$.]
First, consider the case $q<\infty$.

Fix an arbitrary coupling $\gamma\in\mathcal{M}(\mu_X,\mu_Y)$. By Lemma \ref{lem:optcplmble}, there is a measurable choice $(x,y)\mapsto\nu_{x,y}$ such that for each $(x,y)\in X\times Y$, $\nu_{x,y}$ belongs to $\mathcal{M}(dh_X(x),dh_Y(y))$ and $$d^{(\bbR,\Lambda_q)}_{\operatorname{W}p}(dh_X(x),dh_Y(y))=\left(\int_{\bbR_+\times\bbR_+}\big(\Lambda_q(a,b)\big)^p\,\nu_{x,y}(da\times db)\right)^\frac{1}{p}.$$ 
Next, define a measure $\nu$ on $\bbR_+\times\bbR_+$ by:
$$\nu:=\int_{X\times Y}\nu_{x,y}\,\gamma(dx\times dy).$$
Inspection of $\nu$'s marginals shows it is a coupling between $dH_X$ and $dH_Y$.

For each $S\in\Sigma_{\bbR_+}$,
\begin{align*}
    \nu(S\times\bbR_+)&=\int_{X\times Y}\nu_{x,y}(S\times\bbR_+)\,\gamma(dx\times dy)\\
    &=\int_{X\times Y}dh_X(x)(S)\,\gamma(dx\times dy)\\
    &=\int_X dh_X(x)(S)\,\mu_X(dx)=dH_X(S)
\end{align*}
where the last equality holds by Lemma \ref{lem:ggdd-gldd}. A similar argument proves that $\nu(\bbR_+\times S)=dH_Y(S)$ so indeed $\nu\in\mathcal{M}(dH_X,dH_Y)$. Therefore,
\begin{align*}
    &~\int_{X \times Y} \big(d^{(\bbR,\Lambda_q)}_{\operatorname{W}p}(dh_X(x),dh_Y(y))\big)^p \gamma(dx \times dy)\\
    &=\int_{X \times Y}\int_{\bbR_+\times\bbR_+}\big(\Lambda_q(a,b)\big)^p\,\nu_{x,y}(da\times db) \gamma(dx \times dy)\\
    &=\int_{\bbR_+\times\bbR_+}\big(\Lambda_q(a,b)\big)^p\,\nu(da\times db)
    \geq \big(d^{(\bbR,\Lambda_q)}_{\operatorname{W}p}(dH_X,dH_Y)\big)^p.
\end{align*}

The required inequality follows since the choice of $\gamma$ is arbitrary. In order to establish the claim when  $q=\infty$ we employ the  case when  $q<\infty$ and the fact that the following equalities hold:
$$
\operatorname{TLB}_{p,\infty}=\displaystyle\lim_{q\rightarrow\infty}\tlbpq
\text{
and } \operatorname{SLB}_{p,\infty}=\displaystyle\lim_{q\rightarrow\infty}\slbpq.
$$
These  can be verified by observing that $\Lambda_q$ uniformly converges to $\Lambda_\infty$ on the compact set $\{d_X(x,x')\vert\,x,x'\in X\}\cup\{d_Y(y,y')\vert\,y,y'\in Y\}\subset\bbR_+$ as $q$ goes to infinity. 
\end{proof} 

\begin{proof}[Proof of $\slbpq \geq \dlbpqmin$.] We divide the proof into two cases.\\

\noindent \textbf{Case 1. ($p\ge q$):} Observe first that, since $p<\infty$, we have
\begin{align*}
    ~(\slbpq(X,Y))^q&=\left(\int_0^1\big(\Lambda_q(H_X^{-1}(u),H_Y^{-1}(u))\big)^p \ du\right)^{q/p}\\
    &=\left(\int_0^1\big((H_X^{-1}(u))^q-(H_Y^{-1}(u))^q\big)^{p/q}\ du\right)^{q/p}\\
    &=\left(\int_0^1\big(F_{(S_q)_\#dH_X}^{-1}(u)-F_{(S_q)_\#dH_Y}^{-1}(u)\big)^{p/q}\ du\right)^{q/p}\\
    &=d^\bbR_{\operatorname{W}p/q}((S_q)_\#dH_X,(S_q)_\#dH_Y)\\
    &\geq \bigg\vert d^\bbR_{\operatorname{W}p/q}((S_q)_\#dH_X,\delta_0)-d^\bbR_{\operatorname{W}p/q}((S_q)_\#dH_Y,\delta_0)\bigg\vert
\end{align*}
where $\delta_0$ is the Dirac measure at zero. Note that the first equality follows from Remark \ref{rmk:clfrmdWR+Lbdaq} and the last inequality follows from the triangle inequality of $d^\bbR_{\operatorname{W}p/q}$. Next, we obtain via Example \ref{ex:wass-delta} and Remark \ref{rem:p-diameter and moment} that
$$d^\bbR_{\operatorname{W}p/q}((S_q)_\#dH_X,\delta_0)=\left(\int_X d^p_X(x,x')\,\mu_X(dx)\,\mu_X(dx')\right)^{q/p}=(\diamp(X))^q.$$
This establishes the inequality $\slbpq\ge \dlbpq$ when $p\ge q$ and $p<\infty$. \\ 

\noindent \textbf{Case 2. ($p\le q$):} Note that $\dlbpqmin=\operatorname{DLB}_{p,p}$ in this case. Also, it is easy to verify that $\slbpq\ge\operatorname{SLB}_{p,p}$ since $\Lambda_p\le\Lambda_q$. Moreover, $\operatorname{SLB}_{p,p}\ge\operatorname{DLB}_{p,p}$ by the previous Case 1. Hence, we achieve the inequality $\slbpq\ge\dlbpp=\dlbpqmin$. \end{proof}

Note that this finishes the proof the hierarchy in the case where $p<\infty$. More precisely, thus far we have proved that
\begin{equation}\label{eq:p-fin}
2\,\dgwpq\ge \tlbpq \ge \slbpq \ge \dlbpqmin
\quad
\text{for }
p\in[1,\infty)
\text{ and }
q\in[1,\infty].
\end{equation}

Lastly, in order to prove the $p=\infty$ case, we proceed as follows. Unless otherwise specified, we consider $q\in [1,\infty]$.

\begin{proof}[Proof of $2d_{\operatorname{GW}_{\infty,q}}\geq \operatorname{TLB}_{\infty,q}$.] Note that by Theorem~\ref{thm:props} 
$$
2d_{\operatorname{GW}_{\infty,q}}\ge 2\limsup_{p\to\infty}\dgwpq.
$$
Also, it is easy to verify that $\operatorname{TLB}_{\infty,q}\geq\displaystyle\limsup_{p\rightarrow\infty}\tlbpq$ since $d^{(\bbR,\Lambda_q)}_{\operatorname{W}p}\le d^{(\bbR,\Lambda_q)}_{\operatorname{W}p'}$ whenever $1\le p\le p'\le\infty$. Therefore, $2d_{\operatorname{GW}_{\infty,q}}\geq \operatorname{TLB}_{\infty,q}$ is achieved from the $p<\infty$ case.
\end{proof} 

\begin{proof}[Proof of $\operatorname{TLB}_{\infty,q}\ge\operatorname{SLB}_{\infty,q}$.]
This is straightforward since $\operatorname{TLB}_{\infty,q}\geq\displaystyle\limsup_{p\rightarrow\infty}\tlbpq$, $\operatorname{SLB}_{\infty,q}=\displaystyle\lim_{p\rightarrow\infty}\slbpq$ and we proved that $\tlbpq\ge\slbpq$ in the $p<\infty$ case.
\end{proof}

\begin{proof}[Proof of $\operatorname{SLB}_{\infty,q}\ge\operatorname{DLB}_{\infty,\infty\wedge q}$.]
If $q<\infty$, then it is easy to verify the claim from the facts $\operatorname{SLB}_{\infty,q}=\displaystyle\lim_{p\rightarrow\infty}\slbpq$, $\operatorname{DLB}_{\infty,q}=\displaystyle\lim_{p\rightarrow\infty}\dlbpq$ and the $p<\infty$ case. Finally, if $q=\infty$, one can use the facts $\operatorname{SLB}_{\infty,\infty}=\displaystyle\lim_{q\rightarrow\infty}\operatorname{SLB}_{\infty,q}$, $\operatorname{DLB}_{\infty,\infty}=\displaystyle\lim_{q\rightarrow\infty}\operatorname{DLB}_{\infty,q}$, and the previous case when $p=\infty$, $q<\infty$. This completes the proof. \end{proof} 

By the last three cases, we have thus proved that
\begin{equation}\label{eq:p-infin}
2d_{\operatorname{GW}_{\infty,q}}
\geq 
\operatorname{TLB}_{\infty,q}
\ge 
\operatorname{SLB}_{\infty,q}
\ge
\operatorname{DLB}_{\infty,\infty\wedge q}
\quad
\text{for }
q\in[1,\infty].
\end{equation}
Equations~\eqref{eq:p-fin} and \eqref{eq:p-infin} establish the theorem.
\end{proof}

\subsection{Proofs of Lemmas}\label{subsec:pf-lems}

\begin{proof}[Proof of Lemma~\ref{lem:GW-eu-eq}]
By equations~\eqref{eq:eu-dis},\eqref{eq:eu-dis-2} and \eqref{eq:defJmu} we have
$$
\dis_{4,2}^4(\gamma_{m,n})
=\dfrac{4}{n+1}+\dfrac{4}{m+1}
-8\int_{\Sp^n\times\Sp^m}\int_{\Sp^n\times\Sp^m}\langle x, x'\rangle\langle y, y'\rangle \ \gamma_{m,n}(dx \times dy) \ 
\gamma_{m,n}(dx' \times dy').
$$
We then compute
\begin{align*}
&\int_{\Sp^n\times\Sp^m}\int_{\Sp^n\times\Sp^m}\langle x, x'\rangle\langle y, y'\rangle \ \gamma_{m,n}(dx \times dy) \ \gamma_{m,n}(dx' \times dy')\\
    &=\iint_{\Sp^n\times\Sp^n}
    \langle e_{n,m}(y), e_{n,m}(y')\rangle 
    \langle y, y'\rangle
    \ \mu_n(dy) \ \mu_n(dy')\\
    &=\iint_{\Sp^n\times\Sp^n}
    \sum_{i=1}^{n+1}
    y_iy_i'\sum_{j=1}^{m+1}\frac{y_j}{\Vert y_A\Vert}\frac{y_j'}{\Vert y_A'\Vert} \ \mu_n(dy) \ \mu_n(dy')\\
    &=\sum_{i=1}^{n+1}\sum_{j=1}^{m+1}\left(\int_{\Sp^m}\frac{y_iy_j}{\Vert y_A\Vert} \ \mu_n(dy)\right)^2\\
    &=\sum_{j=1}^{m+1}\left(\int_{\Sp^n}
    \frac{y_j^2}{\Vert y_A\Vert} \ \mu_n(dy)\right)^2\\
    &=(m+1)\left(\int_{\Sp^n}\frac{y_1^2}{\Vert y_A\Vert} \ \mu_n(dy)\right)^2.
\end{align*}

Here, note that the fourth equality holds because $\int_{\Sp^n}\frac{y_iy_j}{\Vert y_A\Vert} \ \mu_n(dy)=0$ whenever $i\neq j$ because $\mu_m=(N_m)_\#\eta_{m+1}$ where $N_m:\bbR^{m+1}\rightarrow\Sp^m$ is the map such that $(y_1,\dots,y_{m+1})\mapsto\frac{1}{\sqrt{\sum_{i=1}^{m+1}y_i^2}}(y_1,\dots,y_{m+1})$ and $\eta_{m+1}$ is the standard Gaussian measure on $\bbR^{m+1}$.

Also,
\begin{align*}
    \int_{\Sp^n}\frac{y_1^2}{\Vert y_A\Vert} \ \mu_n(dy)
    =\frac{1}{m+1}\int_{\Sp^n}\frac{\sum_{j=1}^{m+1} y_j^2}{\Vert y_A\Vert} \ \mu_n(dy)=\frac{1}{m+1}\int_{\Sp^n}\Vert y_A\Vert \ \mu_n(dy).
\end{align*}
It remains to calculate the expectation of $\|y_A\|$, which follows from the identical calculation done in the proof of Theorem~\ref{thm:equ-opti}. In particular, we appeal to a characterization of $\mu_n$ in terms of standard Gaussian random variables $Z_1,\dots,Z_{n+1}$ in order to write that
$$
\|y_A\|^2\sim {\rm Beta}\left(\dfrac{m+1}{2},\dfrac{n-m}{2}\right).
$$
And hence
\begin{align*}
\int_{\Sp^n}\Vert y_A\Vert \ \mu_n(dy)
=&~
\dfrac{1}{\beta\left(\tfrac{m+1}{2},\,\tfrac{n-m}{2}\right)}
\int_0^1
\sqrt{t}\cdot
t^{(m+1)/2-1}
(1-t)^{(n-m)/2-1} \
du\\
=&~
\dfrac{\beta\left(\tfrac{m+2}{2},\,\tfrac{n-m}{2}\right)}
{\beta\left(\tfrac{m+1}{2},\,\tfrac{n-m}{2}\right)} 
=~\dfrac{\Gamma\left(\tfrac{m+2}{2}\right)
\Gamma\left(\tfrac{n+1}{2}\right)}{
\Gamma\left(\tfrac{m+1}{2}\right)
\Gamma\left(\tfrac{n+2}{2}\right)
}.
\end{align*}
This finishes the proof.
\end{proof}

\begin{proof}[Proof of Lemma~\ref{lem:optcplmble}] First, let $S:=\{d_X(x,x')\vert\,x,x'\in X\}\cup\{d_Y(y,y')\vert\,y,y'\in Y\}\subset\bbR_+$. Since both $X$ and $Y$ are compact, $S$ is also compact. Also, it is easy to verify that all $\Lambda_r$ (for $r\in [1,\infty)$) induce the same topology and thus the same Borel sets on $S$. Therefore all $d^{(\bbR,\Lambda_r)}_{\operatorname{W}p}$ (for $r\in [1,\infty)$) metrize the  weak topology on $\mathcal{P}(S)$. By \cite[Remark 1]{memoli2022distance}, the following two maps are continuous with respect to the weak topology and thus measurable:
$$\Phi_1:X\rightarrow\mathcal{P}(S),\,x\mapsto dh_X(x)$$
and
$$\Phi_2:Y\rightarrow\mathcal{P}(S),\,y\mapsto dh_Y(y).$$
Since $S$ is a compact space, the space $(\mathcal{P}(S),d^{(\bbR,\Lambda_q)}_{\operatorname{W}p})$ is separable \cite[Theorem 6.18]{villani2009optimal}. This yields that $\Sigma_{\mathcal{P}(S)\times\mathcal{P}(S)}=\Sigma_{\mathcal{P}(S)}\otimes\Sigma_{\mathcal{P}(S)}$ \cite[Proposition 1.5]{folland1999real}. Hence, the product $\Phi$ of $\Phi_1$ and $\Phi_2$, defined by
$$\Phi:X\times Y\rightarrow\mathcal{P}(S)\times\mathcal{P}(S),\,(x,y)\mapsto(dh_X(x),dh_Y(y))$$
is measurable \cite[Proposition 2.4]{folland1999real}. Since $\Phi$ is measurable, a direct application of \cite[Corollary 5.22]{villani2009optimal} gives the claim.
\end{proof}

\section{Calculations}\label{app:calc}

\subsection{Lower Bounds for $\dgwft$ between Spheres  with the Geodesic distance.} \label{sec:sphere-lb-geo}

In preparation for the diameter lower bounds, we first compute the $4$-diameters of $\Spg^m$ for $m=0,1,2$ using the formula for the global distance distribution given in Example~\ref{ex:gdd-geo}.

\begin{ex}(The global distance distributions of $\Spg^0$, $\Spg^1$ and $\Spg^2$) \label{ex:gdd-012}
    $\Spg^0$ consists of two points which are at distance  $\pi$  apart and so $ H_{\Spg^0}(t) = \mu_0 \otimes \mu_0 \{ (x,x') \in \Spg^0\times \Spg^0 | d_0(x,x') \le t  \} $ is
\[  H_{\Spg^0}(t)=
    \begin{cases} 
      \frac{1}{2} & 0 \le  t <\pi \\
      1 & t =  \pi.  
   \end{cases}
\] 
By Example~\ref{ex:gdd-geo}, the global distance distributions for $\Spg^1$ and $\Spg^2$ are 
    \[
         H_{\Spg^1}(t) = \frac{t}{\pi}, \qquad
         H_{\Spg^2}(t) = \frac{(1-\cos t)}{2}
         \text{ for }
         t\in[0,\pi]
         .    
    \]
Consequently, the generalized inverses (see equation~\eqref{eq:def-ginv}) are 
\[  H_{\Spg^0}^{-1}(u)= 
    \begin{cases} 
      0 & 0 \le  u \le \frac{1}{2} \\
      \pi & \frac{1}{2} <  u \le 1
   \end{cases}
\]
while
\[H_{\Spg^1}^{-1}(u)=u\pi 
\quad
\text{and}
\quad 
H_{\Spg^2}^{-1}(u)=\arccos(1-2u)
\quad 
\text{for}
\quad 
u\in [0,1]
.
\]
\end{ex}

\begin{ex}($\mathrm{DLB}_{4,2}(\Spg^0,\Spg^1)$ and $\mathrm{DLB}_{4,2}(\Spg^1,\Spg^2)$)\label{ex:dlb-geo} By Example~\ref{ex:gdd-012} the $4$-diameters of $\Spg^0$, $\Spg^1$ and $\Spg^2$ are
$$    \diam_4(\Spg^0)
=\bigg(\int_0^1\left(H^{-1}_{\Spg^0}(u)\right)^{4}du\bigg)^{1/4} =\bigg(\int_{1/2}^1\pi^{4}du\bigg)^{1/4} = \frac{\pi}{2^{1/4}}.
$$

$$    \diam_4(\Spg^1) =
    \bigg(\int_0^1\left(H^{-1}_{\Spg^1}(u)\right)^{4}du\bigg)^{1/4} 
    =
    \bigg(\int_0^1(u\pi)^{4}du\bigg)^{1/4} = \frac{\pi}{5^{1/4}}
$$
$$    \diam_4(\Spg^2) =
    \bigg(\int_0^1\left(H^{-1}_{\Spg^2}(u)\right)^{4}du\bigg)^{1/4} 
    =
    \bigg(\int_0^1(\arccos(1-2u))^{4}du\bigg)^{1/4} = \bigg(24- 6\pi^2 +\frac{\pi^4}{2}\bigg)^{1/4}. 
$$
Hence, by the definition of $\mathrm{DLB}_{4,2}$ we have 
    \[\mathrm{DLB}_{4,2}(\Spg^0,\Spg^1)= \bigg|\bigg(\frac{\pi^4}{2} \bigg)^{2} -\bigg(\frac{\pi^4}{5} \bigg)^{2}\bigg|^{1/2} = \pi\bigg(\frac{1}{\sqrt{2}}-\frac{1}{\sqrt{5}}\bigg)^{1/2} \approx 1.602.\]
and
    \[\mathrm{DLB}_{4,2}(\Spg^1,\Spg^2) = \bigg| \bigg(\frac{\pi}{5^{1/4}} \bigg)^{2} - \bigg(24- 6\pi^2 +\frac{\pi^4}{2}\bigg)^{2/4} \bigg|^{1/2} \approx 0.861. \]
\end{ex}

\begin{ex}($\mathrm{SLB}_{4,2}(\Spg^0,\Spg^1)$ and $\mathrm{SLB}_{4,2}(\Spg^1,\Spg^2)$)\label{ex:slb-geo} By Example~\ref{ex:gdd-012} and the definition of $\mathrm{SLB}_{4,2}$ we obtain
\begin{align*}
    \mathrm{SLB}_{4,2}(\Spg^0,\Spg^1) 
    =& \bigg(\int_0^1 |(H^{-1}_{\Spg^0}(u))^2 - (H^{-1}_{\Spg^1}(u))^2|^2 \ du\bigg)^{1/4} \\
    =&~ \bigg(\int_0^{\frac{1}{2}} |u^2\pi^2|^2 \ du + \int_{\frac{1}{2}}^1 |\pi^2-u^2\pi^2 |^2 \ du\bigg)^{1/4} 
    = \pi\left(\frac{1}{2}+\frac{1}{5}-\frac{7}{12}\right)^{1/4}\approx 1.836,
\end{align*}
and similarly
\begin{align*}
        \mathrm{SLB}_{4,2}(\Spg^1,\Spg^2) 
        =&~  
        \bigg(\int_0^1 |(H^{-1}_{\Spg^1}(u))^2 - (H^{-1}_{\Spg^2}(u))^2|^2 \ du\bigg)^{1/4}\\ 
        =&~
        \bigg(\int_0^1 |u^2\pi^2 - (\arccos(1-2u))^2|^2 \ du\bigg)^{1/4} \approx 0.931.
\end{align*}
\end{ex}

 \subsection{Lower Bounds for $\dgwft$ between Spheres  with the Euclidean metric.}\label{sec:sphere-lb-euc}
     \begin{ex}(The global distance distributions of $\Spe^0$, $\Spe^1$ and $\Spe^2$)\label{ex:gdd-euc-012}
          $\Spe^0$ consists of two points which are at distance  $2$  apart.
        \[  H_{\Spe^0}(t)=
        \begin{cases} 
        \frac{1}{2} & 0 \le  t <2\\
        1 & t = 2 
        \end{cases}
         \] 
    Consequently, 
    \[  H_{\Spe^0}^{-1}(u)= 
    \begin{cases} 
      0 & 0 \le  u \le \frac{1}{2} \\
      2 & \frac{1}{2} <  u \le 1
   \end{cases}
    \]
    Similarly, the global distance distributions for $\Spg^1$ and $\Spg^2$ are
    \[
         H_{\Spe^1}(t) = \frac{2}{\pi}\arcsin{\frac{t}{2}}, \qquad
         H_{\Spe^2}(t) = \frac{t^2}{4}   
         \quad
         \text{for }
         t\in[0,2].
         \]
    Thus, for $u\in[0,1]$
    \[H_{\Spe^1}^{-1}(u)=2\sin{\left(\frac{u\pi}{2}\right)}
    \quad
    \text{and}
    \quad 
    H_{\Spe^2}^{-1}(u)=2\sqrt{u}
    \]
    
    \end{ex}
     
     \begin{ex}($\mathrm{DLB}_{4,2}(\Spe^0,\Spe^1)$ and $\mathrm{DLB}_{4,2}(\Spe^1,\Spe^2)$)\label{ex:dlb-euc-012}
    By Example~\ref{ex:gdd-euc-012}, the $4$-diameters of $\Spe^0$, $\Spe^1$ and $\Spe^2$ are
    $$
    \diam_4(\Spe^0)
    =\bigg(\int_0^1\left(H^{-1}_{\Spe^0}(u)\right)^{4}du\bigg)^{1/4} =\bigg(\int_{1/2}^12^{4}du\bigg)^{1/4} = 2^{3/4}.
    $$   
    $$
    \diam_4(\Spe^1) =2\bigg(\int_0^1 \sin^4{\left(\frac{u\pi}{2}\right)} du\bigg)^{1/4} = 2\left(\frac{3}{8}\right)^{1/4}.
    $$  
    $$
    \diam_4(\Spe^2) =2\bigg(\int_0^1(\sqrt{u})^{4}du\bigg)^{1/4} = \frac{2}{3^{1/4}}. 
    $$
    Hence by the definition of $\dlbft$ we have
    \[\mathrm{DLB}_{4,2}(\Spe^0,\Spe^1)= \bigg|2^{3/2} -2^2\left(\frac{3}{8}\right)^{2/4}\bigg|^{1/2}  \approx 0.616.\]
    and
    \[\mathrm{DLB}_{4,2}(\Spe^1,\Spe^2) = \bigg| 2^2\left(\frac{3}{8}\right)^{2/4} - \left(\frac{2}{3^{1/4}}\right)^2 \bigg|^{1/2} \approx 0.374. 
    \]
    \end{ex}
  
    \begin{ex}($\mathrm{SLB}_{4,2}(\Spe^0,\Spe^1)$ and $\mathrm{SLB}_{4,2}(\Spe^1,\Spe^2)$)\label{ex:slb-euc-012}
    By Example~\ref{ex:gdd-euc-012} and the definition of $\slbft$ we obtain
    \begin{align*}
    \mathrm{SLB}_{4,2}(\Spe^0,\Spe^1) 
    =& \bigg(\int_0^1 |(H^{-1}_{\Spe^0}(u))^2 - (H^{-1}_{\Spe^1}(u))^2|^2 \ du\bigg)^{1/4}\\ 
    =& \bigg(\int_0^{\frac{1}{2}} |H^{-1}_{\Spe^1}(u)|^4 \ ds + \int_{\frac{1}{2}}^1 |2^2-(H^{-1}_{\Spe^1}(u))^2 |^2 \ du\bigg)^{1/4} \\
    =& \bigg(\int_0^{\frac{1}{2}} \left|2^2\sin^2{\left(\frac{u\pi}{2}\right)}\right|^2 \ du + \int_{\frac{1}{2}}^1 \left|2^2-2^2\sin^2{\left(\frac{u\pi}{2}\right)} \right|^2 \ du\bigg)^{1/4} \\ \approx&~ 0.976,
    \end{align*}
    and similarly
   \[
        \mathrm{SLB}_{4,2}(\Spe^1,\Spe^2) =  \bigg(\int_0^1 \left|2^2\sin^2{\left(\frac{u\pi}{2}\right)} - 2^2\,u\right|^2 \ du\bigg)^{1/4} \approx 0.549.
  \]
\end{ex}

\subsection{Distortion $\disft$ under the Equatorial coupling} 

\begin{ex}($\dis_{4,2}(\gamma_{0,1},\Spg^0,\Spg^1)$)\label{ex:dis42g01-app} By Remark~\ref{rmk:max-dis42} and Example~\ref{ex:dlb-geo}, we have
\begin{align*}\label{eq:dis42g01-app}
    &\big(\dis_{4,2}(\gamma_{0,1},\Spg^0,\Spg^1)\big)^4\\
    =&~
    (\diam_4(\Spg^0))^4+(\diam_4(\Spg^1))^4
    -2\int 
    \big(d_0(e_{1,0}(y),e_{1,0}(y'))\big)^2\,
    \big(d_1(y,y')\big)^2\,
    \mu_1(dy)\mu_1(dy')\\
    =&~
    \dfrac{\pi^4}{2}+\dfrac{\pi^4}{5}
    -2\int_{\Sp^1}\int_{\Sp^1}
    (d_0(e_{1,0}(y),e_{1,0}(y')))^2(d_1(y,y'))^2
    \mu_1(dy)\mu_1(dy').\numberthis
\end{align*}
    To compute the integral on the right hand side, we use polar coordinates. We write $y=(\cos\theta,\sin\theta)$ and $y'=(\cos\theta',\sin\theta')$, where $\theta,\theta'\in[0,2\pi]$.  Note also that with this parametrization and by the definition of the $e_{0,1}$ map, we can write
$$
e_{1,0}(y)={\rm sign}(\cos\theta)\,;\,
e_{1,0}(y')={\rm sign}(\cos\theta')
$$
Hence by definition of $d_0$, 
$$
d_0(e_{1,0}(y),e_{1,0}(y'))
=\pi\mathbbm{1}(\cos\theta\cdot \cos\theta'<0).
$$
Moreover,
$$
d_1(y,y')
=\arccos(\cos\theta\cos\theta'+\sin\theta\sin\theta')
=\arccos(\cos(\theta-\theta'))
.
$$
We then have 
\begin{align*}
&\int_{\Sp^1}\int_{\Sp^1}
    (d_0(e_{1,0}(y),e_{1,0}(y')))^2(d_1(y,y'))^2
    \mu_1(dy)\mu_1(dy')\\
=&~   
\int_0^{2\pi} 
\int_0^{2\pi}
(\pi\mathbbm{1}(\cos\theta\cdot \cos\theta'<0))^2
(\arccos(\cos(\theta-\theta')))^2
\times
\left(\dfrac{1}{2\pi}\right)^2
d\theta d\theta'\\
=&~
\dfrac{1}{4}
\left(
\int_0^{\pi/2}
\int_{\pi/2}^{3\pi/2}
(\arccos(\cos(\theta'-\theta)))^2
d\theta d\theta'
+
\int_{3\pi/2}^{2\pi}
\int_{\pi/2}^{3\pi/2}
(\arccos(\cos(\theta-\theta')))^2
d\theta d\theta'
\right)\times 2
\\
=&~\dfrac{1}{2}
\left(
\int_0^{\pi/2}
\int_{\pi/2-\theta}^{3\pi/2-\theta}
(\arccos(\cos(t)))^2
dt d\theta
+
\int_{3\pi/2}^{2\pi}
\int_{\theta-3\pi/2}^{\theta-\pi/2}
(\arccos(\cos(t)))^2
dt d\theta
\right).
\end{align*}
The first integral becomes
\begin{align*}
\int_0^{\pi/2}
\int_{\pi/2-\theta}^{3\pi/2-\theta}
(\arccos(\cos(t)))^2
dt d\theta
=&~
\int_0^{\pi/2}
\left[
\int_{\pi/2-\theta}^{\pi}
t^2
dt 
+
\int_{\pi}^{3\pi/2-\theta}
(2\pi-t)^2
dt 
\right]
d\theta\\
=&~
\int_0^{\pi/2}
\left[
\left(
\dfrac{\pi^3}{3}
-
\dfrac{(\pi/2-\theta)^3}{3}
\right)
+
\left(
\dfrac{\pi^3}{3}
-
\dfrac{(\theta+\pi/2)^3}{3}
\right)
\right]
d\theta\\
=&~
\dfrac{\pi^4}{3}
-
\int_0^{\pi}\dfrac{u^3}{3}du
=
\dfrac{\pi^4}{3}
-
\dfrac{\pi^4}{12}
=\dfrac{\pi^4}{4}.
\end{align*}
Similarly, the second integral also evaluates to 
\begin{align*}
\int_{3\pi/2}^{2\pi}
\int_{\theta-3\pi/2}^{\theta-\pi/2}
(\arccos(\cos(t)))^2
dt d\theta
=&~
\int_{3\pi/2}^{2\pi}
\left[
\int_{\theta-3\pi/2}^{\pi}
t^2
dt
+
\int_{\pi}^{\theta-\pi/2}
(2\pi-t)^2
dt 
\right]
d\theta
=\dfrac{\pi^4}{4}.
\end{align*}
Thus, 
$$
\int_{\Sp^1}\int_{\Sp^1}
    (d_0(e_{1,0}(y),e_{1,0}(y')))^2(d_1(y,y'))^2
    \mu_1(dy)\mu_1(dy')
=\dfrac{\pi^4}{4}.    
$$
The rest of the calculations are given in Example~\ref{ex:dis42g01}.
\end{ex}

\section{Another experiment: Varying dimensions experiment}\label{app:exps}
In this experiment, we fixed the number of samples taken at $100$ and varied
the dimensions of the two spheres between 1 and 7. The subsampling mehtod was chosen to be FPS and the Weights were those produced by the Voronoi method. Finally, we fixed the solver to the CGD solver from POT.

Using the results of ten trials  ($n_\textrm{trials}=10$) for fixed sphere dimensions, $m$ and $n$,
$\hat{d}^i_{m, n}$, where $i = 1, \ldots, n_\textrm{trials}$, we estimated the
true distance via the average over trials,
\[ d_{m, n} \approx \hat{d}_{m, n} :=
    \frac{1}{n_\textrm{trials}}\sum_{i=1}^{n_\textrm{trials}} \hat{d}^i_{m, n}.
\]

We then recorded the relative error of this estimator: $\textrm{relative
error}_{m, n} := \frac{\hat{d}_{m, n} - d_{m, n}}{d_{m, n}}$ in the corresponding entry of the heatmap shown in
Figure~\ref{fig:dimension-heatmaps-pot}.  We observe a dramatic decrease in accuracy as the dimensions of both spheres increase which of course one would expect to reduce by using a larger number of points.

\begin{figure}
    \centering
    \begin{tabular}{cc}
        \includegraphics[width=0.65\linewidth]{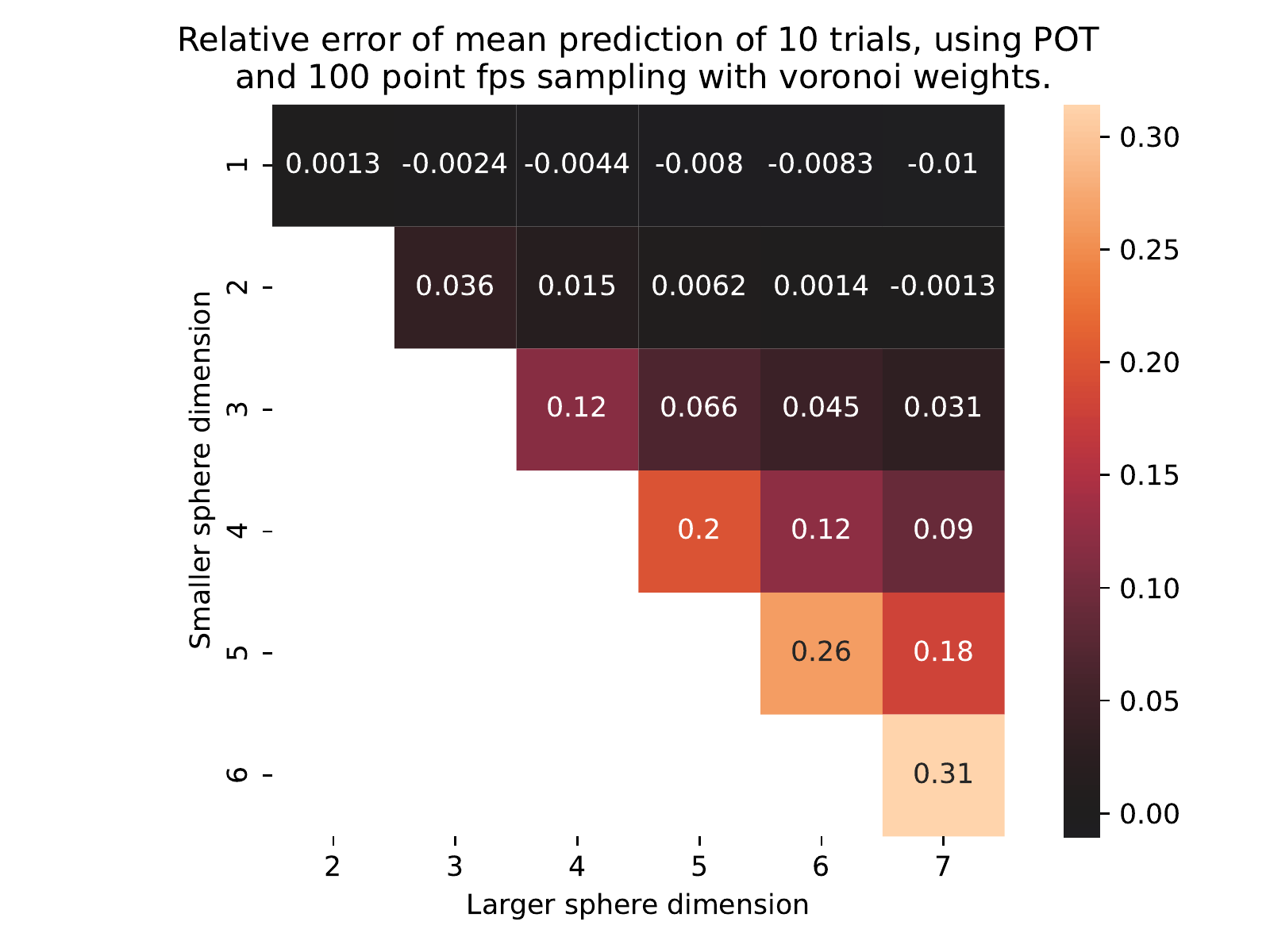} 
    \end{tabular}
    \caption{Relative errors of computed and true differences using the CGD solver from POT, FPS as the sampling procedure and Voronoi weights.}\label{fig:dimension-heatmaps-pot}
\end{figure}

\end{document}